\theoremstyle{theorem}
\newtheorem{thm}{Theorem}
\newtheorem{lem}{Lemma}
\theoremstyle{definition}
\begin{document}
	
	\title{ \large{Time Optimal Control Studies on COVID-19 Incorporating  Adverse Events of the Antiviral Drugs }}
	
	\vspace{0.1in}
	\author{{\small Bishal Chhetri$^{1,a}$, Vijay M. Bhagat$^{b}$, Swapna Muthusamy$^{b}$, Ananth V S$^{a}$,D. K. K. Vamsi$^{*,a}$,  Carani B Sanjeevi$^{c}$}\hspace{2mm} \\
		{\it\small $^{a}$Department of Mathematics and Computer Science, Sri Sathya Sai Institute of Higher Learning, Prasanthi Nilayam}, \\
		{\it \small Puttaparthi, Anantapur District - 515134, Andhra Pradesh, India}\\
		{\it\small $^{b}$Central Leprosy Teaching and Research Institute - CLTRI, Chennai, India}\\
		{\it\small $^{c}$  Vice-Chancellor, Sri Sathya Sai Institute of Higher Learning -  SSSIHL, India}\\
		{\it\small bishalchhetri@sssihl.edu.in, dkkvamsi@sssihl.edu.in$^{*}$,}\\
		{\it\small sanjeevi.carani@sssihl.edu.in, sanjeevi.carani@ki.se}\\
		{\small $^{1}$  First Author},
		{ \small $^{*}$ Corresponding Author}
		\vspace{1mm}
	}
	
	\date{}
	\maketitle

\begin{abstract} \vspace{.25cm}

The COVID -19 pandemic has resulted in more than 166 million infections and 3.4 million deaths worldwide. Several drug interventions targeting multiple stages of the pathogenesis of COVID -19 can significantly reduce induced infection and thus mortality. In this study, we first develop SIV model by incorporating the intercellular time delay and analyze the stability of the equilibrium points. The model dynamics admits disease-free equilibrium and the infected equilibrium with their stability, based on the value of basic reproduction number $R_0$. We then frame an optimal control problem with antiviral drugs and second-line drugs as control measures and study their roles in reducing the infected cell count and the viral load. The comparative study done in the optimal control problem suggests that when the first line antiviral drugs shows adverse events, considering these  drugs in reduced quantity along with the second line drug would be highly effective in reducing the infected cell and viral load in a COVID infected patients. Later, we formulate a time-optimal control problem with the objective to drive the system from any given initial state to the desired infection-free equilibrium state in minimal time. Using Pontryagin's Minimum Principle the optimal control strategy is shown to be of bang-bang type with possibility of switches between two extreme values of the optimal controls. Numerically, it is shown that the desired infection-free state is achieved in less time when the higher values of both the optimal controls are chosen. The results obtained from this study can be very helpful to researchers, epidemiologists, clinicians, and doctors who are working in this field. 

\end{abstract}
\section*{keywords} \vspace{.25cm}
 COVID-19; First Line Drugs; Second Line Drugs; Adverse Events; Time Optimal Control Problem; Bang-Bang Control;

\section{Introduction}

 Mathematical modeling of infectious
diseases is one of the most important researched area today. Mathematical epidemiology has contributed to a better
understanding of the dynamical behavior of  infectious diseases,
its impacts, and possible future predictions about its
spreading. Mathematical models are used in comparing,
planning, implementing, evaluating, and optimizing various
detection, prevention, therapy, and control programs. COVID-19 is one such contagious respiratory and vascular disease that has shaken the world today. It is  caused by severe acute respiratory syndrome coronavirus 2 (SARS-CoV-2). On 30 january it was declared as a public health emergency of international concern \cite{concern}. COVID-19 has resulted in around 166 million cases and  3.4 million deaths worldwide. Several mathematical models has been developed  to understand the dynamics  of the disease. Various compartment models to understand the dynamical behavior of COVID-19 can be found in \cite{samui2020mathematical, ndairou2020mathematical, zeb2020mathematical,leontitsis2021seahir, wang2020four, dashtbali2021compartmental, zhao2020five, chen2020mathematical, biswas2020covid, sarkar2020modeling}.  The optimal control studies to examine the role of control policies such  vaccination, treatment, quarantine, isolation, and screening for controlling COVID-19 infection can be found in  \cite{kkdjou2020optimal,libotte2020determination,aronna2020model, araz2021analysis,ndondo2021analysis, dhaiban2021optimal,gollmann2009optimal}.

The time-optimal control problem in SIR (Susceptible-Infected-Recovered) epidemic models is discussed, 
with different control policies such as vaccination, isolation, culling, and reduction of transmission in \cite{bolzoni2017time}. For all the policies investigated, the optimal control is shown to be of  bang–bang type. The results of this study suggests that  when a switch occurs between the optimal control values, the optimal strategy is to delay the
control action some amount of time and then apply the control at the maximum rate for the remainder
of the outbreak. A concept of the optimization of control measures for SARS epidemics
spread, based on the subsystem of the compartmental model is studied in \cite{jiang2007optimal}. Using Pontryagin’s minimum principle it is proved  that a
maximum quarantine/isolation measures  would reduce SARS epidemics to minimum extent in minimum time.  
The treatment of Covid-19 disease can be mainly classified in two settings. The initial period consisting of viral multiplication and body’s efforts to contain the spread of virus. In this phase there may not be any systematic symptoms such as breathlessness or the need of hospitalization and oxygen support. This initial period of the disease the body requires supports to fight against the infection, and therefore, mostly requires symptomatic treatment and supportive management. As the viral multiplication is one of the initial concerns, an antiviral drug such as the Remdesivir is the mainstay of the treatment \cite{mm}. 

In the later half in the view to contain and eliminate the virus the exaggerated immune response resulting mostly the compromised body’s essential functions such as increased respiratory rate to maintain the function of oxygenation etc. In this stage there require hospitalization and oxygen support. At this stage it is important to suppress the exaggerated immune response which is hazardous to maintain the essential functions. Therefore, the corticosteroids have important role to play here. Therefore, it is recommended to use dexamethasone \cite{recovery2021dexamethasone}, which has proven to improve the clinical outcome among patients who are in the later phase of disease and require oxygen support either through noninvasive ventilation or ECMO. If dexamethasone is not available the prednisolone or methyl prednisolone is also recommended with or without the combination of antiviral agents as there is immunosuppression due to steroids \cite{mm}. Therefore, the antivirals may be considered as first line drugs and the corticosteroids and the anti-inflammatory drugs as second line therapeutic modality.\\

Patients affected by moderate to severe COVID-19 pneumonia, who failed to respond to azithromycin, hydroxychloroquine and two doses of TCZ were evaluated in \cite{conticini2020high}. In all five patients, hydroxychloroquine and azithromycin were immediately administered at diagnosis, whereas intravenous TCZ, 8 mg/kg, within 72 hours from hospitalisation, and then repeated
after 24 hours. None of them reported substantial benefit after anti-IL-6 treatment and one patient required ICU admission and IV. From 3 to 5 days after the first administration of TCZ, all subjects were treated with intravenous methylprednisolone (MP) 1.5 mg/kg, slowly tapered after 5 days. It was observed that all the five patients evidenced a prompt and remarkable improvement: within 7 days, all three subjects in ICU
did not require IV anymore and were awakened. \cite{conticini2020high} confirmed the
evidence  about a possible synergic role of TCZ and methylprednisolone (MP) in limiting the exaggerating autoimmune response leading to ARDS. 

From the above clinical studies, it is clear that when patients fail to respond to the antiviral drugs such as azithromycin and hydroxychloroquine, considering the second line drugs such as methylprednisolone, TCZ and methylprednisolone (MP) after few days would be highly effective in improving the condition of the patients. In this context, within-host mathematical modelling  can be extremely helpful in understanding the efficacy of these interventions. Modelling the within-host dynamics of Covid-19 incorporating the adverse events of the antiviral drugs and studying the time optimal control problem, which is being attempted here is the first of its kind for COVID-19.  

In the article \cite{chhetri2020optimal}, the authors have done an extensive study on the role of antiviral drugs such as Arbidol, Remdesivir, Lopinavir and Ritonavir and immunomodulators such as INF, and Zinc in COVID-19 infection. In this work initially, we extend the work done in \cite{chhetri2020within} by incorporating inter-cellular time delay and do the stability analysis of the equilibrium points admitted by the model. Secondly, an optimal control problem is framed with antiviral agents and second-line drugs as control measures incorporating the adverse events caused by antiviral drugs and their roles in reducing the infected cell count and the viral load is studied. Lastly, a time-optimal control problem is formulated with the objective to drive the system from any given initial state to the desired infection-free equilibrium state in minimal time.

\section{Model Without Interventions}

Many within-host mathematical models have been developed to understand the dynamics of infectious diseases such as HIV, dengue influenza, and COVID -19 \cite{perelson1999mathematical, mishra2017micro, pawelek2019correction, vargas2020host}. Most of these studies ignore the intercellular delay by assuming that the infectious process is instantaneous, which may not be biologically true \cite{hattaf2012optimal}. 
A detailed study of the SIV model developed on the basis of pathogenesis dealing with COVID -19 is carried out by the authors in \cite{chhetri2020within}. In the present work, a model is developed and studied that takes into account the intercellular time delay. This model is described by the following system of differential equations.

\begin{eqnarray}
   	\frac{dS}{dt}& =&  \omega \ - \beta SV  - \mu S  \label{sec2equ1} \\
   	\frac{dI}{dt} &=& \beta S(t-\tau)V(t-\tau) \ -  { \bigg(d_{1}  + d_{2}  + d_{3} +  d_{4} + d_{5}+ d_{6}\bigg)I }  \ - \mu I   \label{sec2equ2}\\ 
   	\frac{dV}{dt} &=&  \alpha I   \ -  \bigg( b_{1}  + b_{2}  + b_{3}  +  b_{4}+ b_{5} + b_{6}\bigg)V    \ -  \mu_{1} V \label{sec2equ3}
   \end{eqnarray} 
   
The meaning of each of the variables and parameters of the model is given in table 1.   \\

   \begin{table}[ht!]
     	\caption{Meanings of the Variables and Parameters}
     	\centering 
     	\begin{tabular}{|l|l|} 
     		\hline\hline
     		
     		\textbf{Parameters/Variables} &  \textbf{Biological Meaning} \\  
     		\hline\hline 
     		$S$ & Healthy Type II Pneumocytes  \\
     		\hline\hline
     		
     		$I$ & Infected Type II Pneumocytes  \\
     		\hline\hline
     		$\omega$ & Natural birth rate of Type II Pneumocytes \\
     		\hline\hline
     		$V$ & Viral load  \\
     		\hline\hline
     		$\beta$ & Infection rate  \\
     			\hline\hline
     		$b$ & Burst rate \\
     		
     		\hline\hline
     		$\mu$ & Natural death rate of Type II Pneumocytes \\
     		\hline\hline
     		$\mu_{1}$ & Natural death rate of virus \\
     		\hline\hline
     		
     		$d_{1}, \hspace{.25cm} d_{2}, \hspace{.25cm} d_{3}, \hspace{.25cm} d_{4}, \hspace{.25cm} d_{5}, \hspace{.25cm} d_{6}$ & Rates at which virus is removed because\\
     	& the release of cytokines and chemokines  IL-6\\
     	&  TNF-$\alpha$, \hspace{.2cm}INF-$\alpha$,  \hspace{.2cm}CCL5, \hspace{.2cm}CXCL8 , \hspace{.2cm}CXCL10   \hspace{.2cm} respectively   \\ 
     	\hline\hline
     			
     	$b_{1}, \hspace{.25cm} b_{2}, \hspace{.25cm} b_{3}, \hspace{.25cm} b_{4}, \hspace{.25cm} b_{5}, \hspace{.25cm} b_{6}$ & Rates at which infected cell is removed because of\\
     		& the release of cytokines and chemokines  IL-6\\
     		&  TNF-$\alpha$, \hspace{.2cm}INF-$\alpha$,  \hspace{.2cm}CCL5, \hspace{.2cm}CXCL8 , \hspace{.2cm}CXCL10   \hspace{.2cm} respectively   \\ 
     		\hline\hline
     		$\tau$ & Inter-cellular delay \\
     		\hline\hline
     	\end{tabular}
     \end{table} \vspace{.25cm}
 
 \newpage
 \subsection{\textbf{Positivity and Boundednes}}
 
 In this subsection we will show that the system $(2.1)-(2.3)$ remains positive and bounded for all time $t$.
 Let
 $$ x=\bigg(b_{1}  + b_{2}  + b_{3} +  b_{4} + b_{5}+ b_{6}\bigg)$$
 $$y=\bigg(d_{1}  + d_{2}  + d_{3} +  d_{4} + d_{5}+ d_{6}\bigg)$$
 
 \underline{\textbf{Positivity}}\textbf{:}
 We now show that if the initial conditions of the system $(2.1)-(2.3)$ are positive, then the solution remain positive for any future time. Using the  equations $(2.1)-(2.3)$  we get,
\begin{align*}
\frac{dS}{dt} \bigg|_{S=0} &= \omega \geq 0 ,  &  
\frac{dI}{dt} \bigg|_{I=0} &= \beta S(t-\tau) V(t-\tau)  \geq 0,
\\ \\
\frac{dV}{dt} \bigg|_{V=0} &= b I \geq 0.  &
\end{align*}

\vspace{1.5mm}
\noindent
\\ 
Thus all the above rates are non-negative on the bounding planes (given by $S=0$, $I=0$, and $V=0$) of the non-negative region of the real space. So, if a solution begins in the interior of this region, it will remain inside it throughout time $t$. This  happens because the direction of the vector field is always in the inward direction on the bounding planes as indicated by the above inequalities. Hence, we conclude that all the solutions of the the system $(2.1)-(2.3)$ remain positive for any time $t>0$  provided that the initial conditions are positive. This establishes the positivity of the solutions of the system $(2.1)-(2.3)$.

\underline{\textbf{Boundedness}}\textbf{:}\\
Let  $N(t) = S(t)+I(t+\tau)+V(t+\tau) $ \\
Now,  
\begin{equation*}
\begin{split}
\frac{dN}{dt} & = \frac{dS}{dt} +  \frac{dI}{dt}+  \frac{dV}{dt}  \\[4pt]
& = \omega -\mu S(t) -x I(t+\tau)-\mu I(t+\tau) + b I(t+\tau)-y V(t+\tau)-\mu_{1}V(t+\tau) \\
& = \omega -(x-b) I(t+\tau)-\mu (I(t+\tau)+S(t)) -y V(t+\tau)-\mu_{1}V(t+\tau) \\
& \le \omega -k(S(t)+I(t+\tau)+V(t+\tau))\\
& = \omega - k N(t) \\
\end{split}
\end{equation*}
with the assumption that $x > \alpha$ and $k= \min(\mu_{1},\mu)$  

Here the integrating factor is $e^{-k t}.$ Therefore, after integration we get,

$N(t)\le \frac{\omega}{k} + ce^{-k t}.$ Now  as $t \rightarrow \infty$ we get, 
$$N(t)\le \frac{\omega}{k}$$
Thus we have shown that the system $(2.1)-(2.3)$ is positive and bounded. Therefore, the biologically feasible region is given by the following set, 
\begin{equation*}
\Omega = \bigg\{\bigg(S(t), I(t), B(t)\bigg) \in \mathbb{R}^{3}_{+} : S(t)  + I(t+\tau) + V(t+\tau) \leq \frac{\omega}{\mu}, \ t \geq 0 \bigg\}
\end{equation*}

We summarize the above discussion on positivity and boundedness of the system  $(2.1)-(2.3)$ by the following theorem. 

\begin{thm}
    Let $k=\min\{\mu, \mu_1\}$ and $x > \alpha$. Then the set \\
    $$\Omega = \bigg\{\bigg(S(t), I(t), V(t)\bigg) \in \mathbb{R}^{3}_{+} : S(t)  + I(t+\tau) + V(t+\tau) \leq \frac{\omega}{k}, \ t \geq 0 \bigg\}$$
    is a positive invariant and an attracting set for system $(2.1)-(2.3)$.
\end{thm}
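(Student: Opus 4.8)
The plan is to collapse the three-dimensional invariance statement into a single scalar differential inequality for the shifted total population $N(t)=S(t)+I(t+\tau)+V(t+\tau)$ introduced above, and then to run a Gronwall/comparison argument on $N$. Positivity of solutions for positive initial data has already been established, so throughout I may take $S,I,V\ge 0$; this is exactly what licenses replacing the cross terms by their signs when estimating $\tfrac{dN}{dt}$. Both claims of the theorem --- positive invariance and attractivity of $\Omega$ --- will then follow from the same bound on $N$.

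First I would differentiate $N$ and add the three equations, exploiting the forward shifts built into the definition of $N$: since $I$ and $V$ are evaluated at $t+\tau$, the delayed incidence $\beta S(t-\tau)V(t-\tau)$ in the $I$-equation becomes $\beta S(t)V(t)$ and cancels exactly the $-\beta S(t)V(t)$ generated by the $S$-equation. This cancellation is the whole purpose of shifting the arguments; it removes the only nonlinearity from the balance law, leaving
\[
\frac{dN}{dt}=\omega-(x-\alpha)\,I(t+\tau)-\mu\big(I(t+\tau)+S(t)\big)-y\,V(t+\tau)-\mu_1 V(t+\tau).
\]
Under the hypothesis $x>\alpha$ the coefficient $-(x-\alpha)$ of $I(t+\tau)$ is nonpositive, and $y\ge 0$, so the terms $-(x-\alpha)I(t+\tau)$ and $-yV(t+\tau)$ may be discarded. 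Bounding the surviving death rates below by $k=\min\{\mu,\mu_1\}$ then gives the scalar inequality $\tfrac{dN}{dt}\le \omega-kN(t)$.

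Finally, viewing $N$ as a function of $t$ along the positive solution, the integrating factor $e^{kt}$ turns this inequality into $N(t)\le \tfrac{\omega}{k}+\big(N(0)-\tfrac{\omega}{k}\big)e^{-kt}$, and both conclusions drop out: if $N(0)\le \omega/k$ then $N(t)\le \omega/k$ for all $t\ge 0$ (positive invariance, with positivity confining the trajectory to $\mathbb{R}^3_+$), while for arbitrary initial data $\limsup_{t\to\infty}N(t)\le \omega/k$ (the attracting property). The step I expect to be the main obstacle is making this comparison rigorous for a delay system: the clean closed inequality for $N$ only appears because of the forward-shift bookkeeping, so I would first check that $N(t)$ is well defined and differentiable for $t\ge 0$ given the prescribed history on $[-\tau,0]$, and that the scalar comparison principle genuinely applies along the trajectory, before invoking the integrating factor. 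Once $\tfrac{dN}{dt}\le\omega-kN$ is secured, the rest is routine.
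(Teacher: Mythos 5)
Your proposal follows essentially the same route as the paper: both define the shifted total $N(t)=S(t)+I(t+\tau)+V(t+\tau)$ so that the nonlinear incidence terms cancel, use $x>\alpha$ and $k=\min\{\mu,\mu_1\}$ to obtain $\frac{dN}{dt}\le\omega-kN$, and integrate to conclude. If anything, your write-up is slightly more careful than the paper's, since you explicitly separate the invariance conclusion (for $N(0)\le\omega/k$) from the attractivity conclusion ($\limsup_{t\to\infty}N(t)\le\omega/k$), whereas the paper compresses both into a single limiting statement.
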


 \subsection{{\textbf{Equilibrium Points and Basic Reproduction Number ($R_{0}$}) }}\vspace{.25cm}	

System $(2.1)-(2.3)$ admits two equilibria  namely, the infection free equilibrium $E_{0}=\bigg(\frac{\omega}{\mu},0,0 \bigg)$ and the infected equilibrium $E_{1}=(S^*, I^*, V^*)$  where,

$$\hspace{-1cm}S^*=\frac{(y+\mu_{1})(x+\mu)}{b \beta}$$

$$I^*=\frac{\alpha \beta \omega-\mu (y+\mu_{1})(x+\mu)}{b \beta (x+\mu)}$$

$$V^*=\frac{b \beta \omega-\mu(y+\mu_{1})(x+\mu)}{ \beta (x+\mu)(y+\mu_{1})}$$

Now  we calculate the basic reproduction number which is the most important quantity in any infectious disease models.  The basic reproduction number is calculated using the next generation matrix method \cite{diekmann2010construction} and the expression for $R_{0}$ for the system $(2.1)-(2.3)$ is given by \vspace{.5cm}\\

\begin{equation}
\mathbf{ R_{0}}= \mathbf{\frac{\beta b \omega}{\mu (x+\mu) (y+\mu_{1})}} \label{sec3equ1}\\
\end{equation}

With the definition of $R_0$, the infected equilibrium $E_1$ can be re-written as,
$E_{1}=(S^*, I^*, V^*)$  where,
$$\hspace{-1cm}S^*=\frac{\omega}{R_0 \mu}$$

$$I^*=\frac{\omega (R_0 -1)}{R_0 (x+\mu)}$$

$$V^*=\frac{\mu (R_0-1)}{\beta}$$
Since negative population does not make sense, the existence condition for the infected equilibrium point $E_1$ is that $R_0 > 1$.

\subsection{\textbf{Stability Analysis}}
In this section we analyse the stability of  equilibrium points $E_0$ and $E_1$ admitted by the system $(2.1)-(2.3)$.  This is done based on the nature of the eigen values of the jacobian matrix evaluated at each of the equilibrium point.

\subsubsection{\textbf{Stability of $E_0$}}
The jacobian matrix of the system $(2.1)-(2.3)$ at the infection free equilibrium $E_0$ is given by, \\

\begin{equation*}
J_{E_{0}} = 
\begin{pmatrix}
-\mu & 0 & \frac{-\beta\omega}{\mu} \\
0 & -(x+\mu) & \frac{\beta\omega e^{-\lambda \tau}}{\mu} \\
0 & \alpha & -(y+\mu_{1})
\end{pmatrix}
\end{equation*} \\
 
 The characterstic equation of $J_{E_0}$ is given by,
 \begin{equation}
 (-\mu-\lambda)\bigg(\lambda^2+(x+y+\mu+\mu_1)\lambda + (x+\mu)(y+\mu_1)-\frac{\beta \omega b e^{-\lambda \tau}}{\mu}\bigg) \label{111b}
 \end{equation}
 One of the eigenvalue of characteristic equation $(\ref{111b})$ is $-\mu$ and the other two are the roots of the following equation.\\
 \begin{equation}
 \bigg(\lambda^2+(x+y+\mu+\mu_1)\lambda + (x+\mu)(y+\mu_1)-\frac{\beta \omega b e^{-\lambda \tau}}{\mu}\bigg) \label{e0}
 \end{equation}
 when $\tau=0$, substituting for $R_0$ in $(2.6)$ we get the characterstic equation of the form,
  \begin{equation}
 \bigg(\lambda^2+(x+y+\mu+\mu_1)\lambda - (R_0-1)(x+\mu)(y+\mu_1)\bigg)
 \end{equation}

We see that when $R_0 < 1$ all the eigenvalues of equation $(2.7)$ is negative. Therefore, $E_0$ remains locally asymptotically stable for $\tau=0$ whenever $R_0 < 1$.

Now we will examines the nature of the eigenvalues of equation $(\ref{e0})$ for the case $\tau \neq 0$. For examining the stability of $E_0$ with delay we will assume that $E_0$ is asymptotically stable for the case with $\tau=0$.

Let $\lambda \;=\; \mu(\tau) + i\omega(\tau)$ where, $\mu$ and $\omega$ are real. Since $E_0$ is asymptotically stable for $\tau=0$,  $\mu(0) < 0$. We will choose $\tau$ sufficiently close to 0 and use continuity of $\tau$ to examine the stability of $E_0$ for $\tau \neq 0$. Let $\tau > 0$ be sufficiently small, then by continuity $ \mu(\tau) < 0 $ and $E_0$ will still remain stable. The stability changes for some values of $\tau$ for which $\mu(\tau) \;=\; 0 $ and $\omega(\tau)\ne 0 $ that is when $\lambda$ is purely imaginary. Let $\tau^*$ be such that $\mu(\tau^*) \;=\;0 $ and $\omega(\tau^*)\ne 0$. In this case the steady state loses stability and becomes unstable when $\mu(\tau)$ becomes positive.\\
By Rouche’s theorem \cite{dieudonne2011foundations} the transcendental equation $(\ref{e0})$ has roots with positive real parts if
and only if it has purely imaginary roots. Now we will assume that the characteristic equation $(\ref{e0})$
has purely imaginary roots and then arrive at a contradiction.

Let $$\lambda = i\omega$$ where $\omega > 0$ is real.\\
 Let $$s=x+y+\mu+\mu_1$$ 
 $$m= (x+\mu)(y+\mu)$$ 
 
  The expression for $R_0$ defined in $(2.4)$ in terms of $m$ is given by,
 $$R_0= \frac{\beta b \omega}{m \mu}$$
 Substituting all these in $(\ref{e0})$ we get\\
 $$\lambda^2 + s \lambda + m(1-R_0 exp^{-\lambda \tau})=0$$ 
 substituting $\lambda = i \omega$ we get 
 $$m-\omega^2+is\omega = m R_0(cos\omega\tau - i sin\omega\tau)$$
 Comparing the real and the imaginary part we get,\\
 $$m-\omega^2=m R_0 cos\omega\tau$$
 $$s\omega=- m R_0sin\omega\tau$$
 Adding and squaring the above equations we get,\\
 $$(m-\omega^2)^2+ s^2\omega^2= m^2 R_0^2$$
 Simplifying we get,\\
 \begin{equation}
    \omega^4+(s^2-2m)\omega^2+m^2(1-R_0^2) \label{w}
 \end{equation}
 
 From the definition of $s$ and $m$ we see that $(s^2 - 2m) > 0$. 
 
 Now since $R_0 < 1$,  all roots of $(\ref{w})$ are imaginary. This contradicts the fact that $\omega$ is real. Therefore, from Rouche's theorem we conclude that transcendental equation  $(\ref{e0})$ has all roots with negative real part. Therefore, $E_0$ remains asymptotically stable for all values of delay whenever $R_0 < 1$. 
 
 When $R_0=1$ then it is clear that $\omega =0 $ is a simple root of  $(\ref{w})$. This also leads to a contradiction since $\omega$ was assumed to be strictly positive. Therefore, with $R_0=1$ all the roots of $(\ref{e0})$ has negative real parts except $\lambda=0$. This implies that the infection free equilibrium $E_0$ is asymptotically stable.
 

Therefore, it follows from the continuity of $f(\lambda)$ on $(-\infty, + \infty)$ that equation $f(\lambda)=0$ has at least one positive root. Hence the characteristic equation $(\ref{e0})$  has atleast one positive root. Hence, the infection free equilibrium $E_0$ is unstable for $R_0 > 1$. All the discussion above is summarised by the following theorem. \\

\begin{thm}
      The infection free equilibrium point $E_0$ of  system $(2.1)-(2.3)$ is locally asymptotically stable for any time delay $\tau$ provided $R_0 < 1$. If $R_0$ crosses unity $E_0$ loses its stability and becomes unstable.   
\end{thm}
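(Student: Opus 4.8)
The plan is to reduce the stability question to the location of the roots of the transcendental characteristic equation $(\ref{e0})$, since the factored form of the Jacobian's characteristic polynomial already peels off one eigenvalue $-\mu < 0$. First I would dispose of the delay-free case $\tau = 0$: substituting the definition of $R_0$ turns $(\ref{e0})$ into the quadratic $\lambda^2 + (x+y+\mu+\mu_1)\lambda - (R_0-1)(x+\mu)(y+\mu_1)=0$, whose linear coefficient is positive and whose constant term equals $(1-R_0)(x+\mu)(y+\mu_1) > 0$ precisely when $R_0 < 1$. By the Routh--Hurwitz criterion for a quadratic with positive coefficients, both roots then lie in the open left half-plane, so $E_0$ is locally asymptotically stable at $\tau = 0$.

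For $\tau \neq 0$ the strategy is a continuity-plus-Rouch\'e argument. Writing a root as $\lambda(\tau) = \mu(\tau)+i\omega(\tau)$, the roots depend continuously on $\tau$, and since they all originate in the open left half-plane at $\tau = 0$, stability can be lost only when some root crosses the imaginary axis as $\tau$ grows. Hence it suffices to show that, for $R_0 < 1$, equation $(\ref{e0})$ admits no purely imaginary root $\lambda = i\omega$ with $\omega$ real for any $\tau$. I would substitute $\lambda = i\omega$, separate the real and imaginary parts of the identity $m-\omega^2+is\omega = mR_0(\cos\omega\tau - i\sin\omega\tau)$ with $s = x+y+\mu+\mu_1$ and $m=(x+\mu)(y+\mu_1)$, and then square and add the two scalar equations to eliminate $\tau$, arriving at the real polynomial $(\ref{w})$, namely $\omega^4 + (s^2-2m)\omega^2 + m^2(1-R_0^2)=0$.

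The decisive step is the sign analysis of $(\ref{w})$ viewed as a quadratic in $\omega^2$. From the definitions one computes directly $s^2 - 2m = (x+\mu)^2 + (y+\mu_1)^2 > 0$, so the coefficient of $\omega^2$ is positive, and when $R_0 < 1$ the constant term $m^2(1-R_0^2)$ is strictly positive as well. A quadratic in $\omega^2$ with all positive coefficients admits no positive value of $\omega^2$, contradicting the assumption that $\omega$ is real and nonzero. Therefore no root of $(\ref{e0})$ can ever reach the imaginary axis, and by Rouch\'e's theorem none acquires positive real part, so $E_0$ stays locally asymptotically stable for every $\tau \ge 0$ when $R_0 < 1$. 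The borderline $R_0 = 1$ forces the constant term to vanish, leaving $\omega = 0$ as the only (simple) root of $(\ref{w})$; this again contradicts $\omega > 0$ and leaves all nonzero roots in the left half-plane.

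For the instability claim when $R_0 > 1$, I would argue directly on the real axis. Letting $f(\lambda)$ denote the left-hand side of $(\ref{e0})$ restricted to real $\lambda$, one has $f(0) = m(1-R_0) < 0$, while $f(\lambda) \to +\infty$ as $\lambda \to +\infty$ since the $\lambda^2$ term dominates the bounded exponential contribution. By the intermediate value theorem $f$ has at least one positive real root, so $(\ref{e0})$ possesses an eigenvalue with positive real part and $E_0$ is unstable. I expect the main obstacle to be the $\tau \neq 0$ stable case: the genuine difficulty is not the algebra leading to $(\ref{w})$ but the justification that ruling out imaginary-axis crossings through the coefficient signs, combined with the Rouch\'e/continuity principle, actually precludes any right-half-plane root for \emph{all} delays rather than merely for small $\tau$.
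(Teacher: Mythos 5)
Your proposal is correct and follows essentially the same route as the paper: peel off the eigenvalue $-\mu$, settle $\tau=0$ via the quadratic with the $R_0$ substitution, rule out purely imaginary roots for $\tau\neq 0$ by the substitution $\lambda=i\omega$, squaring-and-adding to reach $\omega^4+(s^2-2m)\omega^2+m^2(1-R_0^2)=0$, invoking continuity/Rouch\'e, and proving instability for $R_0>1$ by an intermediate-value argument on the real axis. If anything, you make explicit two points the paper leaves terse — the identity $s^2-2m=(x+\mu)^2+(y+\mu_1)^2>0$ and the precise definition and limiting behaviour of $f(\lambda)$ used in the instability step — so no changes are needed.
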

 
 \subsubsection{\textbf{Stability of $E_1$}}

 The jacobian matrix of the system $(2.1)-(2.3)$ at $E_1$ is given by,

\begin{equation*}
J = 
\begin{pmatrix}
-\beta V^*-\mu & 0 & -\beta S^* \\
\beta V^{*} & -(x+\mu) & \beta S^{*} \\
0 & b & -(y+\mu_{1})
\end{pmatrix}
\end{equation*} 

The characteristic equation of the jacobian $J$ evaluated at $E_{1}$ is given by, \\
\begin{equation}
\lambda^3+(s+C)\lambda^2+(sC+(m-Ee^{-\lambda\tau}))\lambda+De^{-\lambda\tau}+(m-Ee^{-\lambda\tau})C=0
 \label{l}  \\
\end{equation}
 where $$C=\beta V^*+\mu$$
$$E=\beta b S^*$$
$$D=\beta^2 b S^* V^*$$
 when $\tau=0$ the characteristic equation  $(\ref{l})$ with the definition of $R_0$ reduces to\\
 \begin{equation}
\lambda^3 + \bigg(p+\mu R_{0}\bigg)\lambda^2 + \bigg(p\mu R_{0}\bigg)\lambda + q\mu \bigg(R_{0}-1\bigg) = 0 \label{sec3equ4}  \\
\end{equation}
where $p=x+y+\mu+\mu_{1}$ and $q=(x+\mu)(\mu_{1}+y).$ \\
Therefore, we see that whenever $R_0 > 1$,  all the roots of equation $(2.10)$ is negative implying the asymptotic stability of $E_1$. To study the stability of $E_1$ with for $\tau \neq 0$, we substitute $\lambda=i\omega$ in the characteristic equation $(\ref{l})$  and arrive at a contradiction in similar lines the stability of $E_0$ discussed earlier.

Substituting $\lambda=i \omega$ in equation  $(\ref{l})$ we get,\\
$$-(s+C)\omega^2-\omega E sin(\omega\tau)-(\omega^3-(sC+m)\omega+\omega E cos(\omega \tau))i=-mC-(D-EC)cso(\omega\tau)+(D-EC)sin(\omega\tau)i$$
  Comparing real and imaginary part we get
$$-\omega^3+(sC+m)\omega=\omega E cos(\omega\tau)+(D-EC)sin(\omega\tau)$$
 $$mC-(s+C)\omega^2=\omega E sin(\omega\tau)-(D-EC)cos(\omega\tau)$$
 squaring and adding we get,\\
 \begin{equation}
 \omega^6+A\omega^4+B\omega^2+(m^2C^2-(D-EC)^2=0 \label{w1}
 \end{equation}
 where $$A=(s+C)^2-2(sC+m)$$
 $$B=(sC+m)^2-2mC(s+C)-E^2$$
 
 If $A > 0, B > 0$ and $m^2C^2-(D-EC)^2 > 0$ then all the roots of $(\ref{w1})$ are all imaginary. This leads to a contradiction. Therefore, by Rouche's theorem all the eigenvalues of the characteristic equation $(\ref{l})$ have negative real parts. Hence,  $E_1$ is locally asymptotically stable for all the values of $\tau$. Substituting $m, C, s$ and $D$ in the expressions of $A$, $B$ and $m^2C^2-(D-EC)^2$ we see that $A > 0$, $B > 0$ and $m^2C^2-(D-EC)^2$ provided $R_0 >1$. Therefore, infected equilibrium point $E_1$ is asymptotically stable for all the values of $\tau$ provided $R_0 >1$. We summarize the discussion on the stability of $E_1$ by the following theorem.
 
  \begin{thm}
    The infected equilibrium point $E_1$ of  system $(2.1)-(2.3)$ is locally asymptotically stable for all the values of $\tau$ provided $R_0 >1$.  
\end{thm}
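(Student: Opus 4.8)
The plan is to reproduce the Rouché's-theorem strategy already deployed for $E_0$, now applied to the third-order transcendental characteristic equation $(\ref{l})$. First I would dispose of the delay-free case $\tau = 0$, where $(\ref{l})$ collapses to the ordinary cubic $(\ref{sec3equ4})$, namely $\lambda^3 + a_1\lambda^2 + a_2\lambda + a_3 = 0$ with $a_1 = p + \mu R_0$, $a_2 = p\mu R_0$, and $a_3 = q\mu(R_0 - 1)$. Here I would invoke the Routh--Hurwitz criterion for a cubic: all roots lie in the open left half-plane iff $a_1 > 0$, $a_3 > 0$, and $a_1 a_2 > a_3$. Since $p, q, \mu > 0$ and $R_0 > 1$, the first two are immediate. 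For the third, I would note that, with $p = x + y + \mu + \mu_1$ and $q = (x+\mu)(y+\mu_1)$, one has $p^2 > q$ (because $p = (x+\mu) + (y+\mu_1)$, so $p^2$ expands to $q$ plus strictly positive squares), whence $a_1 a_2 \ge p^2 \mu R_0 > q\mu R_0 > q\mu(R_0 - 1) = a_3$. This establishes local asymptotic stability of $E_1$ at $\tau = 0$ whenever $R_0 > 1$.

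For $\tau \neq 0$ the idea is to forbid any crossing of the imaginary axis as $\tau$ increases from $0$. By continuity of the roots in $\tau$ together with Rouché's theorem, stability can be lost only if $(\ref{l})$ acquires a purely imaginary root $\lambda = i\omega$ with $\omega > 0$ real for some $\tau > 0$. Assuming such a root exists, I would substitute $\lambda = i\omega$ into $(\ref{l})$, split into real and imaginary parts, and then square-and-add to cancel the $\sin\omega\tau$ and $\cos\omega\tau$ terms, arriving at the cubic-in-$\omega^2$ relation $(\ref{w1})$ with $A = (s+C)^2 - 2(sC + m)$, $B = (sC + m)^2 - 2mC(s+C) - E^2$, and constant term $m^2 C^2 - (D - EC)^2$.

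The decisive step, and the main obstacle, is to prove that under $R_0 > 1$ the three quantities $A$, $B$, and $m^2 C^2 - (D - EC)^2$ are all strictly positive. Once established, the cubic $z^3 + Az^2 + Bz + (m^2C^2 - (D-EC)^2)$ in $z = \omega^2$ has only positive coefficients, hence no positive real root; so no real $\omega > 0$ can satisfy $(\ref{w1})$, contradicting the assumption and showing $(\ref{l})$ has no purely imaginary roots for any $\tau$. Positivity of $A$ is immediate from the definitions of $s, C, m$. The labor lies in $B$ and in the constant term: here I would insert the explicit equilibrium data $C = \beta V^* + \mu$, $E = \beta b S^*$, $D = \beta^2 b S^* V^*$, together with $S^* = \omega/(R_0\mu)$ and $V^* = \mu(R_0 - 1)/\beta$, and re-express everything in terms of the model parameters and $R_0$. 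For the constant term I would factor $m^2 C^2 - (D - EC)^2 = (mC - D + EC)(mC + D - EC)$ and check each factor separately, which I expect to reduce to an inequality holding exactly because $R_0 > 1$ (equivalently $V^*, I^* > 0$); note also that $D = E\beta V^*$, so $D - EC = -\mu E$, which should simplify the expressions substantially. Verifying $B > 0$ is the most algebra-heavy piece, since it couples the quadratic combinations $sC + m$ and $E^2$.

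With $A$, $B$, and $m^2C^2 - (D-EC)^2$ all positive, the nonexistence of purely imaginary roots combined with stability at $\tau = 0$ yields, through Rouché's theorem, that every root of $(\ref{l})$ keeps a negative real part for all $\tau \ge 0$. This proves that $E_1$ is locally asymptotically stable for all values of $\tau$ whenever $R_0 > 1$, as claimed.
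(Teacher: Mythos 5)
Your proposal follows essentially the same route as the paper's own proof: dispose of $\tau=0$ via the reduced cubic $(\ref{sec3equ4})$, then for $\tau\neq 0$ assume a purely imaginary root of $(\ref{l})$, square and add the real and imaginary parts to obtain $(\ref{w1})$, and conclude by showing $A$, $B$, and $m^2C^2-(D-EC)^2$ are positive when $R_0>1$, so that no real $\omega$ exists and Rouch\'e's theorem gives stability for all $\tau$. If anything, you are more explicit than the paper at the two places it merely asserts the conclusion (the Routh--Hurwitz check at $\tau=0$, and the observation $D-EC=-\mu E$, which together with $E=m$ makes the positivity of $B$ and of the constant term a short computation).
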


 \subsection{\textbf{Numerical Illustrations of the stability of equilibrium points}}
 
In this section we numerically illustrate the stability of the equilibrium points admitted by the system $(2.1)-(2.3)$. The simulation is done using matlab software and ode solver ode23 is used to solve the system of equation. All the parameter values used for the simulation are taken from \cite{chhetri2020within}. For the parameter values from table 2, the values of  $s^2-2m$, $m^2-P^2$ and  $R_0$ were calculated and found to be  $ 1.9336$,  $ 0.6700$ and   $0.5136$ respectively. From theorem 2.2 we know that  $E_0$ remains asymptotically stable for all the values of $\tau$ whenever $R_0 <1$, figure 1 is an illustration of theorem 2.2. The asymptotic stability of $E_0 = (20,0,0)$ for two different values of $\tau$ is depicted in figure 1. The stability of $E_0$ was checked for different values of the $\tau$ and it was found that the infection free equilibrium point remained asymptotically stable for all the values of $\tau$. \\

   \begin{table}[ht!]
	\caption{Parameter values for the stability of $E_{0}$} 
	
	\begin{center}
		\begin{tabular}{|c|c|c|c|c|c|c|c|c|c|c|c|c|c|c|c|c|}
			\hline
			$\omega$ & $\beta$ & $\mu$ & $\mu_{1}$ & $b$ & $d_{1}$ & $d_{2}$& $d_{3}$ & $d_{4}$ & $d_{5}$ & $d_{6}$ &  $b_{1}$ & $b_{2}$ & $b_{3}$ & $b_{4}$ & $b_{5}$ & $b_{6}$ \\
			\hline
			  10 & 0.05& .5 & .1 & .49 & 0.027& 0.22 &0.1 &0.428 & 0.01 & 0.01 & 0.1& .1& 0.08 & .11 & .1 & .07  \\
			\hline

		\end{tabular}
	\end{center}
\end{table}

  \begin{figure}[hbt!]
	\includegraphics[height = 6cm, width = 15.5cm]{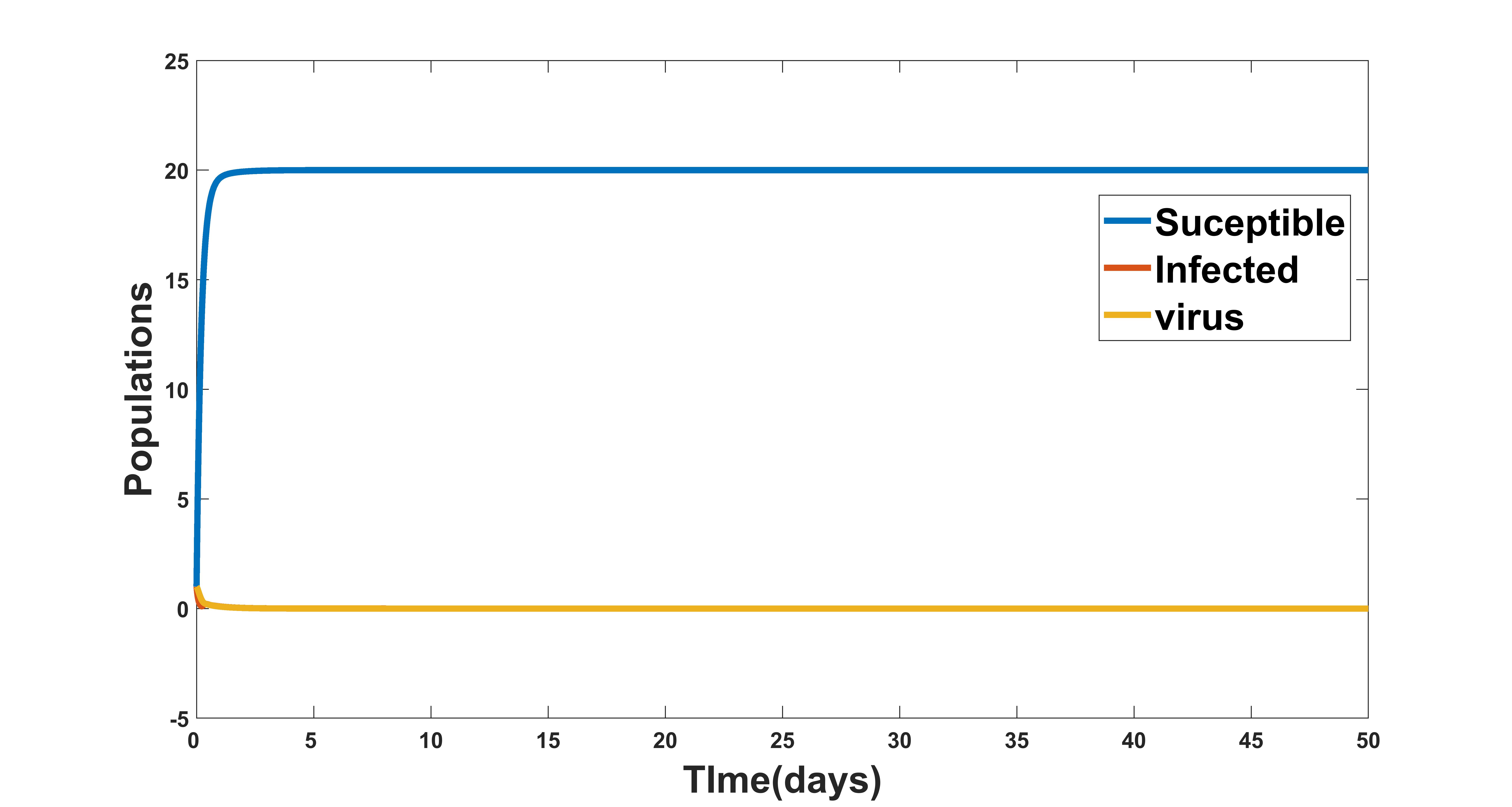}
		\includegraphics[height = 6cm, width = 15.5cm]{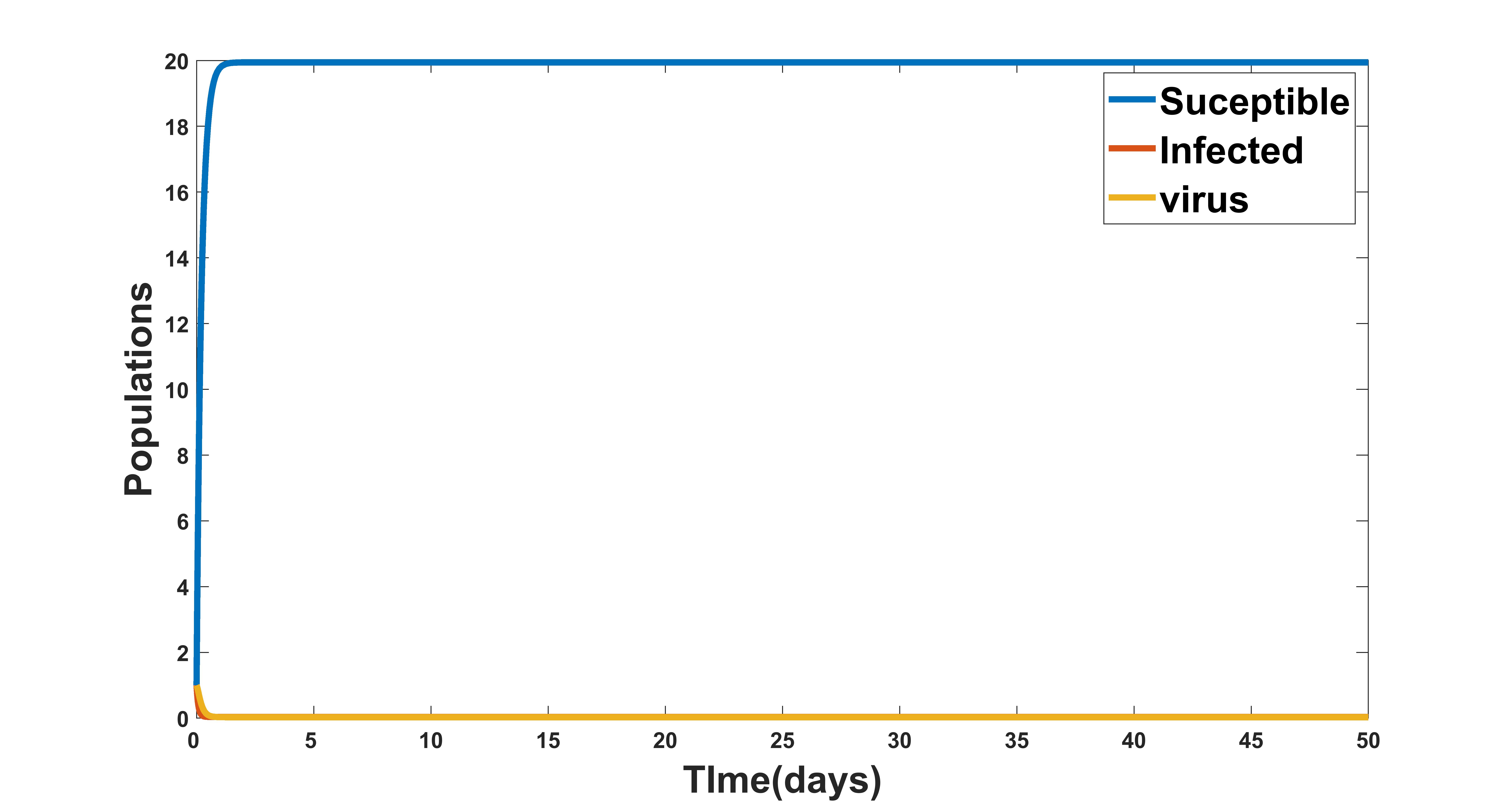}
	\caption{\label{first}Figure depicting the local asymptotic stability of $ E_{0} = (20,0,0).$ for $\tau=5$ and $\tau=15$ }
	\vspace{6mm}
\end{figure}

 \newpage
 For simulating the stability of $E_1$  the parameter values are taken from table 3. For these parameter values, the value of $R_0$ and the  infected equilibrium $E_1$ was calculated to be  $21.04 $ and $E_1=( 2.3760, 5.29, 4.008)$. Since $R_0 > 1$ for the choice of parameter values from table 3, $E_1$ remains asymptotically stable for all the values of delay ($\tau$) according to theorem 2.3. figure 2 is an illustration of theorem 2.3.  In figure 2 asymptotic stability of $E_1$ is illustrated for two different values of delay ($\tau$). Similar to the case of  stability of $E_0$,  here too the stability of $E_1$ was checked for different values of the $\tau$ and it was observed that the infected equilibrium point $E_1$ remained asymptotically stable for all the values of $\tau$,  whenever the value of basic reproduction number was above unity ($R_0 >1$).  \\ 
 
 \begin{table}[ht!]
	\caption{Parameter values for the stability of $E_{1}$} 
	
	\begin{center}
		\begin{tabular}{|c|c|c|c|c|c|c|c|c|c|c|c|c|c|c|c|c|}
			\hline
			$\omega$ & $\beta$ & $\mu$ & $\mu_{1}$ & $b$ & $d_{1}$ & $d_{2}$& $d_{3}$ & $d_{4}$ & $d_{5}$ & $d_{6}$ &  $b_{1}$ & $b_{2}$ & $b_{3}$ & $b_{4}$ & $b_{5}$ & $b_{6}$ \\
			\hline
			  5 & 0.5& .5 & .1 & .49 & 0.027& 0.22 &0.1 &0.428 & 0.01 & 0.01 & 0.1& .1& 0.08 & .11 & .1 & .07  \\
			\hline

		\end{tabular}
	\end{center}
\end{table}

 In figure 3 we plot the viral load over the period of 50 days with different values of inter-cellular delay $\tau$. The parameter values are as in table 2.  From this figure we observe that greater the value of inter-cellular delay lesser is the viral load in the body. This implies that the viral load in the body will be less if time taken by the cell  to produce new virions.

  \begin{figure}[hbt!]
	\includegraphics[height = 6cm, width = 15.5cm]{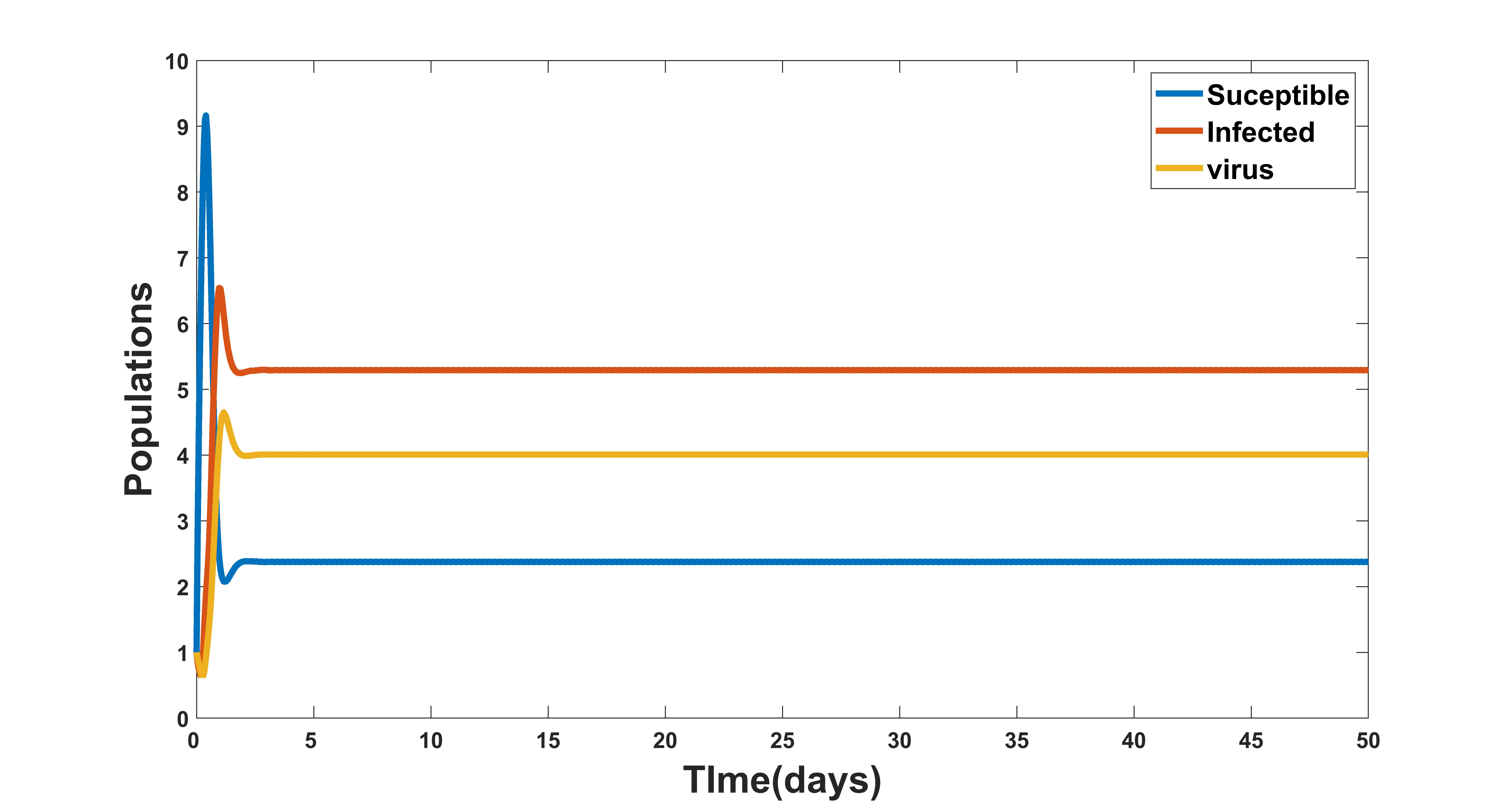}
		\includegraphics[height = 6cm, width = 15.5cm]{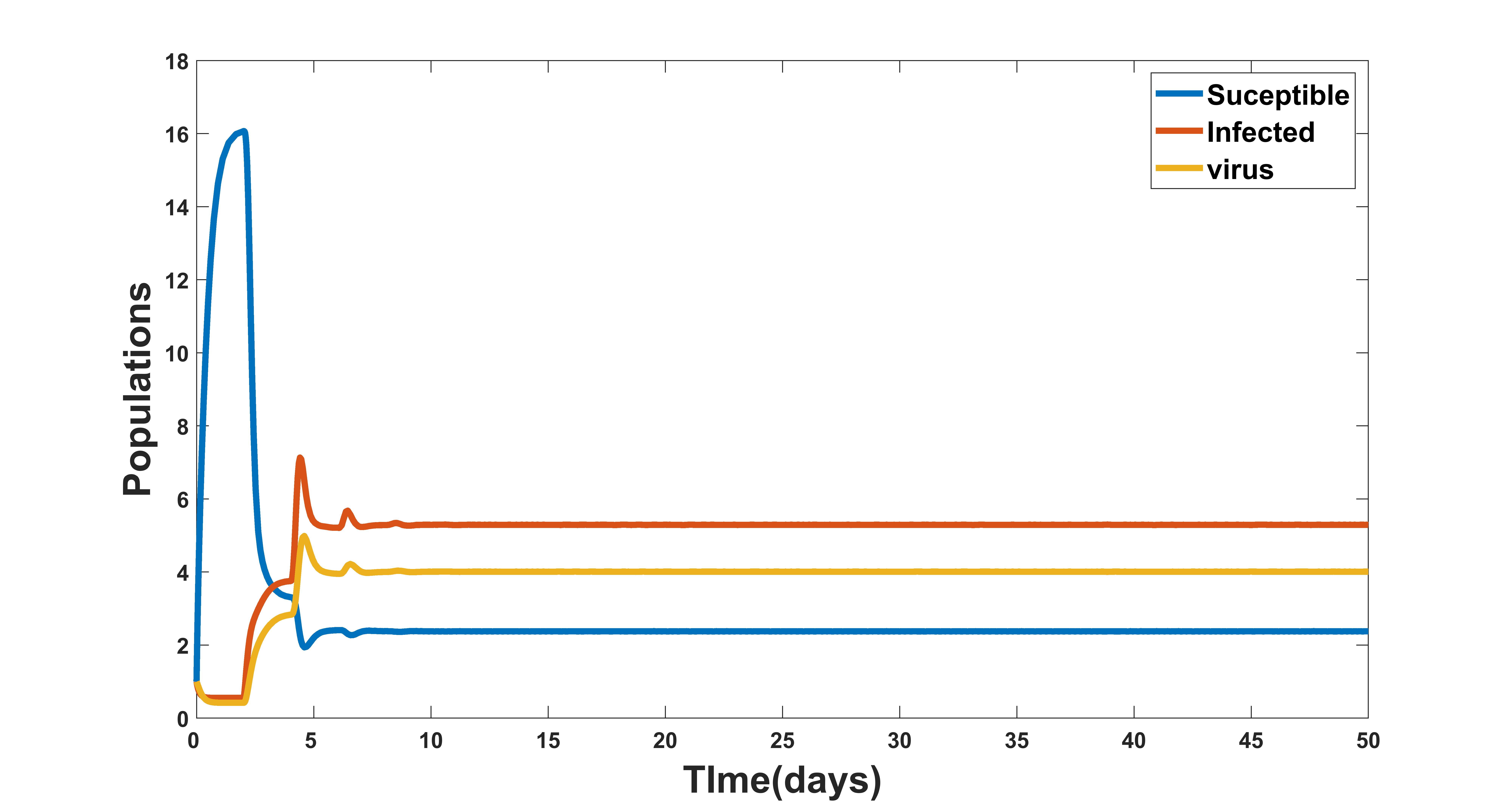}
	\caption{Figure depicting the asymptotic stability of $E_{1}$ (a):$\tau=5$, (b): $\tau=20$ }
	\vspace{6mm}
\end{figure}
 
 \begin{figure}[hbt!]

		\includegraphics[height = 6cm, width = 15.5cm]{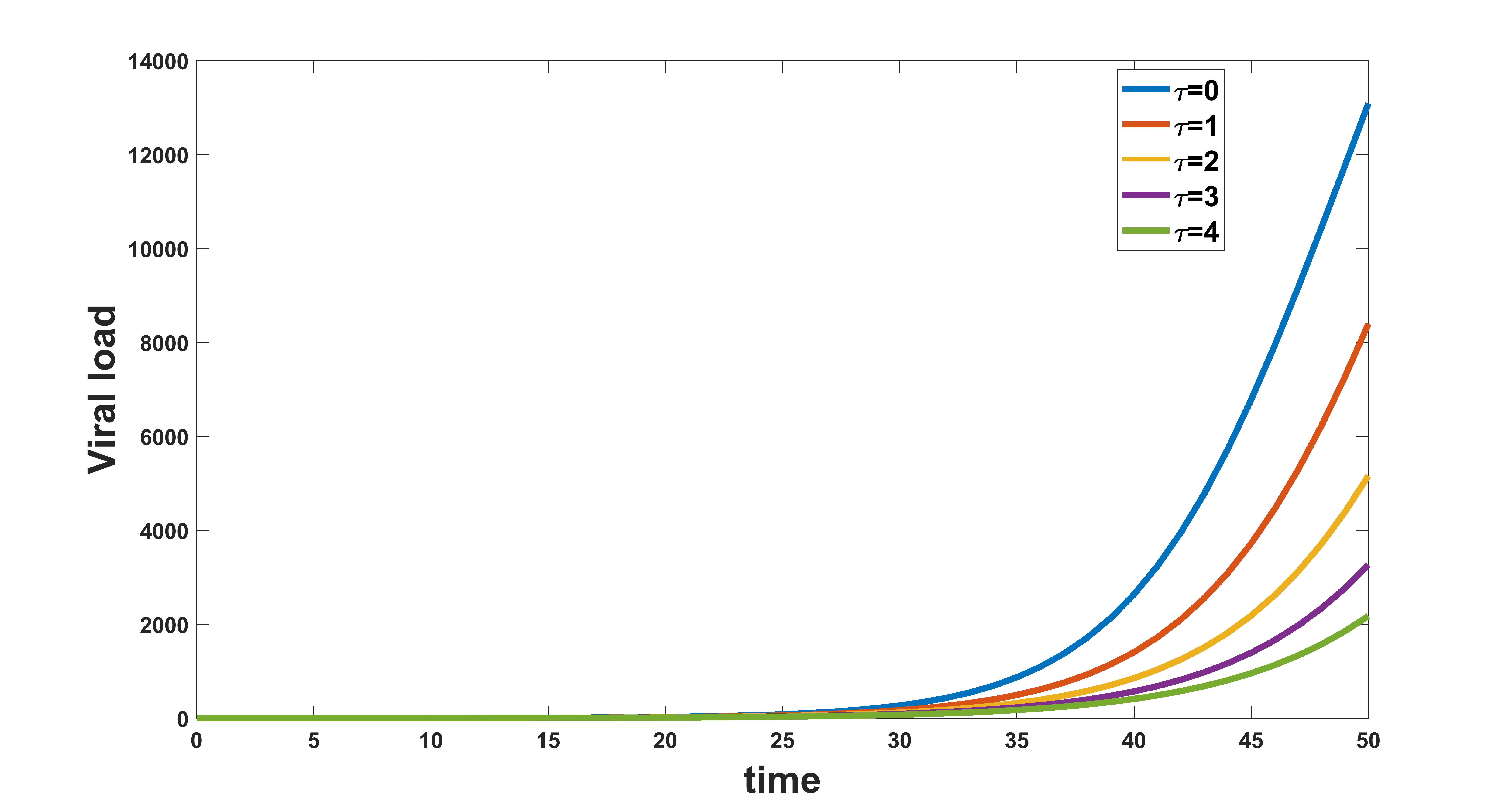}
	\caption{Figure depicting the viral load with different values of intercellular delay }
	\vspace{6mm}
\end{figure}

\newpage
 
\section{Model With Intervention}

Now in the model $(2.1)-(2.3)$ we introduce control variables. We consider two control measures: the first control measure is the antiviral drugs (also called first line drugs) such as Remdesivir, and HCQ  that inhibit viral replication in the body. This is represented by the tuple  $(\mu_{11}, \mu_{21})$  in the model where,  $\mu_{11}$ is the rate at which infected cell decreases because of the administration of antiviral drug and $\mu_{21}$ is the rate at which viral load decreases. The second control measure that we consider is the second line drugs such as methylprednisolone (MP), and dexamethasone. This control is denoted by the tuple $(\mu_{12},\mu_{22})$. The parameter $\epsilon_1=(1-\alpha)$ denotes the efficacy of the antiviral drugs with $\alpha$ denoting the probability of antiviral drugs showing adverse events. When the first line antiviral drug show adverse events $( 0 < \alpha < 1 $), it's quantity is reduced and along with it second line drugs are administered in a  COVID infected individual. The model incorporating these control measures  is given by following system of differential equations.\\

	\begin{eqnarray}
   	\frac{dS}{dt}&=&  \omega \ - \beta SV  - \mu S   \\
   	\frac{dI}{dt} &=&\beta S(t-\tau)V(t-\tau) \ -  x I   \ - \mu I  -\epsilon_1 \mu_{11}(t-\tau_1)I(t-\tau_1)-\alpha \mu_{12}(t)I(t)  \\ 
   	\frac{dV}{dt} &=&  b I  -\epsilon_1 \mu_{21}(t-\tau_1)I(t-\tau_1)-\alpha \mu_{22}(t)V(t)- \mu_{1} V-yV \label{sec2equ3}
   \end{eqnarray} 
   
 \subsection{ \textbf{Optimal Control Problem}}

Let $$U_1=(\mu_{11}, \mu_{21})$$
$$U_2=(\mu_{12},\mu_{22})$$

	Based on above, we now propose and  define the optimal control problem,  with the goal to reduce the cost functional defined as
	
	\begin{equation}
		J(\mu_{11},\mu_{12},\mu_{21},\mu_{22}) = \int_{0}^{T} \bigg(I(t)+V(t)+A_{1} (\mu_{11}^2(t)+\mu_{21}^2(t))+A_{2}(\mu_{12}^2(t)+\mu_{22}^2(t))\bigg) dt   \label{objj}
	\end{equation} 
		subject to the system $(3.1)-(3.3)$	such that $\textbf{u} = (\mu_{11}(t),\mu_{21}(t),\mu_{12}(t),\mu_{22}(t)) \in U$ 	where, $U$ is the set of all admissible controls  given by \\
		$U = \left\{(\mu_{11},\mu_{12},\mu_{21},\mu_{22}) : \mu_{11} \in [0,\mu_{11} max] , \mu_{12} \in [0,\mu_{12} max] , \\mu_{21}\in [0,\mu_{22} max] ,t \in [0,T] \right\}$

The upper bounds of control variables are based on the resource limitation  and  the limit to which these drugs would be prescribed to the patients. Here, the cost function $(\ref{objj})$ represents the number of infected cells and viral load throughout the observation period, and the overall cost for the implementation of each of the interventions. The coefficients $A_1$ and $A_2$ represents the overall cost or effort required for the implementation of first line and second line drugs respectively. Effectively, we want to minimize the infected cells and the viral load in the body with the optimal medication that is also least harmful to the body. Since the drugs administered have multiple effects, the non-linearity for the control variables in the objective become justified \cite{Joshi2002}.
	
	The integrand of the cost function (\ref{objj}), denoted by 
	$$L(I,V,U_1,U_2) = I(t)+V(t)+A_{1}(\mu_{11}(t)^2+\mu_{21}(t^2))+A_{2}(\mu_{12}(t)^2+\mu_{22}(t)^2)$$
	
	is called the Lagrangian or the running cost.
	
	The admissible solution set for the optimal control problem $(3.1)-(3.4)$ is given by
	
	$$\Omega = \left\{ (S, I, V, u)\; | \; S,  I \ and \ V  \text{that satisfy (4.2), } \forall \; \textbf{u} \in U \right\}$$

	

	\subsubsection{\textbf{Existence of Optimal Controls}}\vspace{.25cm}
	
		We will show the existence of optimal control function that minimize the cost function within a finite time span $[0,T]$ by showing that the optimal control problem $(3.1)-(3.4)$ satisfy the conditions stated in Theorem 4.1 of \cite{Wendell}.
	
	\begin{thm}
		There exists a 4-tuple of optimal controls $(\mu_{11}^{*}(t) , \mu_{21}^{*}(t) , \mu_{12}^{*}(t),\mu_{22}^{*}(t))$ in the set of admissible controls U such that the cost functional $(3.4)$ is minimized i.e., 
		
		$$J[\mu_{11}^{*}(t) , \mu_{21}^{*}(t) , \mu_{12}^{*}(t),\mu_{22}^{*}(t)] = \min_{(\mu_{11}^{*} , \mu_{21}^{*} , \mu_{12}^{*}, \mu_{22}^{*}) \in U} \bigg \{ J[\mu_{21},\mu_{11},\mu_{21},\mu_{12},\mu_{22}]\bigg\}$$ 
		corresponding to the optimal control problem $(3.1)-(3.4)$.
	\end{thm}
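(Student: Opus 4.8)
The plan is to verify that the optimal control problem $(3.1)$--$(3.4)$ satisfies the hypotheses of the Fleming--Rishel existence theorem (Theorem 4.1 of \cite{Wendell}), from which the existence of a minimizing $4$-tuple follows directly. There are essentially five conditions to check: nonemptiness of the set of admissible state--control pairs, closedness and convexity of the control set $U$, an affine (or linear-growth) dependence of the dynamics on the controls, convexity of the integrand $L$ in the controls, and a coercivity estimate on $L$.

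First I would establish that the admissible solution set $\Omega$ is nonempty. For any fixed $\textbf{u} \in U$, the right-hand sides of $(3.1)$--$(3.3)$ are continuous in the state variables and their delayed arguments and are Lipschitz on the bounded, positively invariant region supplied by Theorem $2.1$. Hence, by the standard existence--uniqueness theory for delay differential equations driven by bounded measurable controls, a unique nonnegative bounded solution $(S,I,V)$ exists on $[0,T]$ for each $\textbf{u}$, so $\Omega \neq \emptyset$. Next, the control set $U$ is a Cartesian product of closed bounded intervals $[0,\mu_{11}^{\max}]$, $[0,\mu_{21}^{\max}]$, $[0,\mu_{12}^{\max}]$, $[0,\mu_{22}^{\max}]$, hence convex and closed. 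Writing the vector field on the right of $(3.1)$--$(3.3)$, I would observe that the controls enter only through the terms $-\epsilon_1\mu_{11}I$, $-\alpha\mu_{12}I$, $-\epsilon_1\mu_{21}I$ and $-\alpha\mu_{22}V$, so the dynamics are affine in $\textbf{u}$ with coefficients bounded on $\Omega$; consequently there is a constant $c>0$ with $|f| \le c\,(1+|(S,I,V)| + |\textbf{u}|)$, the growth bound required by the theorem.

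The genuine content lies in the convexity and coercivity of the integrand. The Lagrangian $L(I,V,U_1,U_2)=I+V+A_1(\mu_{11}^2+\mu_{21}^2)+A_2(\mu_{12}^2+\mu_{22}^2)$ is convex in $\textbf{u}$, since the $I+V$ part is independent of the controls while the remaining part is a positive-definite quadratic form because $A_1,A_2>0$. For coercivity, setting $c_1=\min\{A_1,A_2\}>0$ and using $I,V \ge 0$ gives $L \ge c_1(\mu_{11}^2+\mu_{21}^2+\mu_{12}^2+\mu_{22}^2)=c_1|\textbf{u}|^2$, which is of the form $c_1|\textbf{u}|^{\rho}-c_2$ with $\rho=2>1$ and $c_2=0$. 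With all the hypotheses verified, the Fleming--Rishel theorem yields a control $(\mu_{11}^{*},\mu_{21}^{*},\mu_{12}^{*},\mu_{22}^{*}) \in U$ minimizing $J$.

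The main obstacle I anticipate is not any single inequality but the bookkeeping needed to accommodate the two distinct delays $\tau$ and $\tau_1$ when invoking a theorem classically stated for undelayed dynamics. One must argue carefully that the delayed control $\mu_{11}(t-\tau_1)$ (and likewise $\mu_{21}(t-\tau_1)$) still ranges over the same compact set, that the initial history on $[-\max\{\tau,\tau_1\},0]$ is prescribed and bounded, and that the trajectories remain trapped in the invariant region of Theorem $2.1$, so that the boundedness, convexity and lower-semicontinuity arguments underlying the minimizing-sequence construction carry over unchanged to the delayed setting.
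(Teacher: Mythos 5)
Your proposal is correct and follows essentially the same route as the paper: both verify the hypotheses of Theorem 4.1 of the Fleming--Rishel reference, namely nonemptiness of the admissible set via Lipschitz continuity and existence theory for the (delayed) state equations, closedness and convexity of $U$ together with affine dependence of the dynamics on the controls, and convexity plus coercivity of the Lagrangian using the same minorant $c\,(\mu_{11}^2+\mu_{21}^2+\mu_{12}^2+\mu_{22}^2)$ with $c=\min\{A_1,A_2\}$. Your explicit attention to the delays $\tau,\tau_1$ and to the fact that the Lipschitz bound holds on the bounded invariant region is a point of added care rather than a different method, since the paper treats these same issues implicitly.
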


	\begin{proof}
		
		 In order to show the existence of optimal control functions, we will show that the following conditions are satisfied : 
		
		\begin{enumerate}
			\item  The solution set for the system $(3.1)-(3.3)$ along with bounded controls must be non-empty, $i.e.$, $\Omega \neq \phi$.
			
			\item  U is closed and convex and system $(3.1)-(3.3)$ should be expressed linearly in terms of the control variables with coefficients that are functions of time and state variables.
			
			\item The Lagrangian L should be convex on U and $L(I,V,\mu_{11},\mu_{12},\mu_{21},\mu_{22}) \geq g(\mu_{11},\mu_{21},\mu_{12},\mu_{22})$, where $g(\mu_{11},\mu_{12},\mu_{21},\mu_{22})$ is a continuous function of control variables such that $|(\mu_{11},\mu_{12},\mu_{21},\mu_{22})|^{-1}$ $g(\mu_{11},\mu_{12},\mu_{21},\mu_{22})\to \infty$ whenever  $|(\mu_{11},\mu_{21},\mu_{12},\mu_{22})| \to \infty$, where $|.|$ is an $l^2(0,T)$ norm.
		\end{enumerate}

		Now we will show that each of the above conditions are satisfied : 
		
		1. From positivity and boundedness of solutions of the system $(3.1)-(3.3)$, we know that all solutions of the system are positive and bounded for each bounded control variable in $U$. To show $\Omega \neq \phi$ we will first show that RHS of (3.1)-(3.3) satisfies Lipschitz condition with respect to the state variables and use  Picard-Lindelof Theorem\cite{makarov2013picard} to show $\Omega \neq \phi$.

		Let 
		
\begin{equation*}
	X=
\begin{pmatrix}
S \\
I \\
V
\end{pmatrix}
\end{equation*}

The RHS of the system $(3.1)-(3.3)$ can be written as,
$$\frac{dX}{dt}=AX+F(X,X_{\tau},X_{\tau_{1}}))=G(X,X_{\tau},X_{\tau_{1}})$$
where\\
\begin{equation*}
A=
\begin{pmatrix}
-\mu & 0 & 0 \\
0 & -(x+\mu) & 0 \\
0 & b & -(y+\mu_{1})
\end{pmatrix}
\end{equation*} \\

\begin{equation*}
F=
\begin{pmatrix}
\omega-\beta S V \\
\beta S_\tau V_\tau -\epsilon \mu_{11\tau_1}I_{\tau_1}-\alpha \mu_{12}(t)I(t)\\
-\epsilon \mu_{21\tau_1}V_{\tau_1} - -\mu_{22}(t)V(t)

\end{pmatrix}
\end{equation*} 
Here,
$$X_{\tau}=X(t-\tau)$$
$$X_{\tau_1}=X(t-\tau_1)$$
$$\mu_{i1\tau_{1}}=\mu_{i1}(t-\tau_1)$$ 

Now \\
\begin{equation*}
\begin{split}
|F(X_1,X_{1\tau},X_{1\tau_1})-F(X_2,X_{2\tau},X_{2\tau_1})| &\le
M_1|X_1-X_2|+M_2|X_{1\tau}-X_{2\tau}|+M_3|X_{1\tau_1}-X_{2\tau_1}| \\\\
|G(X_1,X_{1\tau},X_{1\tau_1})-G(X_2,X_{2\tau},X_{2\tau_1})| & \le
|A(X_1-X_2)|+M_1|X_1-X_2|+M_2|X_{1\tau}-X_{2\tau}|+M_3|X_{1\tau_1}-X_{2\tau_1}|\\\\
 & \leq M\bigg(|X_1-X_2|+|X_{1\tau}-X_{2\tau}|+|X_{1\tau_1}-X_{2\tau_1}|\bigg)\\
 \end{split}
 \end{equation*}

where $$M =max(|A|+ M_1, M_2, M_3)$$ and $M_i$ are independent of $S,I,V$.

		Hence,the right hand side of the system $(3.1)-(3.3)$ satisfies Lipschitz condition with respect to state variables.	Now, using the positivity and boundedness of the  solution of system $(3.1)-(3.3)$ and Picard-Lindelof Theorem\cite{makarov2013picard}, we have satisfied condition 1.
		
		2. The contro, set $U$ is closed and convex by definition. Also, the system $(3.1)-(3.3)$ is clearly linear with respect to each control variable such that coefficients are only state variables or functions dependent on time. Hence condition 2 is satisfied.
		
		3. Choosing $g(\mu_{11},\mu_{12},\mu_{21},\mu_{22}) = c(\mu_{11}^2+\mu_{21}^2+\mu_{12}^2+\mu_{22}^2)$ such that $c = min\left\{A_{1},A_{2}\right\}$, condition 3 is satisfied.
		
		Thus we have shown that all the conditions stated  in Theorem 4.1 of \cite{Wendell} is satisfied. Hence there exists optimal controls, a 4-tuple $(u_{11}^{*},u_{12}^{*},u_{21}^{*},u_{22}^{*})\in U$ that minimizes the cost function (\ref{objj}) subject to system $(3.1)-(3.3)$.
	\end{proof}

	\subsubsection{\textbf{Characterization of Optimal Controls}}
    In this section we obtain the optimal control solutions of the optimal control problem $(3.1)-(3.4)$ using Pontryagin's Maximum Principle  \cite{gollmann2009optimal}.
	
	The Hamiltonian for this problem is given by 
	$$H(S,I,V,U_1,U_2,\lambda) := L(I,V,U_1,U_2) + \lambda_{1} \frac{\mathrm{d} S}{\mathrm{d} t} +\lambda _{2}\frac{\mathrm{d} I}{\mathrm{d} t}+ \lambda _{3} \frac{\mathrm{d} V}{\mathrm{d} t}$$
	
		Here $\lambda$ = ($\lambda_{1}$,$\lambda_{2}$,$\lambda_{3}$) is called co-state vector or adjoint vector.
	
	Now the Canonical equations that relate the state variables to the co-state variables are  given by 
	
	\begin{equation}
	\begin{aligned}
	 \frac{\mathrm{d} \lambda _{1}}{\mathrm{d} t} &= -\frac{\partial H}{\partial S}-\chi_{[0,T-\tau]}(t)\frac{\partial H(t+\tau)}{\partial S(t-\tau)}\\
	 \frac{\mathrm{d} \lambda _{2}}{\mathrm{d} t} &= -\frac{\partial H}{\partial I}-\chi_{[0,T-\tau_1]}(t)\frac{\partial H(t+\tau_1)}{\partial I(t-\tau_1)}\\
	 \frac{\mathrm{d} \lambda _{3}}{\mathrm{d} t} &= -\frac{\partial H}{\partial V}-\chi_{[0,T-\tau]}(t)\frac{\partial H(t+\tau)}{\partial V(t-\tau)}-\chi_{[0,T-\tau_1]}(t)\frac{\partial H(t+\tau_1)}{\partial V(t-\tau_1)}
	\end{aligned}
	\end{equation}

	
	Substituting the Hamiltonian in $(3.5)$ we get the following  canonical system 
	\begin{equation}
	\begin{aligned}
	\frac{\mathrm{d} \lambda _{1}}{\mathrm{d} t} &= \lambda _{1}(\beta V+\mu)-\chi_{[0,T-\tau]}(t)\lambda_2(t+\tau)\beta V\\
	\frac{\mathrm{d} \lambda _{2}}{\mathrm{d} t} &= -1 + \lambda _{2}(x+\mu+\alpha \mu_{12}(t))-\lambda _{3}b + \chi_{[0,T-\tau_1]}(t)\lambda_2(t+\tau_1)\epsilon_1\mu_{11}(t) \\
	\frac{\mathrm{d} \lambda _{3}}{\mathrm{d} t} &= -1+\lambda _{1}\beta S+\lambda _{3} (\mu _{1}+y+\alpha \mu_{22})-\chi_{[0,T-\tau]}(t)\lambda_2(t+\tau)\beta S(t)+\chi_{[0,T-\tau_1]}(t)\lambda_3(t+\tau_1)\epsilon_1 \mu_{21}(t) 
	\end{aligned}
	\end{equation}
	along with transversality conditions
	$ \lambda _{1} (T) = 0, \  \lambda _{2} (T) = 0, \  \lambda _{3} (T) = 0. $
	
	Now, to obtain the optimal controls, we will use the Hamiltonian minimization condition \cite{gollmann2009optimal} given by
		\begin{equation}
	    \frac{\partial H}{\partial u(t)}+ \frac{\partial H(t+\tau_1)}{\partial u(t-\tau_1)}=0
		\end{equation}
		
	Using $(3.7)$ and differentiating the Hamiltonian with respect to each of the controls and solving the equations, we obtain the optimal controls as 
	
	\begin{eqnarray*}
	\mu_{11}^{*} &=& \min\bigg\{ \max\bigg\{\frac{\chi_{[0,T-\tau_1]}(t)\epsilon_1 \lambda _{2}(t+\tau_1)I(t)}{2A_{1}},0 \bigg\}, \mu_{11}max\bigg\}\\
	\mu_{21}^{*} &= &\min\bigg\{ \max\bigg\{\frac{\chi_{[0,T-\tau_1]}(t)\epsilon_1 \lambda _{3}(t+\tau_1)V(t)}{2A_{1}},0 \bigg\}, \mu_{21}max\bigg\}\\
	\mu_{12}^{*}& = &\min\bigg\{ \max\bigg\{\frac{\alpha \lambda _{2}I(t)}{2A_{2}},0 \bigg\}, \mu_{12}max\bigg\}\\
	\mu_{22}^{*}& = &\min\bigg\{ \max\bigg\{\frac{\alpha \lambda _{3}V(t)}{2A_{2}},0 \bigg\}, \mu_{12}max\bigg\}\\
		\end{eqnarray*}
 
 \subsubsection{\textbf{Numerical Illustration of Optimal Control Problem}}
 	In this section, we perform numerical simulations to understand the efficacy of first line and second line drug interventions. This is done by studying the effect of control  on the  dynamics of the system $(3.1)-(3.3)$. Let there exist a step size $h > 0$ and $n > 0$ such that $T-t_0=nh$ and $\tau=m_1h$, and $\tau_1=m_2 h$. Let $m= \max(m_1, m_2)$. For programming point of view we consider m knots to left of $t_0$  and right of T and we obtain
the following partition:

$\Delta= \bigg(t_{-m}=-\max(\tau,\tau_1,\tau_2).... < t_1 < t_0 =0 < t_1 ...<t_n = t_f(=T) < ....<t_{n+m} )\bigg)$.\\
Then, we have $t_i=t_0+ih (-m\leq i\leq n+m)$. Now we define the state $S,I,V$, the adjoint vectors  $\lambda_1,\lambda_2,\lambda_3$  and controls $\mu_{11},\mu_{21},\mu_{12},\mu_{22}$ in terms of nodal points $S_i, I_i, V_i$ , $\lambda_1^i, \lambda_2^i, \lambda_3^i$ and $\mu_{11}^i,\mu_{21}^i,\mu_{12}^i,\mu_{22}^i$. For simulation we use combination of forward and backward
difference approximations and the parameter values are taken from \cite{chhetri2020within} which is given in table 4. The value of $\tau $ is taken as $1$ day \cite{bar2020science} and the value of $\tau_1$ is approximated and taken as $6$ day \cite{conticini2020high}. The probability of antiviral drugs showing adverse events is taken as $0.6$ \cite{grein2020compassionate}. The positive weights chosen for objective coefficients are $A_{1}$ = 500, $A_{2}$ = 200. Since second line drugs (methylprednisolone or dexamethasone) are cheaper and easily available, the value of the coefficient corresponding to the second line drug ($A_2$) is taken lesser than that of the coefficient corresponding to antiviral drug ($A_1$). \\

\begin{table}[ht!]
	\caption{Parameter Values} 
	
	\begin{center}
		\begin{tabular}{|c|c|c|c|c|c|c|c|c|c|c|c|c|c|c|c|c|c|c|c|c|}
			\hline
			$\omega$ & $\beta$ & $\mu$ & $\mu_{1}$ & $b$ & $d_{1}$ & $d_{2}$& $d_{3}$ & $d_{4}$ & $d_{5}$ & $d_{6}$ &  $b_{1}$ & $b_{2}$ & $b_{3}$ & $b_{4}$ & $b_{5}$ & $b_{6}$&$\alpha$ & $\tau$ & $\tau_1$\\
			\hline
			  10 & 0.05& .5 & 1.1 & .5 & 0.027& 0.22 &0.1 &0.428 & 0.01 & 0.01 & 0.1& .1& 0.08 & .11 & .1 & .07& 0.6 &1 & 6\\
			\hline

		\end{tabular}
	\end{center}
\end{table}

In figure 4, we evaluate the role of optimal drug interventions in reducing the infected cell population. The infected cell population is plotted over $40$ days considering individual role and combined role of the optimal drug interventions. We see that the infected cell count increases exponentially when no controls are considered, whereas  the infected cell count starts decreasing as optimal controls are considered with maximum decrease when both the first line and second line drugs are considered together. The viral population is plotted in figure 5 and compared with the no intervention case, only first line drug case, and combined first and second line drug case. It is seen that the viral load decreases maximum when both the first line and second line drugs are considered together. In figure 6 the profile of optimal cost generated for first line drug, and combined first and second line drugs is plotted. We notice that when both the controls (first line and second line drugs) are considered together, the generated cost which is the weighted sum of cost due to disease and controls is minimal (green color curve).

The comparative study done here,  simulated in figure (4, 5, 6)  suggests that when the first line antiviral drugs starts showing adverse events, considering the antiviral drugs in reduced quantity along with the second line drug is highly effective  in reducing the infected cell and viral load in a COVID infected patients than considering only the first line  drug. 

 \begin{figure}[hbt!]
	\includegraphics[height = 6cm, width = 15.5cm]{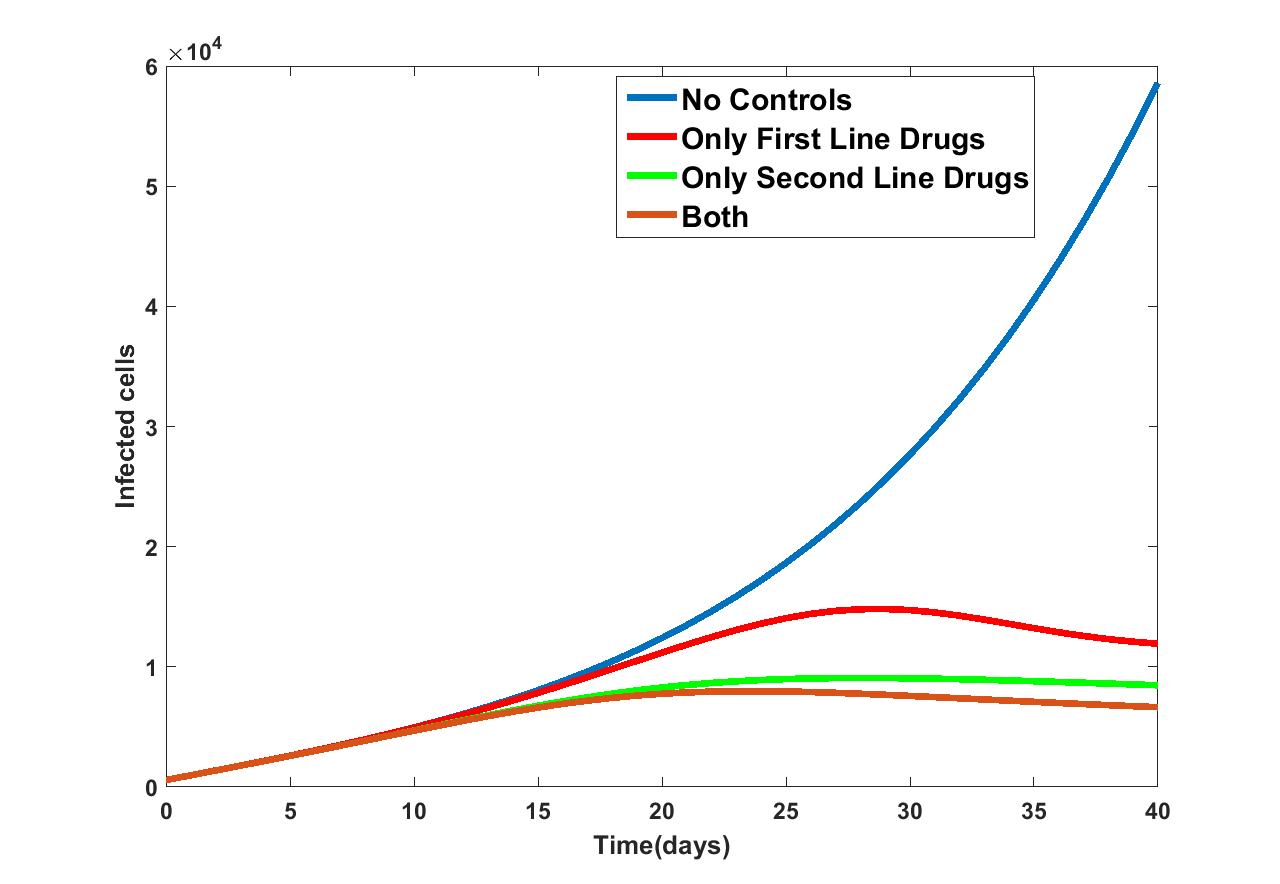}
	\caption{Figure depicting the infected cell population under different optimal intervention strategy }
	\vspace{6mm}
\end{figure}
 
 \begin{figure}[hbt!]

		\includegraphics[height = 6cm, width = 15.5cm]{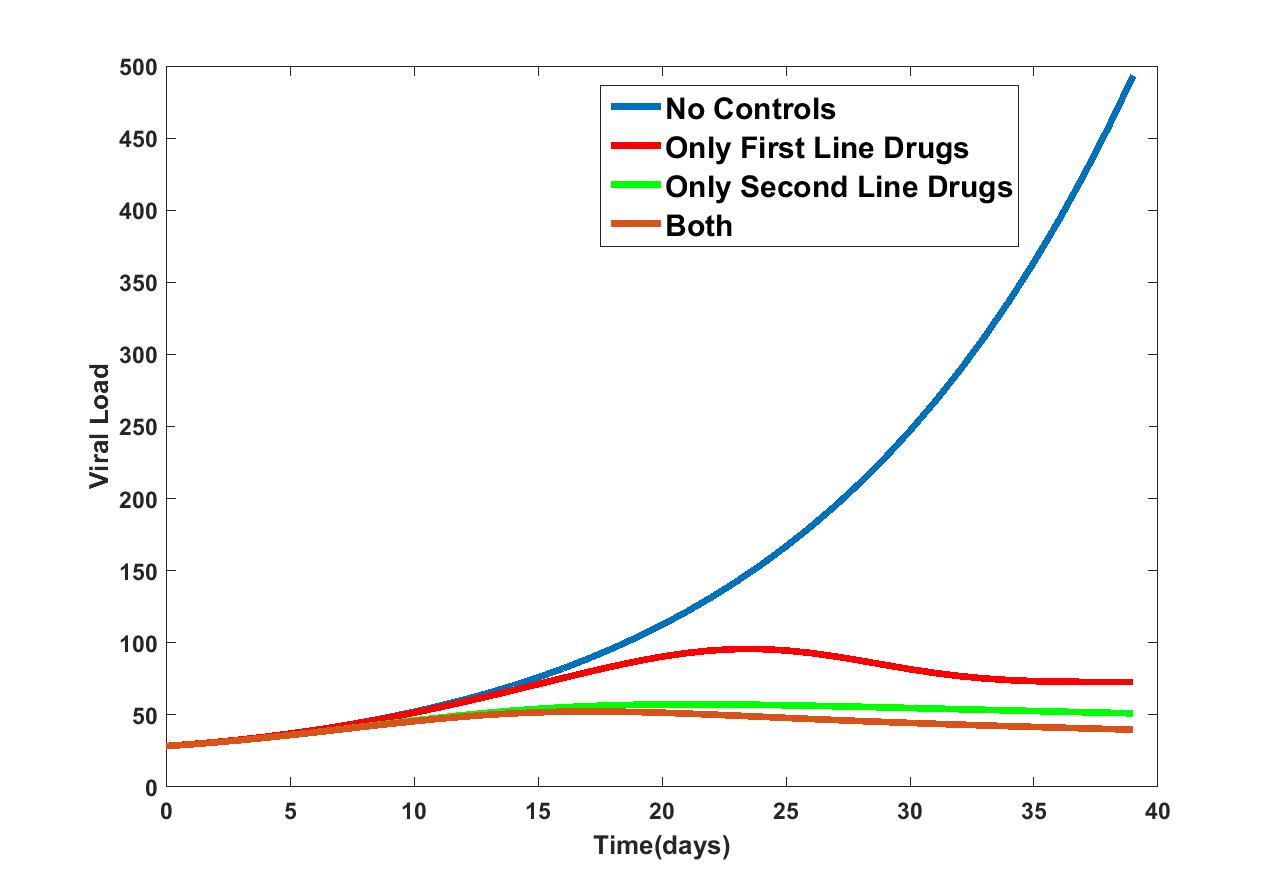}
	\caption{Figure depicting the viral load under different optimal intervention strategy }
	\vspace{6mm}
\end{figure}

\begin{figure}[hbt!]

		\includegraphics[height = 6cm, width = 15.5cm]{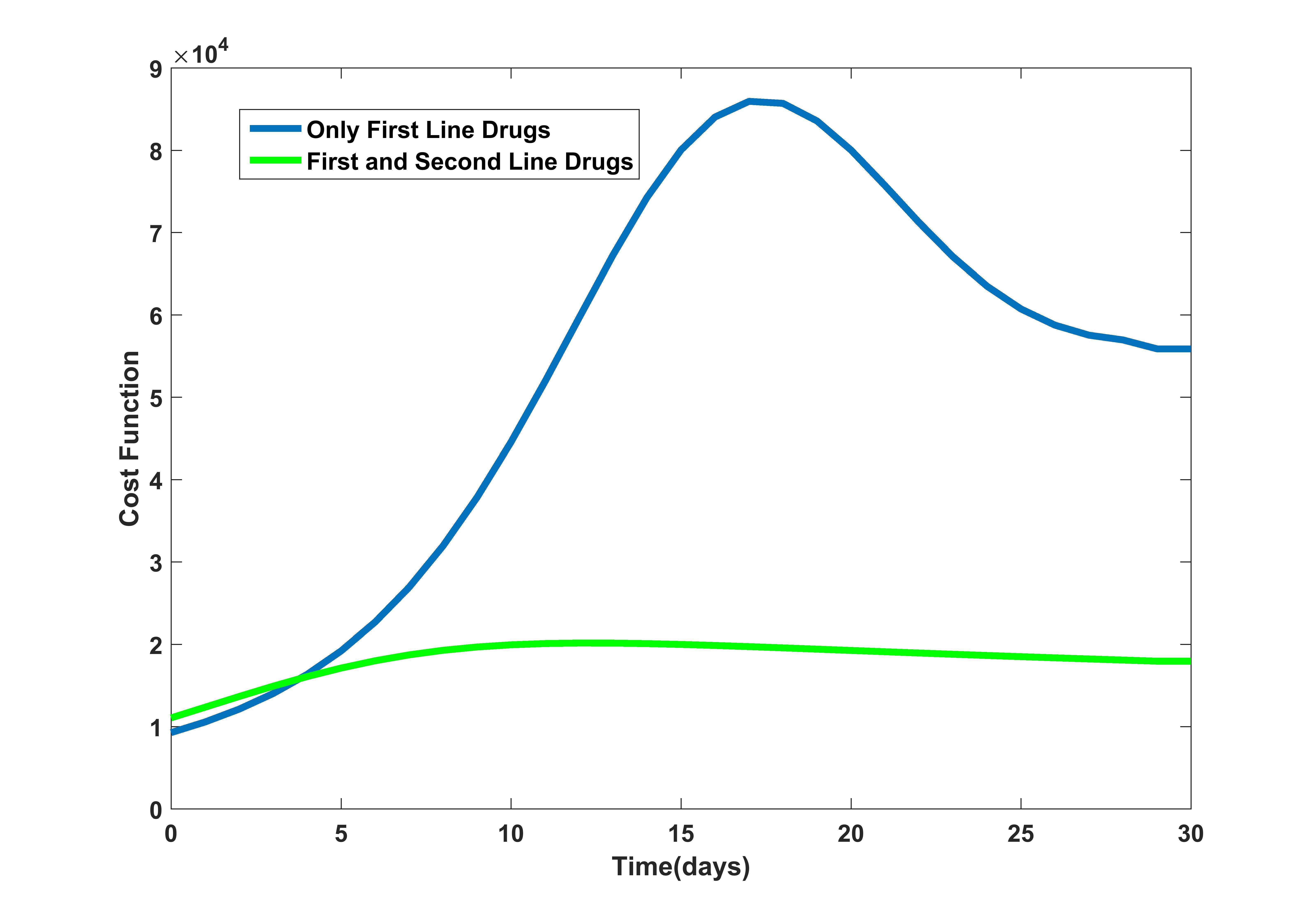}
	\caption{Figure depicting optimal cost under different optimal intervention strategy }
	\vspace{6mm}
\end{figure}

\newpage
\subsection{\textbf{Time Optimal Control Problem}}
In this section we formulate a time optimal control problem for the system $(3.1)-(3.3)$. Our objective here is to drive the system from any initial state to the desired infection free equilibrium state $E_0=(\frac{\omega}{\mu},0,0)$ in minimum time using the optimal controls.

\subsubsection{\textbf{Formulation of Time Optimal Control Problem and Existence of Optimal Control}}

Let $U$ denote the control set given by,

$U = \left\{(\mu_{11},\mu_{12},\mu_{21},\mu_{22}) : \mu_{11} \in [0,\mu_{11} max] , \mu_{12} \in [0,\mu_{12} max] , \mu_{21}\in [0,\mu_{22} max] ,t \in [0,T] \right\}$

The time optimal control problem (a Mayer problem of Optimal Control \cite{pierre1986optimization} ) with  $U$ as a control space is given by,

\begin{equation}
		\min \hspace{.2cm} J(\mu_{11},\mu_{12},\mu_{21},\mu_{22}) = \int_{0}^{T} 1 dt = \min_{u \in U} \hspace{.2cm} T   \hspace{1cm}\label{obj}
	\end{equation}
subject to:
	\begin{eqnarray}
   	\frac{dS}{dt}&=&  \omega \ - \beta SV  - \mu S   \\
   	\frac{dI}{dt} &=&\beta S(t-\tau)V(t-\tau) \ -  x I   \ - \mu I  -\epsilon_1 \mu_{11}(t-\tau_1)I(t-\tau_1)-\alpha \mu_{12}(t)I(t)  \\ 
   	\frac{dV}{dt} &=&  b I  -\epsilon_1 \mu_{21}(t-\tau_1)V(t-\tau_1)-\alpha \mu_{22}(t)V(t)- \mu_{1} V-yV \label{sec2equ3}
   \end{eqnarray} 
$(S(0), I(0),V(0))=(S_0, I_0, V_0)$ and $(S(T), I(T),V(T))=(\frac{\omega}{\mu}, 0, 0)$

Let $X(t)=(S(t),I(t),V(t))$ and set A be the subset of $tX$-space ($R^{1+3}$) ie. $A \subset R^{1+3}$ from where we get the state variables (S, I, V).

The set of all admissible solutions to the time optimal control problem is given by,

	$$\Omega = \left\{ (X(t), u(t))\; | \text{X satisfies system} \hspace{.15cm}    (3.9)-(3.11)  \hspace{.3cm} \forall u \in U \right\}$$
	
	 Now from the admissible solution set $\Omega$, we wish to find a solution that minimizes the time to reach the terminal state $(S(T), I(T), V(T))$ from any given initial state $(S_0, I_0, V_0)$. In the following theorem we prove the  existence of an optimal control  using Filippov's existence theorem.
	
	\begin{thm}
	      There exists an optimal controls $(\mu_{11}^*,\mu_{12}^*,\mu_{21}^*,\mu_{22}^*)$ that drives the system from any given initial state $(S_0, I_0, V_0)$ to the desired infection free state $(S(T), I(T), V(T))$ in minimum time for the time optimal control problem $(3.8)-(3.11)$ provided $\Omega \neq \phi$.
	    
	    \begin{proof}
	        For proving the existence of an optimal controls using Filippov's existence theorem it is enough to show that the following conditions are satisfied:
	        
	        1. The set A is compact 
	        
	       2. U is compact and convex
	        
	        3. The set of boundary points $B= \{0, S_0, I_0, V_0, T, S(T), I(T), V(T)\}$ is compact and objective function is continuous on B.
	        
	        We will now show that all the above conditions are satisfied by the time optimal control problem $(3.8)-(3.11)$.
	        
	        (i). From the positivity and boundedness section we know that if the initial values of the state variables are positive then  all the solutions of the system with each bounded controls remains  positive and bounded for all time. This proves that the set A is compact. 
	        
	        (ii). By the definition the control set $U$ is compact and convex. 
	        
	        (iii). Condition 3 is satisfied by the definition of $B= \{0, S_0, I_0, V_0, T, S(T), I(T), V(T)\}$ and the definition of the objective function $J=T$.
	        
	         \end{proof}
	\end{thm}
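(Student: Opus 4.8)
The plan is to follow the structure the statement suggests---verifying the hypotheses of Filippov's existence theorem---but I would organize the argument around the \emph{direct method}: extract an optimal control from a minimizing sequence and use the affine-in-control structure of the dynamics to guarantee that the limit is itself admissible. First I would record that since $\Omega \neq \phi$ there is at least one admissible control steering $(S_0,I_0,V_0)$ to $(\omega/\mu,0,0)$ in some finite time $\bar{T}$, so the infimal time $T^{*} = \inf\{T : (X,u) \in \Omega\}$ is finite and nonnegative. This is what makes the minimization well-posed and, just as importantly, furnishes an a priori upper bound $\bar{T}$ on the time horizon that every near-optimal trajectory must respect.

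Next I would establish the three Filippov hypotheses. For compactness of $A$ I would invoke the positivity and boundedness analysis of Section 2.1, adapted to the controlled system $(3.9)$--$(3.11)$: every admissible trajectory remains in the box $0 \le S,I,V \le \omega/k$, and, combined with the horizon bound $t \in [0,\bar{T}]$, this confines the trajectories to a closed and bounded---hence compact---subset of $tX$-space. That $U$ is compact and convex is immediate, since it is a product of closed bounded intervals. The objective $J = \int_{0}^{T} 1\,dt = T$ is trivially continuous on the boundary data, so condition (iii) is formal.

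The step I regard as the genuine content---and which a bare verification tends to gloss over---is the convexity of the augmented velocity set, since this is what Filippov's theorem actually needs in order to rule out relaxation and chattering. Here I would exploit that the right-hand sides of $(3.9)$--$(3.11)$ are \emph{affine} in the controls $(\mu_{11},\mu_{21},\mu_{12},\mu_{22})$, with coefficients depending only on the (delayed) states. Consequently, for each fixed time and state the set of attainable velocities is the affine image of the convex box $U$ and is therefore convex; adjoining the constant integrand $1$ preserves convexity. This is precisely the ingredient that permits passage to the limit in a minimizing sequence: by Arzel\`a--Ascoli the uniformly bounded and uniformly Lipschitz trajectories admit a uniformly convergent subsequence, the controls converge weakly-$*$ in $L^{\infty}$, and the convexity of the velocity set---through Filippov's selection lemma---yields an admissible control realizing the limiting trajectory, which then attains the time $T^{*}$.

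The main obstacle I anticipate is the \emph{intercellular delay}: Filippov's theorem is classically stated for delay-free systems, whereas $(3.9)$--$(3.11)$ involve $S(t-\tau)$, $V(t-\tau)$ and the delayed control terms evaluated at $t-\tau_{1}$. I would address this by treating the prescribed history on $[-\max(\tau,\tau_{1}),0]$ as fixed data and checking that the delayed terms damage neither the uniform Lipschitz bound needed for equicontinuity nor the convexity of the velocity set---both survive, because the delayed states enter the bounds as known bounded functions and the delayed controls still enter affinely. A fully careful version would either reduce the problem to an ordinary control system over an augmented state incorporating the history segment, or appeal to a delay-adapted form of Filippov's theorem; once that reduction is in place, the compactness and convexity checks above go through unchanged.
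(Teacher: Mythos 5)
Your proposal is correct and rests on the same theorem the paper invokes---Filippov's existence theorem---but your execution is genuinely more complete than the paper's own proof, which is a bare checklist. The paper verifies only that the set $A$ is compact (via positivity and boundedness), that $U$ is compact and convex, and that the boundary set $B$ is compact with $J=T$ continuous on it; it then stops. It never verifies the hypothesis that is the actual substance of Filippov's theorem, namely the convexity of the augmented velocity set $\tilde{Q}(t,x)=\{(1,f(t,x,u)) : u \in U\}$, which is precisely what rules out relaxation/chattering and permits passage to the limit along a minimizing sequence. You correctly isolate this as the key step and discharge it by observing that the right-hand sides of $(3.9)$--$(3.11)$ are affine in $(\mu_{11},\mu_{21},\mu_{12},\mu_{22})$ with state-dependent coefficients, so the velocity set is an affine image of the convex box $U$. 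You also supply two further ingredients the paper omits entirely: first, the remark that $\Omega \neq \phi$ gives a finite infimal time $T^{*}$ and an a priori horizon bound $\bar{T}$, without which ``compactness of $A$'' is not even meaningful (the time variable is part of the $tX$-space, and the paper's boundedness argument alone does not bound $T$); second, the delays, which place the system outside the classical delay-free setting of Filippov's theorem---your proposed remedies (treating the history segment as fixed data so the delayed states enter as known bounded functions, or reducing to an augmented state, while noting that the delayed controls still enter linearly so weak-$*$ limits pass through products like $\mu_{11}(t-\tau_{1})I(t-\tau_{1})$) are the standard and correct ways to close this. In short: same skeleton, but your version contains the mathematical content that turns the skeleton into a proof, whereas the paper's three-item verification would not go through without exactly the additions you describe.
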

\subsubsection{\textbf{Characterization of Optimal Controls}}
Given the time optimal control problem $(3.8)-(3.11)$ we apply Pontryagin's Minimum Principle for the characterization of the optimal controls. 

\begin{thm}(\textbf{Pontryagin's Minimum Principle for linear control problem (\cite{pontryagin2018mathematical,gollmann2009optimal}}))
Suppose that $U^*=(\mu_{11}^*, \mu_{12}^*,\mu_{21}^*, \mu_{22}^*)$ is a minimizer for the time optimal control problem $(3.8)-(3.11)$ and  $X(t)^*=(S(t)^*, I(t)^*, V(t)^*)$ denote the optimal solution of the system $(3.9)-(3.11)$. Then there exists a piecewise $C^1$ vector function $\lambda^*(t)=(\lambda_1^*(t),\lambda_2^*(t),\lambda_3^*(t)) \neq 0 $ such that 

$$ \frac{\mathrm{d} \lambda^*}{\mathrm{d} t} = -\frac{\partial H}{\partial X}-\chi_{[0,T-\tau]}(t)\frac{\partial H(t+\tau)}{\partial X(t-\tau)}-\chi_{[0,T-\tau_1]}(t)\frac{\partial H(t+\tau_1)}{\partial X(t-\tau_1)}$$ 
where $H$ is the Hamiltonian function defined as $H(X,U,\lambda)=1+\lambda_1 \frac{d S}{dt}+\lambda_2 \frac{d I}{dt}+\lambda_3 \frac{d V}{dt}$ and:

1. the function $H(X^*, u, \lambda^* )$ attains a minimum on $U$
$$H(X^*, u^*, \lambda^*) \leq H(X^*, u, \lambda^*) \forall u \in U$$
2. the Hamiltonian is constant equal to zero along the optimal solution:
$$H(X^*, u^*, \lambda^*)=0$$
 where $u^* = (\mu_{11}^*, \mu_{12}^*, \mu_{21}^*, \mu_{22}^* )$
\end{thm}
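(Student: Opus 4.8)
The plan is to obtain the three stated conditions as a direct specialization of Pontryagin's Minimum Principle for Mayer problems governed by delay differential equations, in the form proved in \cite{gollmann2009optimal}. The hypotheses of that principle are already in place: the existence of a time-optimal pair $(X^*,U^*)$ is guaranteed by the preceding Filippov existence theorem, the control set $U$ is compact, and the right-hand sides of $(3.9)$--$(3.11)$ are continuously differentiable in the state and its two delayed copies and affine in the control. Since the cost is $\int_0^T 1\,dt$, this is a minimum-time (Mayer) problem, so the integrand contributes the constant running cost $1$, giving $H = 1 + \lambda_1\dot S + \lambda_2\dot I + \lambda_3\dot V$ exactly as stated.

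First I would assemble the adjoint system. For a problem with constant state delays $\tau,\tau_1$, the delayed version of PMP produces costate equations carrying advanced correction terms: each variable entering the dynamics with a delay contributes a term of the form $\chi_{[0,T-\tau]}(t)\,\partial H(t+\tau)/\partial X(t-\tau)$ (and similarly for $\tau_1$), the indicator function switching the correction off near the terminal time, where the advanced argument would exceed $T$. Differentiating $H$ with respect to $S,I,V$ and their delayed arguments yields the explicit canonical system in the asserted form. Because both endpoints of the trajectory are prescribed, no boundary condition is imposed on $\lambda^*$; in its place the principle supplies the nontriviality assertion $\lambda^*\neq 0$, which is what makes the costate a meaningful multiplier.

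Condition 1 is the pointwise Hamiltonian minimization built into PMP: along the optimal trajectory $H(X^*,u^*,\lambda^*)\le H(X^*,u,\lambda^*)$ for every admissible $u$ and almost every $t$. Because the dynamics are affine in the controls, $H$ is affine in each $\mu_{ij}$, so the minimizer is attained at an extreme point of the box $U$ determined by the sign of the coefficient (switching function) of each control in $H$; this is precisely what later forces the bang-bang structure, and no interior critical-point analysis is required here.

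The subtle part is condition 2, the identity $H\equiv 0$. I would argue in two steps. Since the terminal time $T$ is itself a free optimization variable, the free-final-time transversality condition for the minimum-time problem gives $H|_{t=T}=0$. To propagate this to all $t$, I would show $dH/dt=0$ along the optimal solution by differentiating $H$ and substituting the state equations $(3.9)$--$(3.11)$ together with the adjoint equations above, so that the terms cancel in pairs as in the delay-free autonomous case. The delay structure is the genuine obstacle: because $H$ depends on advanced and retarded copies of the state, the naive computation $dH/dt=\partial H/\partial t$ must be supplemented by the advanced terms appearing in the adjoint equations, and one must verify that these match across the delay shift so the total derivative still vanishes. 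Checking this bookkeeping — rather than the minimization condition — is where care is needed; once $dH/dt=0$ is established, combining it with $H|_{t=T}=0$ yields $H\equiv 0$ along the optimal solution.
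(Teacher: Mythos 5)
A preliminary remark on the comparison: the paper never proves this theorem --- it is stated as a known result imported from the cited references (Pontryagin's monograph and the delayed-PMP paper of G\"ollmann et al.) and is then simply \emph{applied} to derive the canonical system and the bang-bang structure. So your instinct to obtain it by specializing the delay version of Pontryagin's principle from \cite{gollmann2009optimal} is consistent with what the paper actually does, and your verification of the hypotheses (Filippov existence from the preceding theorem, compact convex control box, dynamics affine in the controls, adjoint system with advanced arguments and indicator cut-offs) is sound as far as it goes. The free-final-time transversality giving $H|_{t=T}=0$ is also standard.

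The genuine gap is in your propagation step for condition 2. For systems with state (and control) delays, the pairwise cancellation you plan to rely on does not occur, so $\frac{dH}{dt}=0$ fails pointwise in general. Writing $H_y$ for the partial derivative of $H$ with respect to the delayed-state argument, the adjoint equation replaces $-H_x[t]$ by $-H_x[t]-\chi_{[0,T-\tau]}(t)H_y[t+\tau]$, and a direct computation along the optimal trajectory (after the non-delay terms cancel and $H_u[t]\dot u=0$ a.e.) leaves the residual
$$\frac{dH}{dt}[t] \;=\; H_y[t]\,\dot X(t-\tau)\;-\;\chi_{[0,T-\tau]}(t)\,H_y[t+\tau]\,\dot X(t)\;+\;\big(\text{analogous terms in } \tau_1\big),$$
in which the delayed-state factor is evaluated at time $t$ but the advanced correction at time $t+\tau$; these are different quantities and do not cancel. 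Concretely, for $t\in[T-\tau,T]$ the indicator vanishes and $\frac{dH}{dt}[t]=\lambda_2(t)\,\beta\,\frac{d}{dt}\big[S(t-\tau)V(t-\tau)\big]+\cdots$, which has no reason to vanish. What survives is only an integral identity: integrating the residual over $[0,T]$ telescopes to the boundary term $\int_{-\tau}^{0}H_y[s+\tau]\,\dot X(s)\,ds$ supported on the initial history, so the cheap conclusion is a relation between $H(T)$ and $H(0)$, not $H\equiv 0$. Establishing condition 2 for delay systems therefore requires the machinery of the cited references (e.g., recasting the free-final-time problem as a fixed-final-time Mayer problem with augmented state), not the autonomous-case cancellation; the ``bookkeeping'' you defer is precisely where the argument breaks. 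A smaller, related point: because $\mu_{11},\mu_{21}$ enter with delay $\tau_1$, the pointwise minimization in your condition 1 should be of $H[t]+\chi_{[0,T-\tau_1]}(t)H[t+\tau_1]$ with respect to $u(t)$ rather than of $H[t]$ alone --- exactly the form the paper itself uses in its Hamiltonian minimization condition (3.7).
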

 
 The Hamiltonian for the problem $(3.8)-(3.11)$ is given by 
	$$H(S,I,V,U_1,U_2,\lambda) := 1 + \lambda_{1} \frac{\mathrm{d} S}{\mathrm{d} t} +\lambda _{2}\frac{\mathrm{d} I}{\mathrm{d} t}+ \lambda _{3} \frac{\mathrm{d} V}{\mathrm{d} t}$$

	Using Minimum Principle we have the following  Canonical equations that relate the state variables and the co-state variables 
	
	\begin{equation}
	\begin{aligned}
	\frac{\mathrm{d} \lambda _{1}}{\mathrm{d} t} &= \lambda _{1}(\beta V+\mu)-\chi_{[0,T-\tau]}(t)\lambda_2(t+\tau)\beta V\\
	\frac{\mathrm{d} \lambda _{2}}{\mathrm{d} t} &= \lambda _{2}(x+\mu+\alpha \mu_{12}(t))-\lambda _{3}b + \chi_{[0,T-\tau_1]}(t)\lambda_2(t+\tau_1)\epsilon_1\mu_{11}(t) \\
	\frac{\mathrm{d} \lambda _{3}}{\mathrm{d} t} &= \lambda _{1}\beta S+\lambda _{3} (\mu _{1}+y+\alpha \mu_{22})-\chi_{[0,T-\tau]}(t)\lambda_2(t+\tau)\beta S(t)+\chi_{[0,T-\tau_1]}(t)\lambda_3(t+\tau_1)\epsilon_1 \mu_{21}(t) 
	\end{aligned}
	\end{equation}

 Now from condition 1 of Theorem 3.3 (Hamiltonian Minimization condition) we have 
 $$H(X^*, u^*, \lambda^*) \leq H(X^*, u, \lambda^*) \forall u \in U$$
 The optimal Hamiltonian function is given by,
 \begin{equation*}
 \begin{split}
 H(X, u, \lambda) & = 1 + \lambda_1(t)\bigg( \omega \ - \beta SV  - \mu S\bigg)  \\[4pt]
 & + \lambda_2(t)\bigg(\beta S(t-\tau)V(t-\tau) \ -  x I   \ - \mu I  -\epsilon_1 \mu_{11}(t-\tau_1)I(t-\tau_1)-\alpha \mu_{12}(t)I \bigg)\\
 & + \lambda_3(t)\bigg(b I(t)  -\epsilon_1 \mu_{21}(t-\tau_1)V(t-\tau_1)-\alpha \mu_{22}(t)V- \mu_{1} V-yV\bigg)
  \end{split}
 \end{equation*}
 
 From these Hamiltonian equation we see that the minimization of Hamiltonian function depends on the extreme values of the control functions and the sign of $\lambda_2 I(t-\tau_1)$, $\lambda_3 V(t-\tau_1)$, $\lambda_2I(t)$ and $\lambda_3 V(t)$. Therefore, we conclude that the optimal controls might be of bang-bang type provided singular solution does not exists. The optimal control functions would like: 
 
 \begin{equation}
    \mu_{11}^* = 
    \begin{cases}
     \ 0,  &\text{if \quad}   \epsilon_1 \lambda_2(t)  I(t-\tau_1) < 0\\\\
     \mu_{11 max},  &\text{if \quad}   \epsilon_1 \lambda_2(t) I(t-\tau_1) > 0\\\\
      \mathord{?},  &\text{if \quad}   \epsilon_1 \lambda_2(t) I(t-\tau_1) =0 
     \end{cases}
 \end{equation}
 
 \begin{equation}
    \mu_{21}^* = 
    \begin{cases}
     \ 0,  &\text{if \quad}   \epsilon_1 \lambda_3(t) V(t-\tau_1) < 0\\\\
     \mu_{21 max},  &\text{if \quad}  \epsilon_1 \lambda_3(t) V(t-\tau_1) > 0\\\\
     \mathord{?},  &\text{if \quad}   \epsilon_1 \lambda_3  V(t-\tau_1) =0 
     \end{cases}
 \end{equation}
 
 \begin{equation}
    \mu_{12}^* = 
    \begin{cases}
     \ 0,  &\text{if \quad}   \alpha \lambda_2(t) I(t) < 0\\\\
     \mu_{12 max},  &\text{if \quad}    \alpha \lambda_2(t) I(t) > 0 \\\\
     \mathord{?},  &\text{if \quad}  \alpha \lambda_2(t) I(t) =0
     \end{cases}
 \end{equation}
 
  \begin{equation}
    \mu_{22}^* = 
    \begin{cases}
     \ 0,  &\text{if \quad}   \alpha \lambda_3(t) V(t) < 0\\\\
     \mu_{22 max},  &\text{if \quad}   \alpha  \lambda_3(t) V(t)  > 0\\\\
     \mathord{?},  &\text{if \quad}  \alpha \lambda_3(t) V(t) =0
     \end{cases}
 \end{equation}

   Now in order to show that the optimal controls are of bang-bang type we must show that the solution does not exhibit singular arc in some interval $B \subset [0, T]$.  Now in the following we will show that solution does not exhibit singular arc in interval $B \subset [0, T]$. Since $\alpha > 0 $ and $\epsilon_1 > 0$,  the maximum or minimum value of the optimal controls depends on the sign of $\lambda_2(t) I(t)$ and $\lambda_3(t)V(t)$.\\ 
   Let
   $$\phi_1(t)= \lambda_2(t) I(t)$$
    $$\phi_2(t)= \lambda_3(t) V(t)$$
    $$\phi(t) = (\phi_1(t), \phi_2(t))$$
    The derivative of $\phi_1(t)$ and $\phi_2(t)$ is given by,
     $$\dot \phi_1(t)= \lambda_2(t) \dot I(t) + \dot \lambda_2(t) I(t)$$
    $$\dot \phi_2(t)= \lambda_3(t) \dot V(t) + \dot \lambda_3(t) V(t)$$
 
  Now let's assume that singular solution exists, that is $\phi(t)=0$ on interval $B \subset [0, T]$. This would mean $\phi_1(t)=0$ and $\phi_2(t)=0$ on interval $B \subset [0, T]$. This implies that $\lambda_2(t)$ and $\lambda_3(t)$ both are zero in B since $I(t) > 0$ and $V(t)> 0$ $\forall t \in [0, T]$. Now $\phi_1(t)=0$ and $\phi_2(t)=0$ on interval $B$ implies that $\phi_1^{'}(t)=0$ and $\phi_2^{'}(t)=0$. From the definition of the derivative of $\phi_2^{'}(t)$, 
 $\phi_2^{'}(t)=0$ would imply that $\lambda_3^{'}(t)=0$ in $B$. Substituting $\lambda_3^{'}(t)=0$ in the canonical equations $(3.12)$ we find that $\lambda_1(t)=0$ in $B$. Therefore,  in interval $B \subset [0, T]$ we see that the adjoint vector $\lambda=(\lambda_1,\lambda_2,\lambda_3)=0$. This is a contradiction with Theorem 3.3, as adjoint variables $\lambda_1$, $\lambda_2$ and $\lambda_3$ cannot vanish simultaneously. Therefore, the switching functions $\phi_1(t)$ and $\phi_2(t)$ vanishes only at isolated points. As a consequences, the controls can assume two values: 0 and maximum value. The switching times are defined as the time instants at which the switching functions $\phi_1(t)$ and $\phi_2(t)$ changes its sign. Therefore, two types of switch can occur: one when the optimal control changes its value from zero to maximum and the other when optimal control changes its value from  maximum to zero.
 
 We summarize the above discussion in the following result.\\
 
 \begin{thm}
 The optimal control solution for the time optimal control problem $(3.8)-(3.11)$ is  a bang-bang type with possibility of switches occurring in the optimal trajectory. The optimal controls is given by,
 
  \begin{equation}
    \mu_{11}^* = 
    \begin{cases}
     \ 0,  &\text{if \quad}   \lambda_2(t) I(t-\tau_1) < 0\\\\
     \mu_{11 max},  &\text{if \quad}   \lambda_2(t) I(t-\tau_1) > 0
     \end{cases}
 \end{equation}
 
 \begin{equation}
    \mu_{11}^* = 
    \begin{cases}
     \ 0,  &\text{if \quad}   \lambda_3(t) V(t-\tau_1) < 0\\\\
     \mu_{21 max},  &\text{if \quad}   \lambda_3(t) I(t-\tau_1) > 0
     \end{cases}
 \end{equation}
 
 \begin{equation}
    \mu_{12}^* = 
    \begin{cases}
     \ 0,  &\text{if \quad}    \lambda_2(t) I(t) < 0\\\\
     \mu_{12 max},  &\text{if \quad}    \lambda_2(t) I(t) > 0
     \end{cases}
 \end{equation}
  \begin{equation}
    \mu_{22}^* = 
    \begin{cases}
     \ 0,  &\text{if \quad}    \lambda_3(t) V(t) < 0\\\\
     \mu_{22 max},  &\text{if \quad}     \lambda_3(t) V(t)  > 0
     \end{cases}
 \end{equation}
 
 \end{thm}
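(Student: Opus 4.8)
The plan is to invoke Pontryagin's Minimum Principle (Theorem 3.3) and exploit the fact that the state equations (3.9)--(3.11) enter the dynamics \emph{linearly} in each of the four controls, so the Hamiltonian is affine in $\mu_{11},\mu_{12},\mu_{21},\mu_{22}$; the pointwise minimization of $H$ over the box $U$ then forces each optimal control to sit at one of its two extreme values, the choice being dictated by the sign of the coefficient multiplying that control. First I would substitute the dynamics into $H=1+\lambda_1\dot S+\lambda_2\dot I+\lambda_3\dot V$ and collect the control-dependent terms, reading off that $\mu_{11}$ is multiplied by $-\epsilon_1\lambda_2(t)I(t-\tau_1)$ (after the shift handling the delayed argument), $\mu_{21}$ by $-\epsilon_1\lambda_3(t)V(t-\tau_1)$, $\mu_{12}$ by $-\alpha\lambda_2(t)I(t)$, and $\mu_{22}$ by $-\alpha\lambda_3(t)V(t)$. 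Since $\alpha>0$ and $\epsilon_1>0$, minimizing $H$ over $[0,\mu_{ij\,\max}]$ returns the value $0$ when the relevant product is negative and the value $\mu_{ij\,\max}$ when it is positive, which is exactly the candidate bang-bang form displayed in (3.13)--(3.16), including the ambiguous ``$?$'' case where the switching function vanishes.

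The substantive content is to rule out singular arcs, i.e.\ to show the switching functions cannot vanish on a set of positive measure, so that the ``$?$'' cases occur only at isolated instants. To this end I would set $\phi_1(t)=\lambda_2(t)I(t)$ and $\phi_2(t)=\lambda_3(t)V(t)$ and argue by contradiction: suppose $\phi=(\phi_1,\phi_2)\equiv 0$ on some subinterval $B\subset[0,T]$. Because the positivity result gives $I(t)>0$ and $V(t)>0$ for all $t$, vanishing of $\phi_1,\phi_2$ on $B$ forces $\lambda_2\equiv\lambda_3\equiv 0$ there; differentiating, $\phi_2'\equiv 0$ yields $\lambda_3'\equiv 0$ on $B$, and feeding $\lambda_3=\lambda_3'=0$ into the third canonical equation of (3.12) isolates $\lambda_1\beta S\equiv 0$, whence $\lambda_1\equiv 0$ since $\beta S>0$. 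Thus $\lambda=(\lambda_1,\lambda_2,\lambda_3)\equiv 0$ on $B$, contradicting the nontriviality clause $\lambda^*(t)\neq 0$ of Theorem 3.3. Consequently $\phi_1,\phi_2$ have only isolated zeros, each optimal control is piecewise constant on $\{0,\mu_{ij\,\max}\}$ with switches precisely at the sign changes of its switching function, and the case formulas (3.17)--(3.20) follow directly from the sign analysis of the first step.

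The main obstacle I anticipate is exactly the implication $\lambda_2\equiv\lambda_3\equiv 0\Rightarrow\lambda_1\equiv 0$, made delicate by the \emph{advanced} terms $\lambda_2(t+\tau)$, $\lambda_2(t+\tau_1)$, $\lambda_3(t+\tau_1)$ appearing in the delay-type adjoint system (3.12): setting $\lambda_3=\lambda_3'=0$ on $B$ does not immediately kill $\lambda_1\beta S$, because those advanced terms are evaluated outside $B$ and need not vanish there. The cleanest way I see to close this gap is to run the argument first on a terminal subinterval $B\subset(T-\min(\tau,\tau_1),\,T]$, where both indicators $\chi_{[0,T-\tau]}$ and $\chi_{[0,T-\tau_1]}$ are identically zero, so that (3.12) collapses to the undelayed relation $\lambda_3'=\lambda_1\beta S+\lambda_3(\mu_1+y+\alpha\mu_{22})$; with $\lambda_3\equiv\lambda_3'\equiv 0$ and $\beta S>0$ this forces $\lambda_1\equiv 0$ without ambiguity, and one then propagates the vanishing backward through the earlier subintervals using the already-established relations. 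Once no singular arc can persist on any interval, the bang-bang characterization of Theorem 3.4 is established.
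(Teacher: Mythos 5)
Your proposal follows the paper's own proof essentially step for step: apply Pontryagin's Minimum Principle (Theorem 3.3), observe that $H$ is affine in $(\mu_{11},\mu_{12},\mu_{21},\mu_{22})$ so that pointwise minimization over the box $U$ pins each control at an extreme value according to the sign of its coefficient, introduce the switching functions $\phi_1=\lambda_2 I$ and $\phi_2=\lambda_3 V$, and exclude singular arcs by contradiction: $\phi\equiv 0$ on an interval $B$ forces $\lambda_2\equiv\lambda_3\equiv 0$ there (positivity of $I$, $V$), hence $\lambda_3'\equiv 0$, hence $\lambda_1\equiv 0$ from the third canonical equation, contradicting the nontriviality of the adjoint vector. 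Up to that point the two arguments coincide, and your sign bookkeeping agrees with equations (3.13)--(3.20).

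Where you depart from the paper is in flagging the advanced arguments, and this is a genuine defect of the paper's proof rather than of yours: the paper's step ``substituting $\lambda_3'(t)=0$ in the canonical equations (3.12) we find that $\lambda_1(t)=0$ in $B$'' is valid only if the terms $\chi_{[0,T-\tau]}(t)\lambda_2(t+\tau)\beta S(t)$ and $\chi_{[0,T-\tau_1]}(t)\lambda_3(t+\tau_1)\epsilon_1\mu_{21}(t)$ drop out, and the singular-arc hypothesis on $B$ says nothing about $\lambda_2$, $\lambda_3$ at the advanced times $t+\tau$, $t+\tau_1$, which in general lie outside $B$. Your repair --- running the contradiction on a terminal subinterval $B\subset(T-\min(\tau,\tau_1),T]$, where both indicator functions vanish and (3.12) is a genuine ODE --- is correct and disposes of any singular arc that reaches the terminal segment. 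However, your closing clause, that the vanishing can then be ``propagated backward through the earlier subintervals,'' is not worked out and does not obviously work: for an interior singular interval the advanced values sit to the \emph{right} of $B$ and are unconstrained by the hypothesis, and what the adjoint relations actually transmit is vanishing \emph{forward} (for instance, $\lambda_2\equiv\lambda_2'\equiv\lambda_3\equiv 0$ on $B$ yields $\lambda_2(t+\tau_1)\,\mu_{11}(t)=0$ on $B\cap[0,T-\tau_1]$, pushing information to $B+\tau_1$, not to the left). So interior singular arcs are excluded neither by the paper's argument nor by your patched version; the two proofs share this gap, with yours being the one that makes it visible.
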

 
 In the context of time optimal control problem $(3.8)-(3.11)$, we wish to reach the desired terminal state ( which is the infection free equilibrium state in our case) from a given initial state in minimal time. The following results gives the existence of optimal solution and the characterisation of optimal controls at the terminal time. In the following lemma we show that at the terminal time the optimal controls assumes its maximum value and later using this lemma we prove that the optimal control is maximum on the entire $[0,T]$ with some condition on the control variables. 
 
 For the sake of simplicity let us denote all the control variable by a vector $u(t)$ that is,  $u(t)=(\mu_{11}(t),\mu_{12}(t),\mu_{21}(t),\mu_{22}(t))$
 
 \begin{lem}
 Let $\dot V(T) = -y$ where, $y > 0$.  The time optimal control problem $(3.8)-(3.11)$ with $(S(T)=\frac{\omega}{\mu}, I(T)=0, V(T)=0)$ admits an optimal solution. Moreover $\textbf{u}_\text{opt}(T)= \textbf{u}_\text{max}$ provided $0<\lambda_3(T) < \frac{1}{y}$
 \begin{proof}
   From subsection 3.1.1, we know  that 	the right hand side of the system $(3.9)-(3.11)$ satisfies Lipschitz condition with respect to state variables and also the solutions of system $(3.9)-(3.11)$ are positive and bounded. Therefore, solution exists for the system $(3.9)-(3.11)$ for each bounded control variables \cite{makarov2013picard}.  This implies that the admissible solution set $\Omega \neq \phi$ and thus using existence theorem we conclude that the optimal control problem $(3.8)-(3.11)$ with $(S(T)=\frac{\omega}{\mu}, I(T)=0, V(T)=0)$ admits an optimum. 
     
     Now from condition 2 of Pontryagin's Minimum Principle (Theorem 3.3) we have,
     $$H(S(T),I(T),V(T),u^*, \lambda(T))=0$$
     substituting the definition of $H$ in the above equation we get,
     \begin{equation}
         \lambda_2(T) \frac{d I}{d t} +\lambda_3(T) \frac{d V}{d t} =-1
     \end{equation}
     since the initial values of $I(t)$ and $V(t)$ are both positive and  $I(T)$ and $V(T)$ are both zero, we have ${\dot I(T) < 0}$ and $\dot V(T) < 0$. Now from equation $(3.21)$ we get,
     $$\lambda_2(T)=\frac{-1-\lambda_3(T) \dot V(T)}{\dot I(T)}$$
    Let ${\dot I(T) =-x}$ and ${\dot V(T) =-y}$ where $x >0$ and $y > 0$. Assuming  $0<\lambda_3(T) < \frac{1}{y}$ we have $\lambda_2(T) > 0$. Hence at the terminal time we have $\textbf{u}_ \text{opt}(T)= \textbf{u}_\text{max}$.
 \end{proof}
 \end{lem}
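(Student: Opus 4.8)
The plan is to prove the two assertions in turn: existence of a time-optimal solution, and then the terminal identity $\textbf{u}_\text{opt}(T)=\textbf{u}_\text{max}$. For existence I would reuse the machinery of subsection 3.1.1: the Lipschitz bound on the right-hand side $G$ of $(3.9)$--$(3.11)$ together with the positivity and boundedness of trajectories lets the Picard--Lindel\"of theorem produce, for every admissible $u \in U$, a unique solution on $[0,T]$, so $\Omega \neq \phi$. With the set $A$ compact, $U$ compact and convex, and the boundary data $B$ fixed (so the objective $J=T$ is trivially continuous there), the three hypotheses verified for Theorem 3.2 are met and Filippov's theorem delivers an optimal control steering $(S_0,I_0,V_0)$ to $(\omega/\mu,0,0)$ in least time.

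For the terminal value of the controls I would start from condition 2 of Theorem 3.3, $H(X^*,u^*,\lambda^*)=0$, evaluated at $t=T$. The crucial structural fact is that the prescribed endpoint $(\omega/\mu,0,0)$ is exactly the disease-free equilibrium $E_0$, so the susceptible drift vanishes there,
\[
\dot S(T)=\omega-\beta S(T)V(T)-\mu S(T)=\omega-\mu\,\tfrac{\omega}{\mu}=0,
\]
which annihilates the $\lambda_1$ contribution and collapses the Hamiltonian identity to the scalar relation $1+\lambda_2(T)\dot I(T)+\lambda_3(T)\dot V(T)=0$, i.e.\ equation $(3.21)$. Writing $\dot I(T)=-x$ and $\dot V(T)=-y$ with $x,y>0$ --- legitimate because $I,V$ enter positive and are driven down to zero at $T$ --- and solving gives $\lambda_2(T)=(1-\lambda_3(T)y)/x$, so the hypothesis $0<\lambda_3(T)<1/y$ forces $\lambda_2(T)>0$.

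With $\lambda_2(T)>0$ and $\lambda_3(T)>0$ in hand I would read the control values off the bang-bang characterization of Theorem 3.4. The switching functions governing the first-line controls $\mu_{11}^*$ and $\mu_{21}^*$ carry the delayed factors $I(T-\tau_1)$ and $V(T-\tau_1)$, which are strictly positive, so both functions are positive at $T$ and the two controls sit at their maxima. The second-line switching functions carry the current-time factors $I(T)=V(T)=0$ and hence vanish exactly at $T$; here I would argue by continuity, using $I(t),V(t)>0$ on $[0,T)$ and the continuity of $\lambda_2,\lambda_3$, that these switching functions stay positive on a left-neighborhood of $T$, so that $\mu_{12}^*,\mu_{22}^*$ also equal their maxima as $t\to T^-$. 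Collecting the four components yields $\textbf{u}_\text{opt}(T)=\textbf{u}_\text{max}$.

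I expect the only real subtlety to be terminal bookkeeping rather than deep analysis. The step that matters is recognizing that the equilibrium endpoint kills $\dot S(T)$, which is precisely what turns $H=0$ into the clean scalar equation $(3.21)$ and lets the sign of $\lambda_2(T)$ be pinned down by $\lambda_3(T)$ alone; the secondary wrinkle is that two of the four switching functions are identically zero at $T$ (because $I(T)=V(T)=0$), which the left-neighborhood continuity argument above is designed to settle.
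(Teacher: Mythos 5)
Your proposal is correct and follows essentially the same route as the paper: existence via the already-verified Filippov conditions (Theorem 3.2), then condition 2 of Pontryagin's Minimum Principle, $H=0$ at $t=T$, collapsed to equation $(3.21)$ and solved to pin down the sign of $\lambda_2(T)$ from the hypothesis $0<\lambda_3(T)<\frac{1}{y}$. You are in fact more careful than the paper on two points it glosses over: you justify dropping the $\lambda_1\dot S(T)$ term by observing that $\dot S(T)=0$ at the equilibrium endpoint, and you handle the fact that the undelayed switching functions $\lambda_2(T)I(T)$ and $\lambda_3(T)V(T)$ vanish at $T$ (so $\mu_{12}^*,\mu_{22}^*$ are a priori indeterminate there) via a left-neighborhood continuity argument, whereas the paper simply asserts $\textbf{u}_\text{opt}(T)=\textbf{u}_\text{max}$ and only remarks after the lemma that the control values at the terminal instant are ``not significant.''
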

 
  From the definition of the switching function we have, $\phi =(\phi_1(t),\phi_2(t))=(\lambda_2(t)I(t), \lambda_3(t) V(t))$. At $t=T$ we have $(\phi_1(T),\phi_2(T))=(0,0)$ because $(I(T),V(T))=(0,0)$. Since the final state has been already reached at $t=T$,  the values of the optimal controls  are not significant at this time. From equation $(3.11)$ we see that at $t=T$ we have,
  $$\dot V(T)= - \epsilon_1 \mu_{21}(T-\tau_1) I(T-\tau_1)= -y$$
   With this $\dot V(T)$,  the condition  $0<\lambda_3(T) < \frac{1}{y}$ of the above lemma 3.1 can be re-written as $$0 < \mu_{21}(T-\tau_1) < \frac{1}{\epsilon_1 \lambda_3(T) I(T-\tau_1)}$$
  
  Therefore,  $\textbf{u}_\text{opt}(T)= \textbf{u}_\text{max}$ if the control $\mu_{21}(T-\tau_1)$ is chosen such that $$\mu_{21}(T-\tau_1) < \frac{1}{\epsilon_1 \lambda_3(T) I(T-\tau_1)}$$.

  Now using the above lemma we prove the following result which gives the stronger property of the optimal strategy.
  
  \begin{thm}
  Let lemma 3.1 hold. If $\mu_{12}^{\text{max}} > \frac{\lambda_3(T)-\lambda_2(T)(\mu+x)}{\alpha \lambda_2(T)}$ then $\textbf{u}_\text{opt}(T)= \textbf{u}_ \text{max}$ for all $t$ in $[0, T]$
  \begin{proof}
  Let us assume that lemma 3.1 holds and $\mu_{12}^{\text{max}} > \frac{\lambda_3(T)-\lambda_2(T)(\mu+x)}{\alpha \lambda_2(T)}$
  
  Now from the canonical equations $(3.12)$ we have,
 \begin{align*}
\frac{d\lambda_1}{dt} \bigg|_{t=T} &= \lambda_1(T) \mu  \\\\
\frac{d\lambda_2(t)}{dt} \bigg|_{t=T} &= \lambda_2(T)(\mu + x + \alpha \mu_{12}(T)) - \lambda_3(T) b \\ \\
\frac{d\lambda_3(t)}{dt} \bigg|_{t=T} &= \lambda_1(T)\beta S(T) + \lambda_3(T)(\alpha \mu_{22}(T)+ y+ \mu)
\end{align*}
  Now since $\lambda_1(T)$ is arbitrary from lemma 3.1. Let   $\lambda_1(T) > 0$. We also know that $ \lambda_3(T) > 0$, therefore,
  $$
  \frac{d\lambda_3(t)}{dt} \bigg|_{t=T} > 0$$
  Using the continuity of $\lambda_3(t)$, there exists $\epsilon > 0$ such that $\lambda_3(t) > 0$ for $t \in [T-\epsilon, T]$. As long as $\lambda_1(T) > 0$, we have $0 < \lambda_3(t) <\lambda_3(T) $ for $t \in [s, T]$. Similarly since $\mu_{12}^{\text{max}} > \frac{\lambda_3(T)-\lambda_2(T)(\mu+x)}{\alpha \lambda_2(T)}$ we have,
  $$
  \frac{d\lambda_2(t)}{dt} \bigg|_{t=T} > 0$$
  Using the continuity of $\lambda_2(t)$,  we have $0 < \lambda_2(t) <\lambda_2(T) $ for $t \in [s, T]$. Therefore, in the interval $[s,T]$ we have $\textbf{u}_\text{opt}(T)= \textbf{u}_\text{max}$.
  
  Now we will show that $s=0$. Let us assume that $s \neq 0$ and $\lambda_1(s)=0$ for $0<s<T$. Since $\lambda_1(T) > 0$, using the continuity of $\lambda_1(t)$ over $[0, T]$ we get,
   $$
  \frac{d\lambda_1(t)}{dt} \bigg|_{t=s} > 0$$
  
  But from the canonical equation (3.12) we have at $t=s$
  $$
  \frac{d\lambda_1(t)}{dt} \bigg|_{t=s} = -\chi_{[0, T-\tau]} \lambda_2(s+\tau_1) \beta V(s)$$
  Since $\lambda_2(t) > 0$ and $V(t)>0$ on $[s, T]$ we get
 $$ \frac{d\lambda_1(t)}{dt} \bigg|_{t=s} < 0$$
   This leads to a contradiction. Hence $s=0$ and $\textbf{u}_\text{opt}(T)= \textbf{u}_\text{max}$ on [0, T].
  \end{proof}
  \end{thm}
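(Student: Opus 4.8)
The plan is to convert the desired conclusion $\textbf{u}_\text{opt}=\textbf{u}_\text{max}$ into a sign condition on the adjoint variables and then propagate that condition backward from $t=T$ to $t=0$. By the bang--bang characterization (Theorem 3.4) the four controls sit at their upper bounds precisely when the switching functions $\phi_1(t)=\lambda_2(t)I(t)$ and $\phi_2(t)=\lambda_3(t)V(t)$ are positive; since positivity of the states gives $I(t)>0$ and $V(t)>0$ on $[0,T]$, the whole theorem reduces to showing $\lambda_2(t)>0$ and $\lambda_3(t)>0$ for every $t\in[0,T]$. Thus my target becomes purely a statement about the adjoint system $(3.12)$.

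First I would read off the behavior of $(3.12)$ at the terminal time. Because $\tau,\tau_1>0$, both indicators $\chi_{[0,T-\tau]}$ and $\chi_{[0,T-\tau_1]}$ vanish at $t=T$, and the terminal constraint $V(T)=0$ removes the $\beta V$ terms, so $(3.12)$ collapses to simple expressions for $\dot\lambda_1(T),\dot\lambda_2(T),\dot\lambda_3(T)$. Lemma 3.1 already supplies $\lambda_2(T)>0$, $\lambda_3(T)>0$ and $\mu_{12}(T)=\mu_{12}^{\text{max}}$, and leaves $\lambda_1(T)$ free, so I would fix $\lambda_1(T)>0$. Then $\dot\lambda_3(T)=\lambda_1(T)\beta S(T)+\lambda_3(T)(\mu_1+y+\alpha\mu_{22}(T))>0$ is immediate, while the hypothesis $\mu_{12}^{\text{max}}>\frac{\lambda_3(T)-\lambda_2(T)(\mu+x)}{\alpha\lambda_2(T)}$ is tailored so that, after substituting $\mu_{12}(T)=\mu_{12}^{\text{max}}$, the right-hand side of the $\lambda_2$-equation is positive, i.e.\ $\dot\lambda_2(T)>0$. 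A continuity argument on the piecewise-$C^1$ adjoint then yields a maximal interval $[s,T]$ on which $\lambda_1,\lambda_2,\lambda_3$ are simultaneously positive, so $\textbf{u}_\text{opt}=\textbf{u}_\text{max}$ holds at least on $[s,T]$.

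The decisive step is to rule out $s>0$. I would argue by contradiction: if $s>0$ then $s$ is the first zero of $\lambda_1$ encountered as we move left from $T$, so continuity forces $\lambda_1(s)=0$ with $\lambda_1>0$ on $(s,T]$; hence $\lambda_1$ passes upward through $0$ at $s$ and $\dot\lambda_1(s)\ge 0$. On the other hand, evaluating the first line of $(3.12)$ at $t=s$ with $\lambda_1(s)=0$ leaves only $\dot\lambda_1(s)=-\chi_{[0,T-\tau]}(s)\,\lambda_2(s+\tau)\beta V(s)$, which is strictly negative because $s+\tau\in[s,T]$ makes $\lambda_2(s+\tau)>0$ while $V(s)>0$. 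This contradiction forces $s=0$, and positivity of $\lambda_2,\lambda_3$ on all of $[0,T]$ delivers the claim.

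The step I expect to be the main obstacle is making this last contradiction fully rigorous in the presence of the delay. The clean negative sign of $\dot\lambda_1(s)$ relies on the indicator $\chi_{[0,T-\tau]}(s)$ being active, i.e.\ on $s\le T-\tau$, and on $\lambda_2$ being already known positive at the shifted argument $s+\tau$; I would therefore verify that the maximal interval $[s,T]$ is wide enough that $s+\tau$ stays inside it, and handle the short terminal window $s>T-\tau$ (where the indicator switches off and the estimate only gives $\dot\lambda_1(s)=0$) separately, using that on that window the already-established signs of $\dot\lambda_2,\dot\lambda_3$ keep $\lambda_1$ from vanishing. Everything outside this point is routine continuity bookkeeping on the adjoint ODEs $(3.12)$.
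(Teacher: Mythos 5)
Your proposal follows essentially the same route as the paper's own proof: evaluate the adjoint system $(3.12)$ at $t=T$ (where the indicators and $V(T)=0$ kill the extra terms), use Lemma 3.1 plus the hypothesis on $\mu_{12}^{\text{max}}$ to get $\dot\lambda_2(T)>0$ and $\dot\lambda_3(T)>0$ with $\lambda_1(T)>0$ chosen, extend positivity backward by continuity to a maximal interval $[s,T]$, and rule out $s>0$ by the contradiction $\dot\lambda_1(s)=-\chi_{[0,T-\tau]}(s)\,\lambda_2(s+\tau)\beta V(s)<0$ against the upward crossing of $\lambda_1$ at $s$. If anything you are more careful than the paper: you correctly write $\lambda_2(s+\tau)$ (the paper has a typo $\lambda_2(s+\tau_1)$) and you flag the terminal window $s>T-\tau$ where the indicator vanishes and the contradiction degenerates (which the paper silently ignores), although, like the paper, you still assume without justification that it is $\lambda_1$ rather than $\lambda_2$ or $\lambda_3$ that vanishes first at $s$.
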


  \subsubsection{\textbf{Numerical Simulation of Time Optimal Control Problem}}
  In this section we  numerically illustrate the theory discussed in the previous sections and this is done using MATLAB software. The figures in this section display the optimal trajectory of the state variables, switching functions, co-state variables and the optimal control functions. The initial and the terminal state are fixed and using parameter values from table 4 and the terminal state of the system $(3.9)-(3.11)$ we obtain the optimal value of the control at the terminal state  $u^*(T)=(\mu_{11}^*(T),\mu_{12}^*(T),\mu_{21}^*(T),\mu_{22}^*(T))$. Using this we fix the range of the control variable  $[0, u_{max}]$. Then with the objective of reaching the terminal state from the given initial state, we obtain the initial values of the co-state variables with various combination of trial-error guesses by ensuring that the Hamiltonian function $H=0$ along the optimal trajectory. The Runge-kutta method of $4^{\text {th}}$ order is used to simulate the system $(3.9)-(3.11)$ and the canonical equation $(3.12)$. The optimal control value  switches between $0$ and maximum value depending on the sign of the switching functions given in $(3.17)-(3.20)$. The time to reach the terminal state is calculated for each of the cases considered. The Hamiltonian function is monitored throughout the process. The step size for the simulation is taken to be $h=0.01$ and based on this the time to reach the desired infection free equilibrium state is re-scaled by multiplying $T \times 10^{-2}$. \\

  In  figure (7, 8) we plot the optimal trajectory of the state variables, switching functions, co-state variables and the optimal control functions over time. The figure depicts the possibility of driving the system $(3.9)-(3.11)$ from given initial state $(1000, 80, 60)$ to the desired infection free equilibrium state $(20 , 0,0)$. The minimum values  of the controls $(\mu_{11}^*, \mu_{12}^* ,\mu_{21}^*, \mu_{22}^*, )$ are taken as zero and the maximum values are taken as $2$ and the values of other fixed parameters for the simulation is taken from table 4.  The final time taken by the system $(3.9)-(3.11)$ to reach the desired terminal state is calculated to be  $T=1947\times 10^{-2} = 19.47$ units of time. This example  is a case with  one switch. It is observed that  optimal control functions $\mu_{11}^*$ and $\mu_{12}^*$ switches its values from maximum (i.e. $2$) to minimum (i.e. $0$) whereas,  the optimal control functions $\mu_{21}^*$ and $\mu_{22}^*$ switches its values from  minimum to maximum. The nature of these switches in the values of optimal controls depends on the sign of the switch function $\phi_1$ and $\phi_2$ as discussed in theorem 3.4. This example is applicable in the context of eliminating the virus particle from the body and we see from the figure that the infected cell increases initially  for a certain period of time owing to the presence of virus particles and then decreases and comes down to zero as viral load decreases with the administration of higher quantity of the controls.  
  
 \newpage	
\begin{figure}[hbt!]
\begin{center}
\includegraphics[width=3.1in, height=2.5in, angle=0]{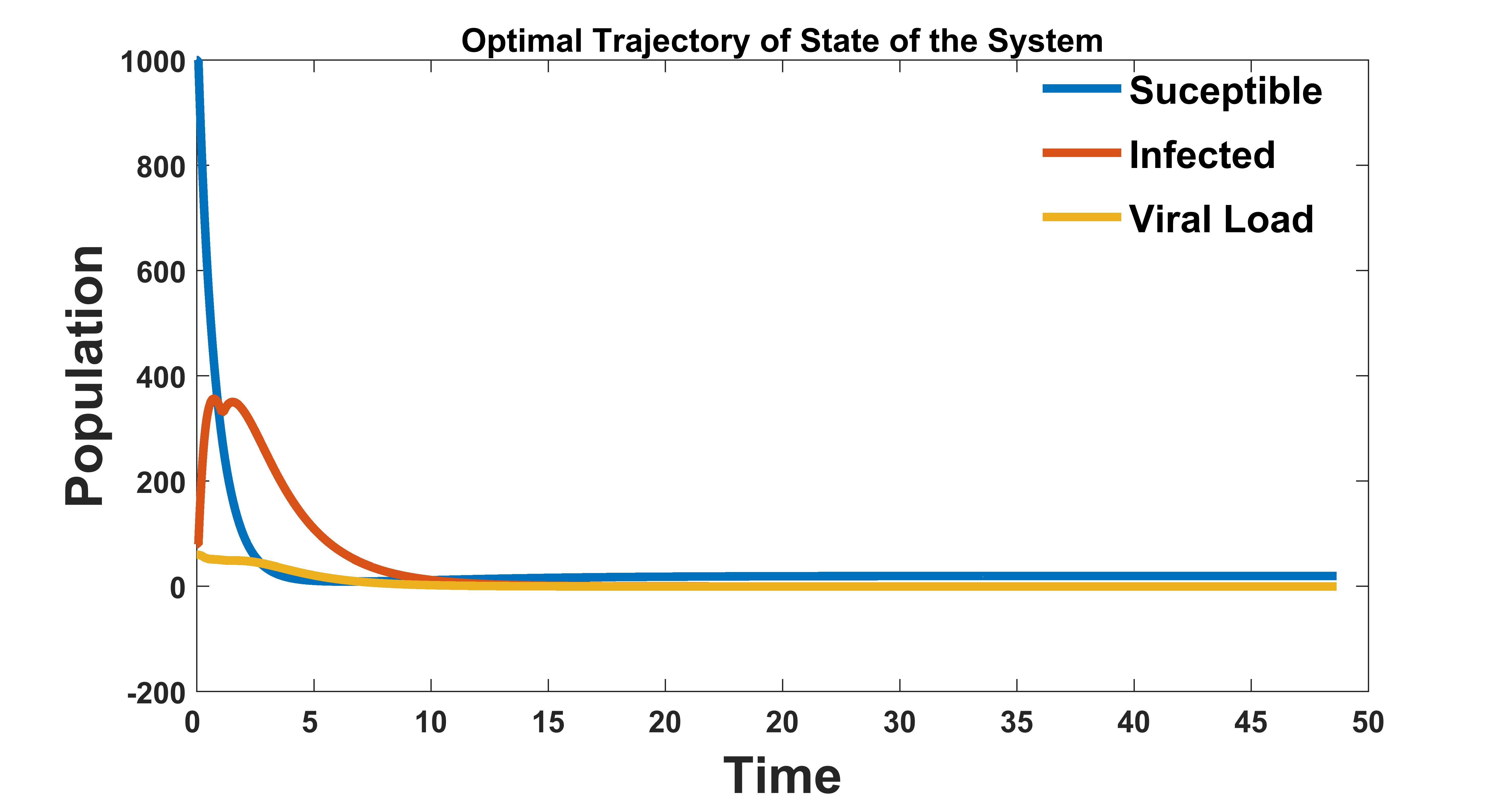}
\hspace{-.4cm}
\includegraphics[width=3.1in, height=2.5in, angle=0]{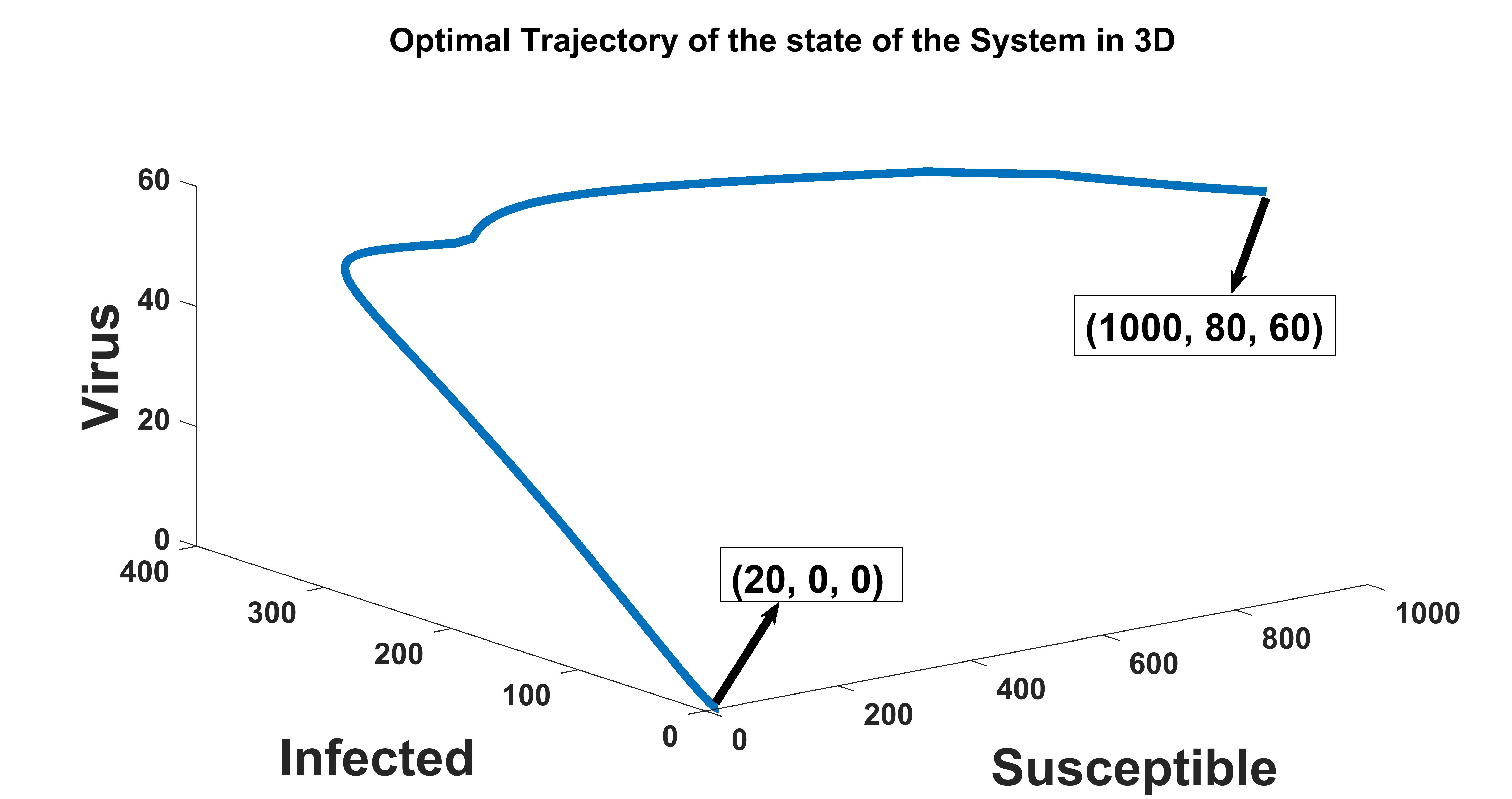}

\end{center}
\end{figure}

\vspace{1cm}

\begin{figure}[hbt!]
\begin{center}
\includegraphics[width=3.1in, height=2.5in, angle=0]{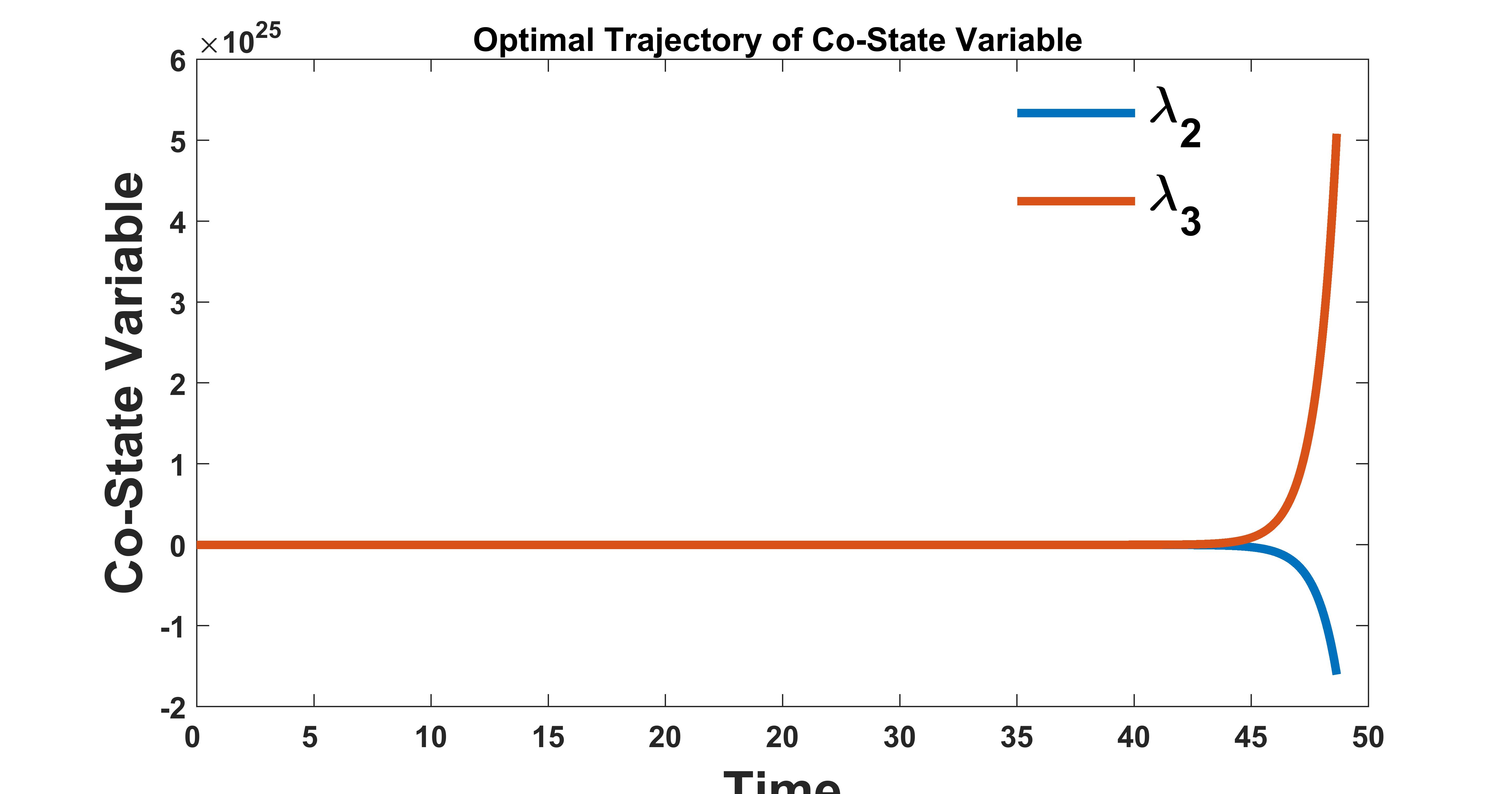}
\hspace{-.4cm}
\includegraphics[width=3.1in, height=2.5in, angle=0]{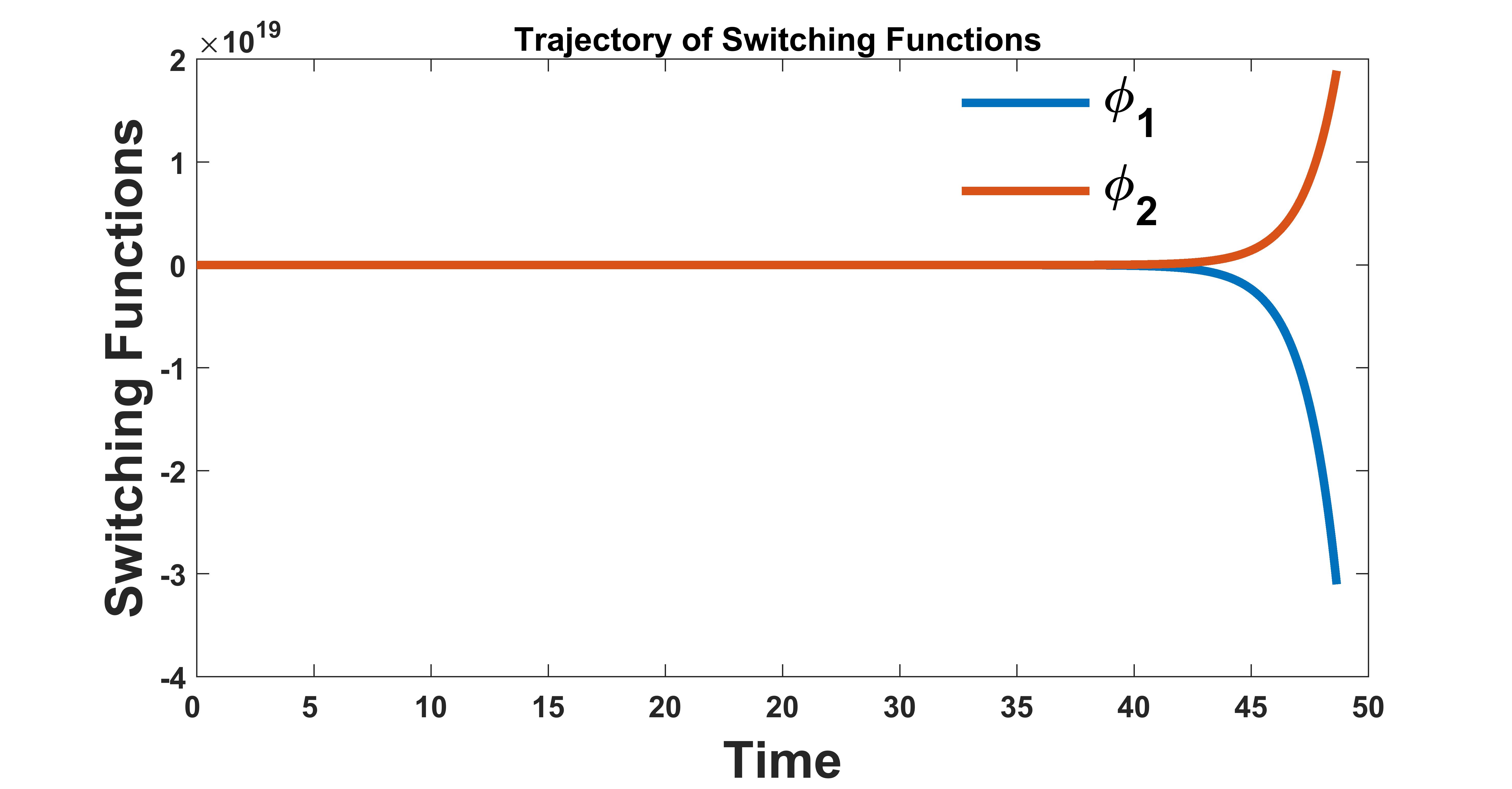}

\vspace{7mm}
\caption{Figure depicting the optimal trajectory of the time optimal control problem (3.8)-(3.11) from the initial state $(1000, 80, 60)$ to the desired terminal state $E_0 = (20, 0,0)$  with the parameter values from table 4.}
\label{1}

\end{center}
\end{figure}

\vspace{20cm}
\begin{figure}[hbt!]
\begin{center}
\includegraphics[width=3.1in, height=2.5in, angle=0]{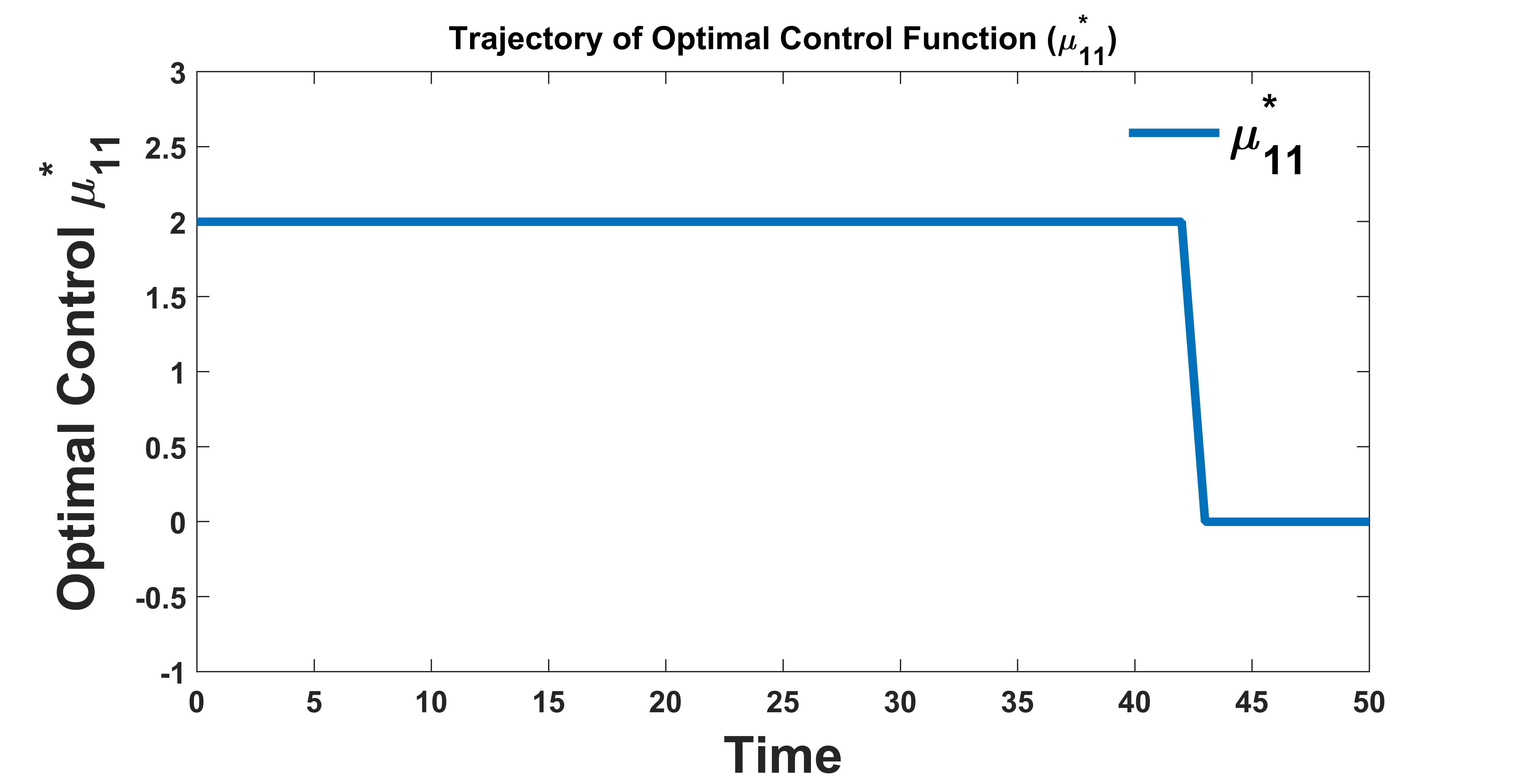}
\hspace{-.395cm}
\includegraphics[width=3.1in, height=2.5in, angle=0]{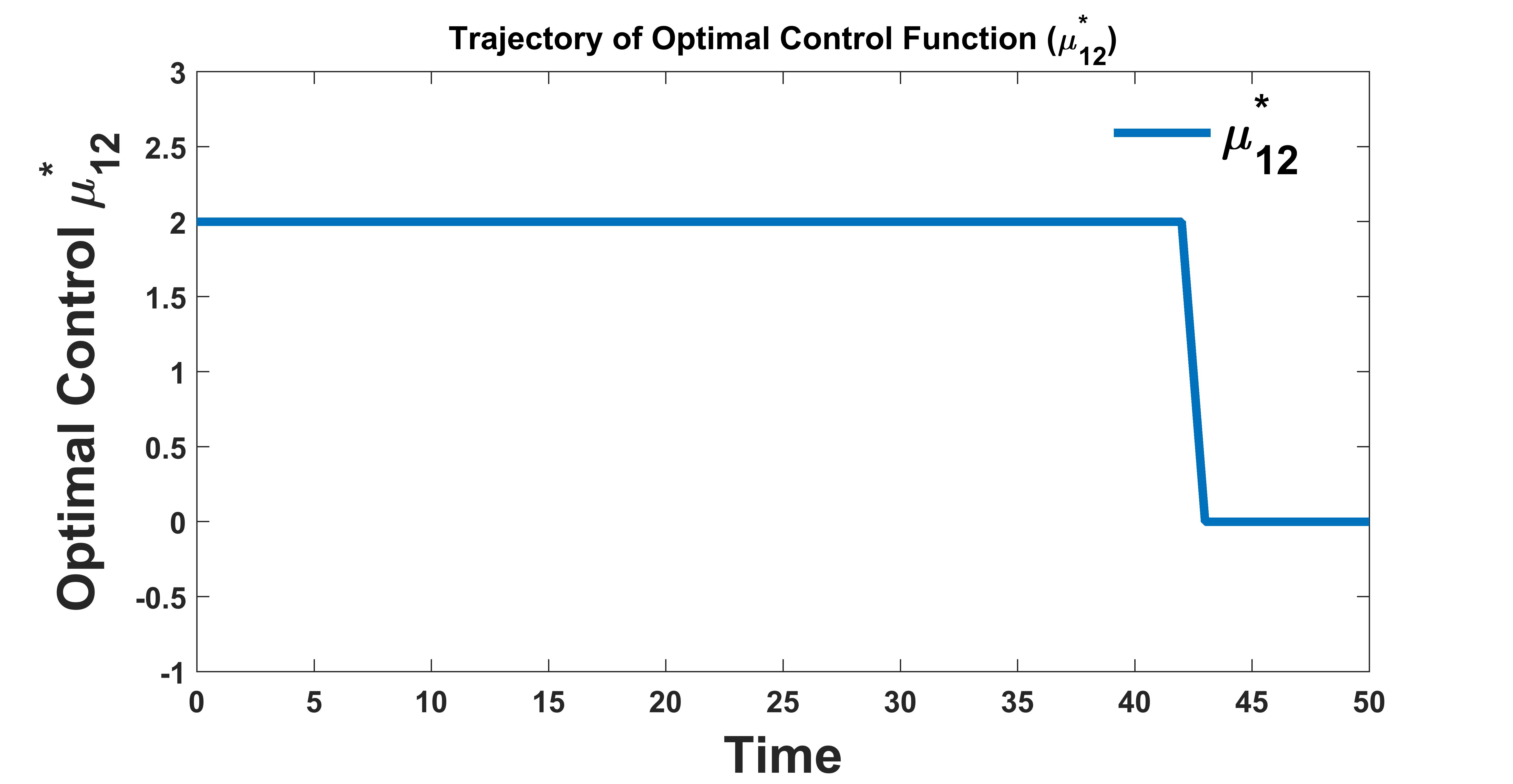}

\end{center}
\end{figure}

\vspace{-3mm}

\begin{figure}[hbt!]
\begin{center}
\includegraphics[width=3.1in, height=2.5in, angle=0]{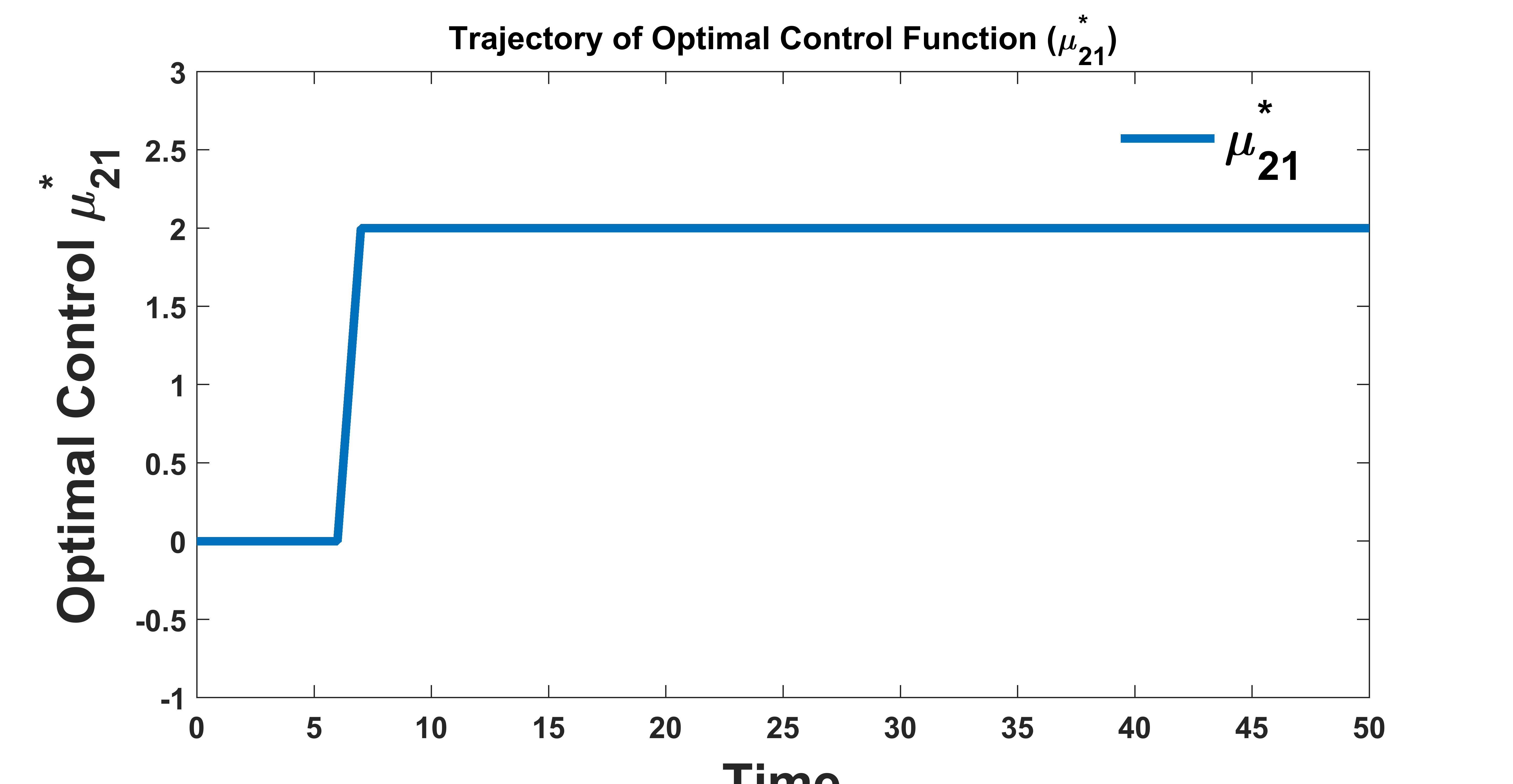}
\hspace{-.4cm}
\includegraphics[width=3.1in, height=2.5in, angle=0]{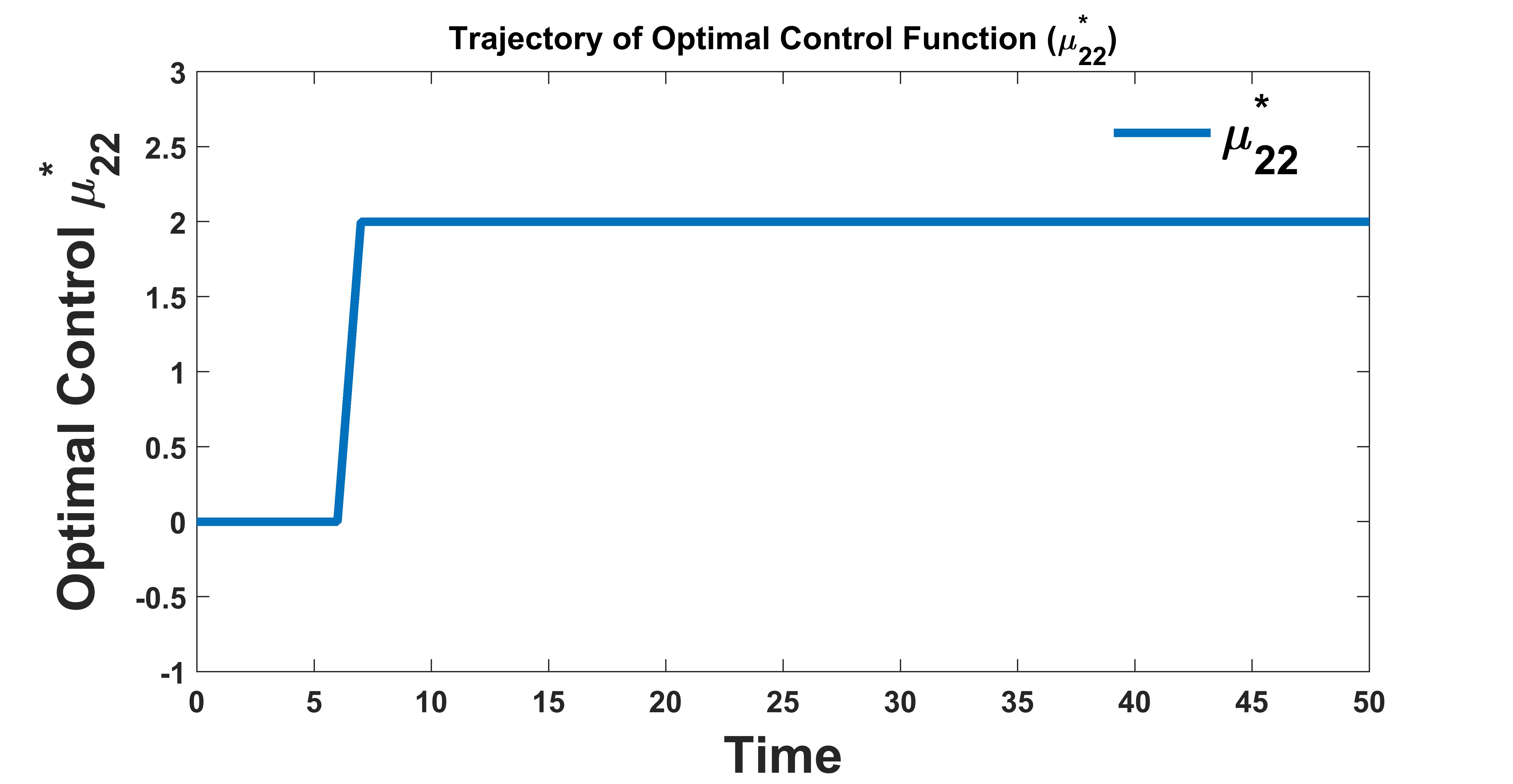}

\vspace{7mm}
\caption{Figure depicting the  trajectory of optimal control functions of the time optimal control problem (3.8)-(3.11).}
\label{1}
\end{center}
\end{figure}

In  figure (9, 10) we plot the optimal trajectory of the state variables, switching functions, co-state variables and the optimal control functions over time. The figure depicts the possibility of driving the system $(3.9)-(3.11)$ from given initial state $(1000, 80, 60)$ to the desired infection free equilibrium state $(20 , 0,0)$. The minimum values  of the controls $(\mu_{11}^*, \mu_{12}^* ,\mu_{21}^*, \mu_{22}^*, )$ are taken as zero, same as in the previous example but  the maximum values  are now  taken to be $2.5$ and the values of other fixed parameters are from table 4. This example is a case without any switch in the values of optimal controls. This case is an illustration of theorem 3.5 where, $\lambda_2(t) > 0$ and $\lambda_3(t) > 0$ $\forall t \in [0, T]$ and $\mu_{11}^*(t)= \mu_{\text{max}}$, $\mu_{21}^*(t)= \mu_{\text{max}}$, $\mu_{12}^*(t)= \mu_{\text{max}}$ and $\mu_{22}^*(t)= \mu_{\text{max}}$ $\forall t \in [0, T]$. In this case the time to reach the desired infection free equilibrium state is calculated to be $T=1606\times 10^{-2} = 16.06$ units of time which is lesser than the previous example. This example illustrates that the infection free equilibrium state could be achieved by the system in much lesser time maintaining the administration of the controls at maximum levels throughout  the observation period.

\newpage
\begin{figure}[hbt!]
\begin{center}
\includegraphics[width=3.1in, height=2.6in, angle=0]{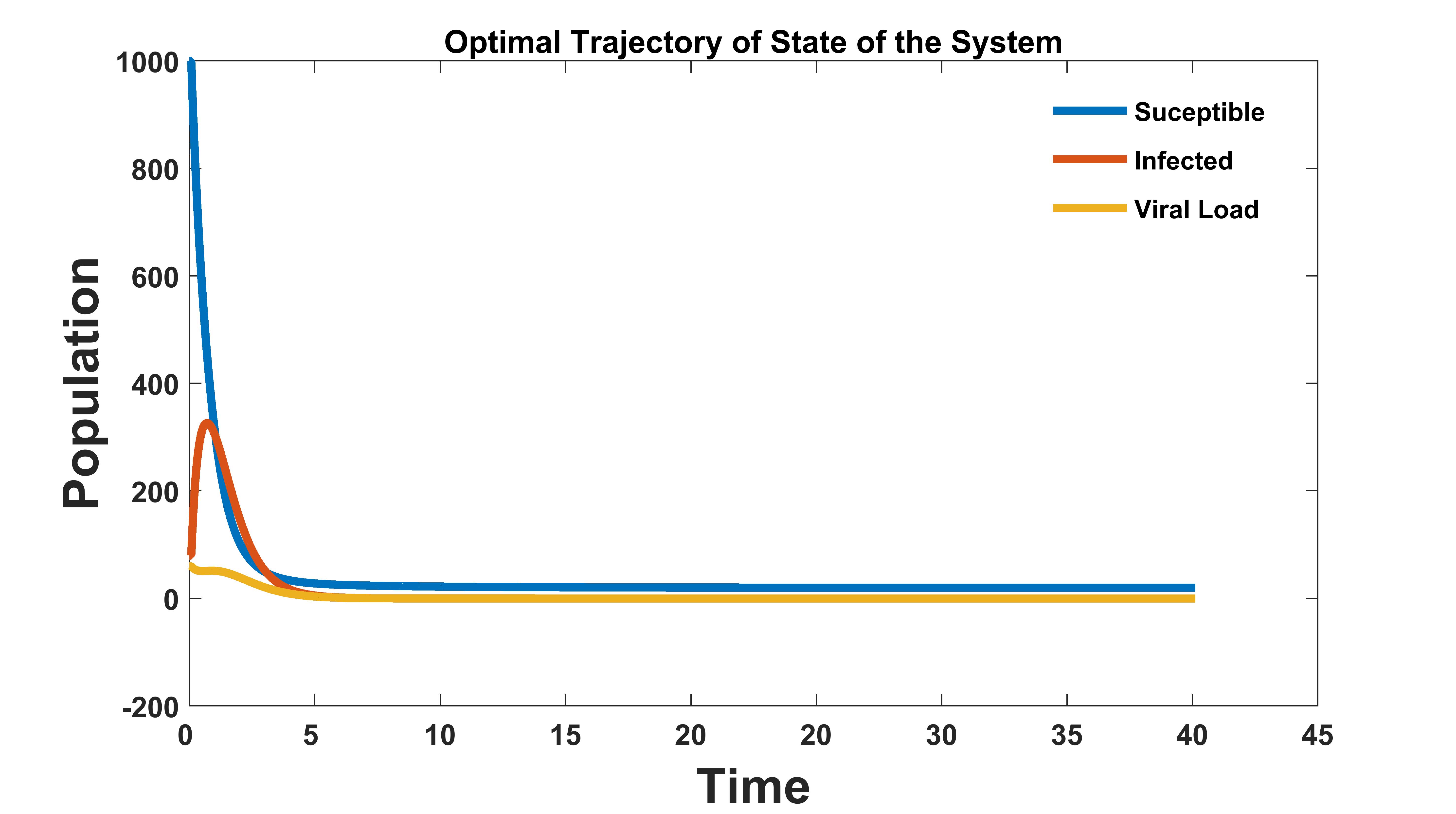}
\hspace{-.4cm}
\includegraphics[width=3.1in, height=2.6in, angle=0]{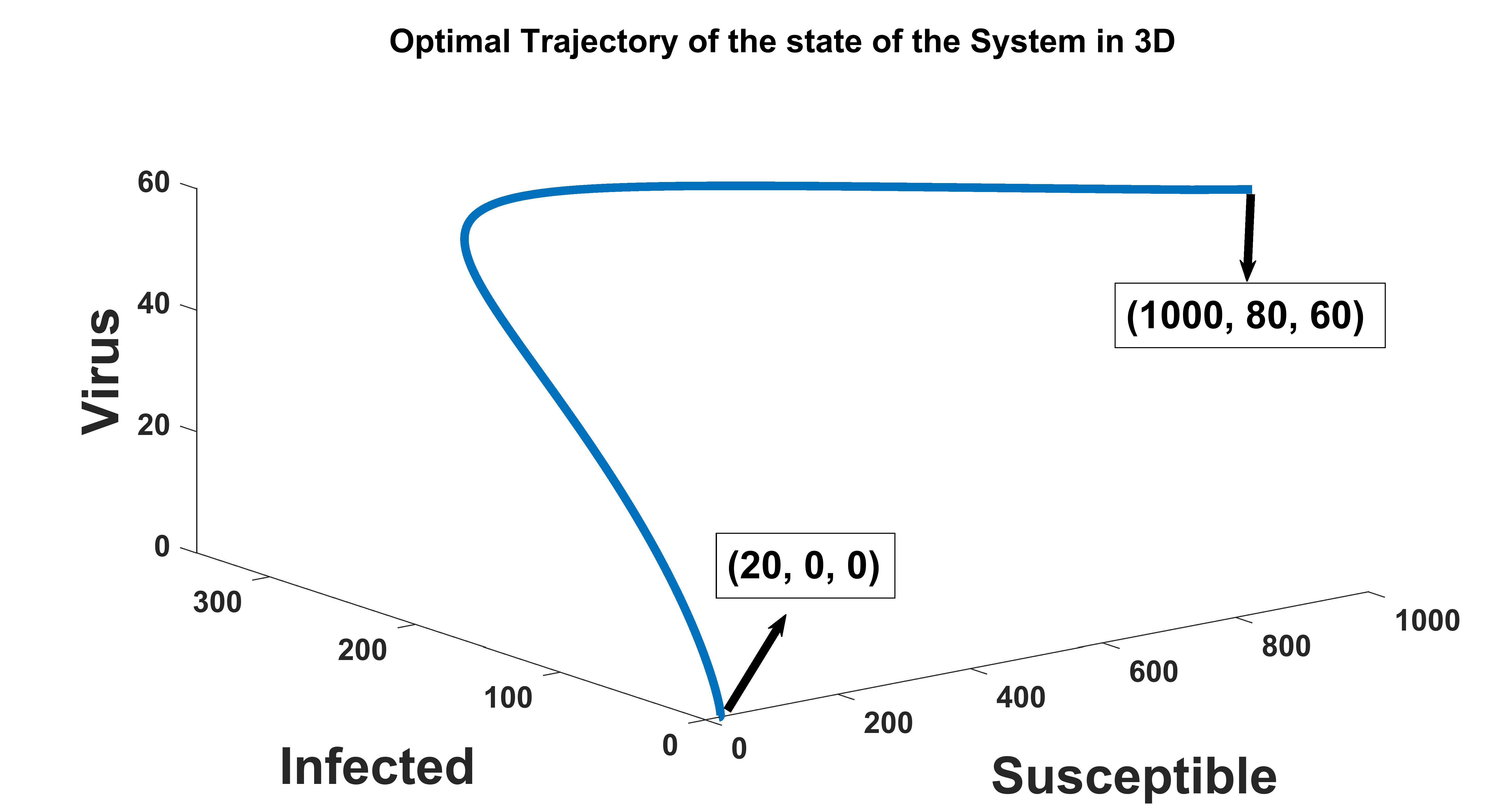}

\end{center}
\end{figure}

\vspace{1cm}

\begin{figure}[hbt!]
\begin{center}
\includegraphics[width=3.1in, height=2.6in, angle=0]{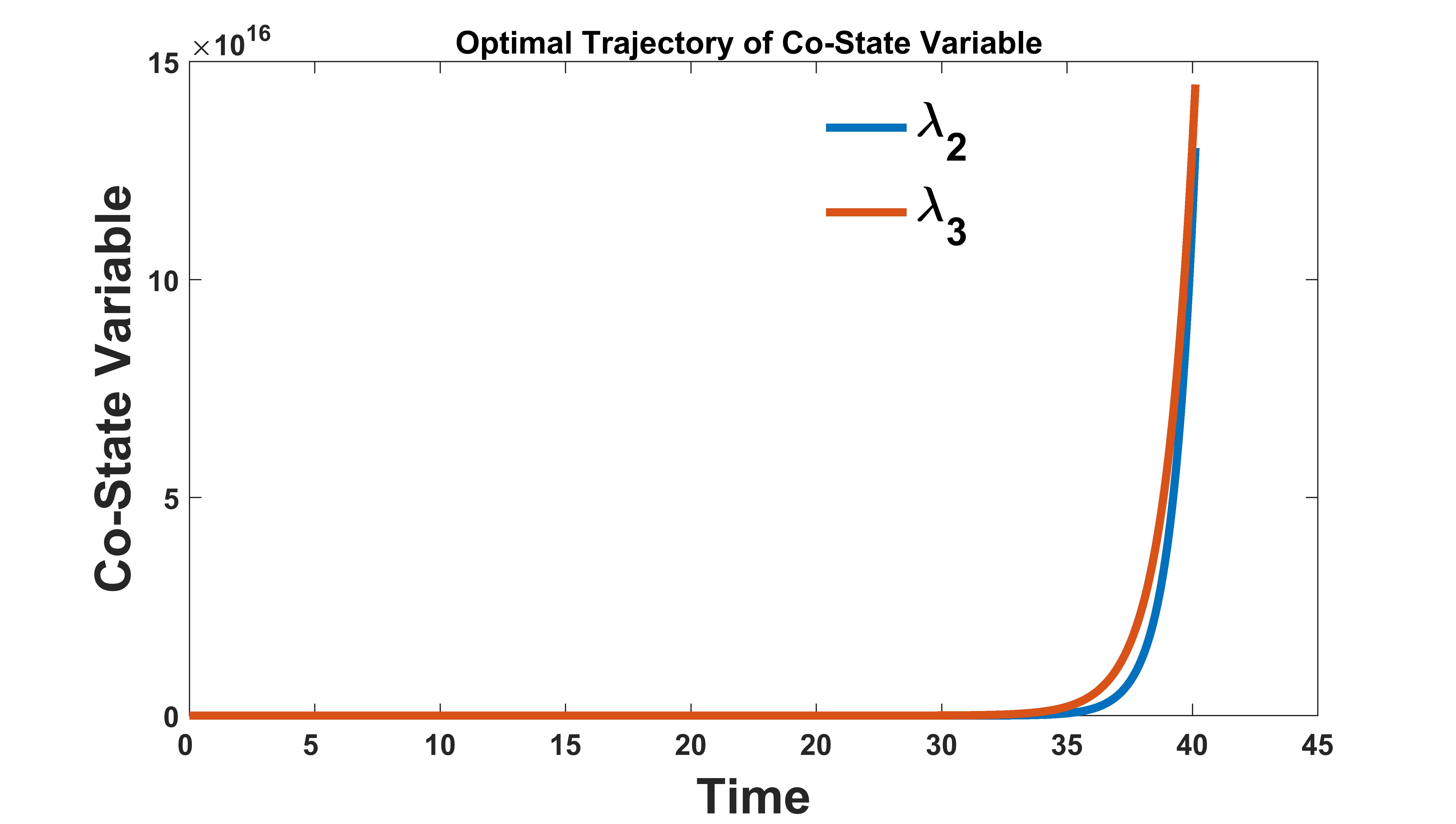}
\hspace{-.4cm}
\includegraphics[width=3.1in, height=2.6in, angle=0]{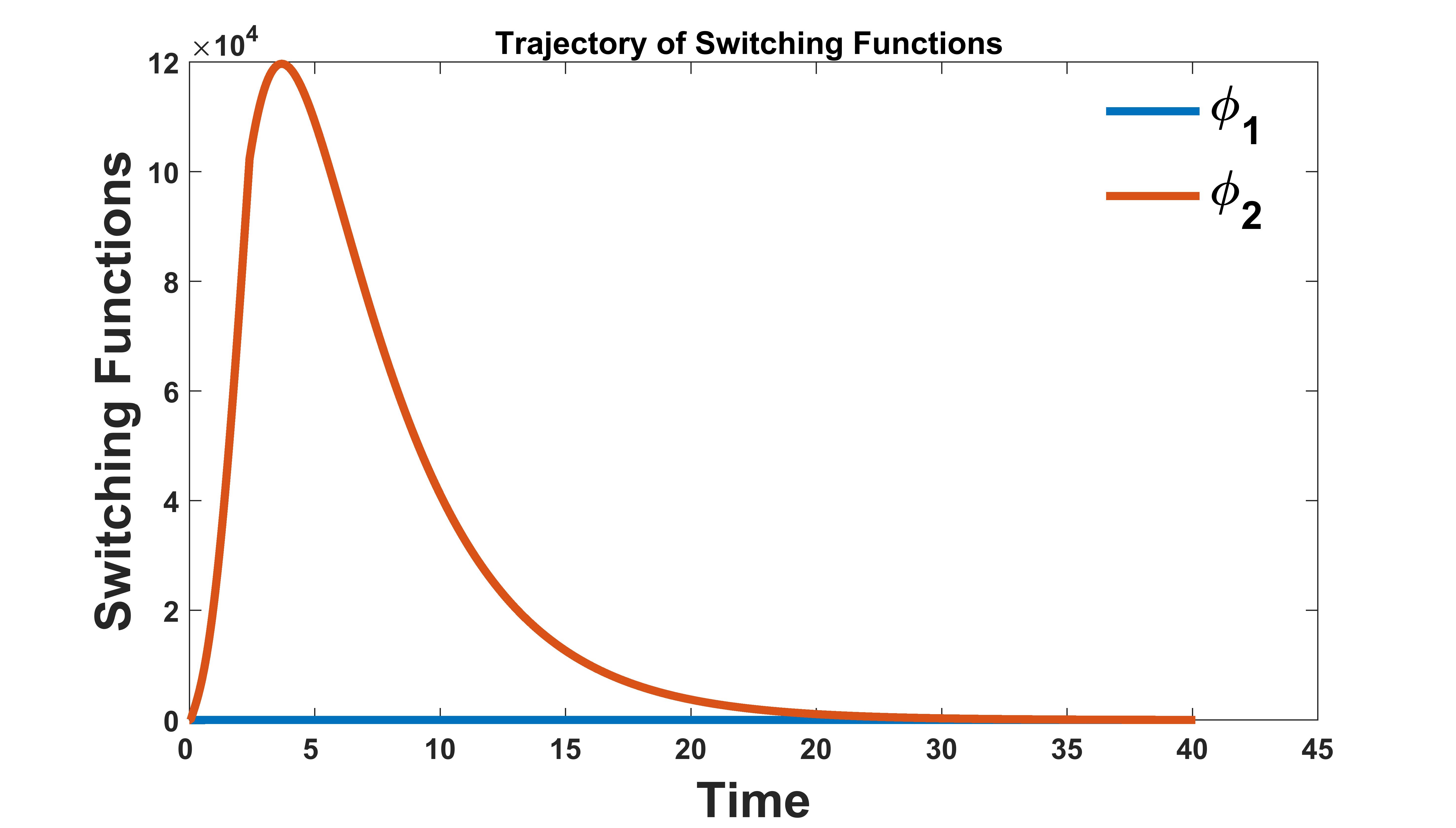}

\vspace{7mm}
\caption{Figure depicting the optimal trajectory of the time optimal control problem $(3.8)-(3.11)$ from the initial state $(1000, 80, 60)$ to the desired terminal state $E_0 = (20, 0,0)$  with the parameter values from table 4.}
\end{center}
\end{figure}

\vspace{10cm}
\begin{figure}[hbt!]
\begin{center}
\includegraphics[width=3.1in, height=2.6in, angle=0]{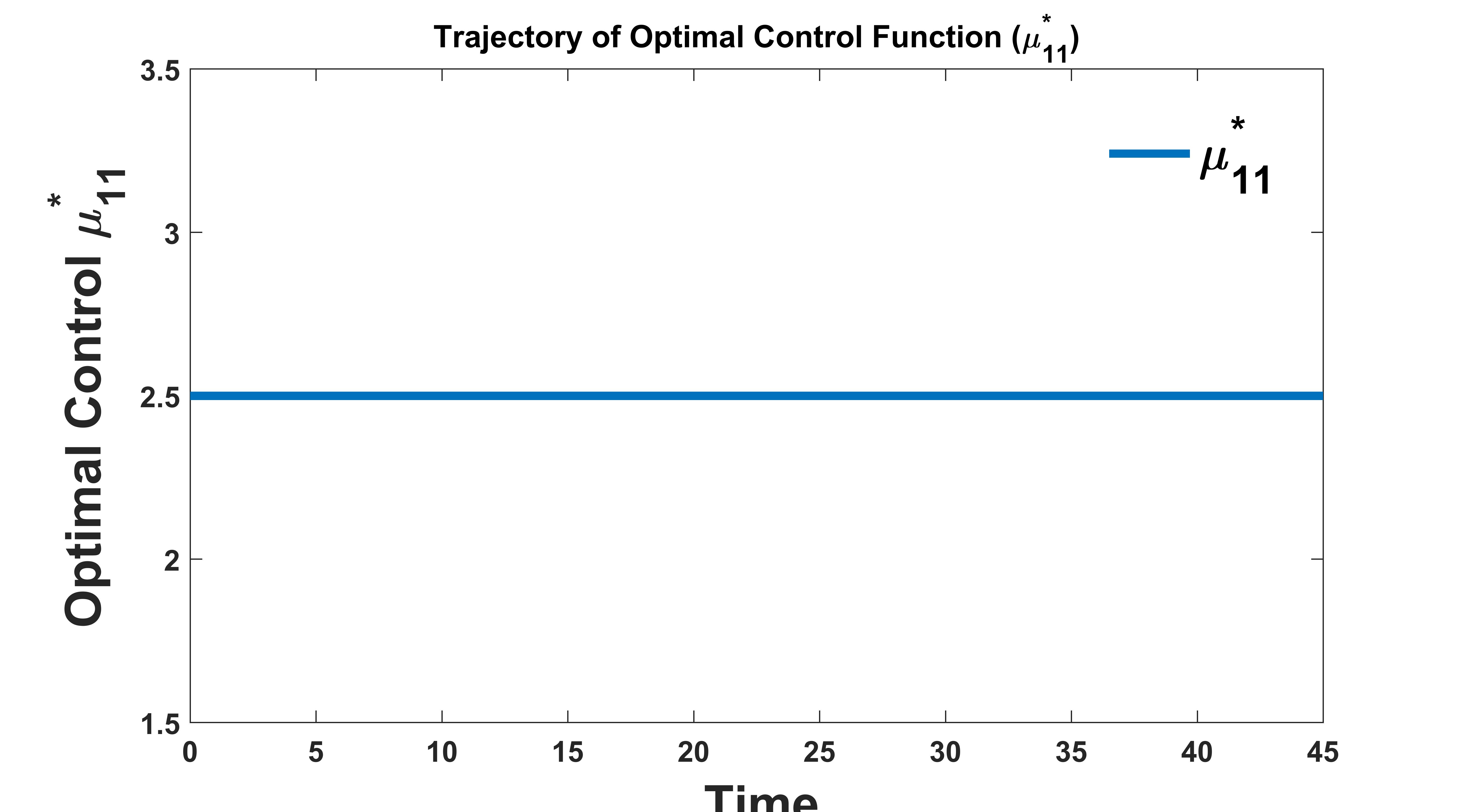}
\hspace{-.395cm}
\includegraphics[width=3.1in, height=2.6in, angle=0]{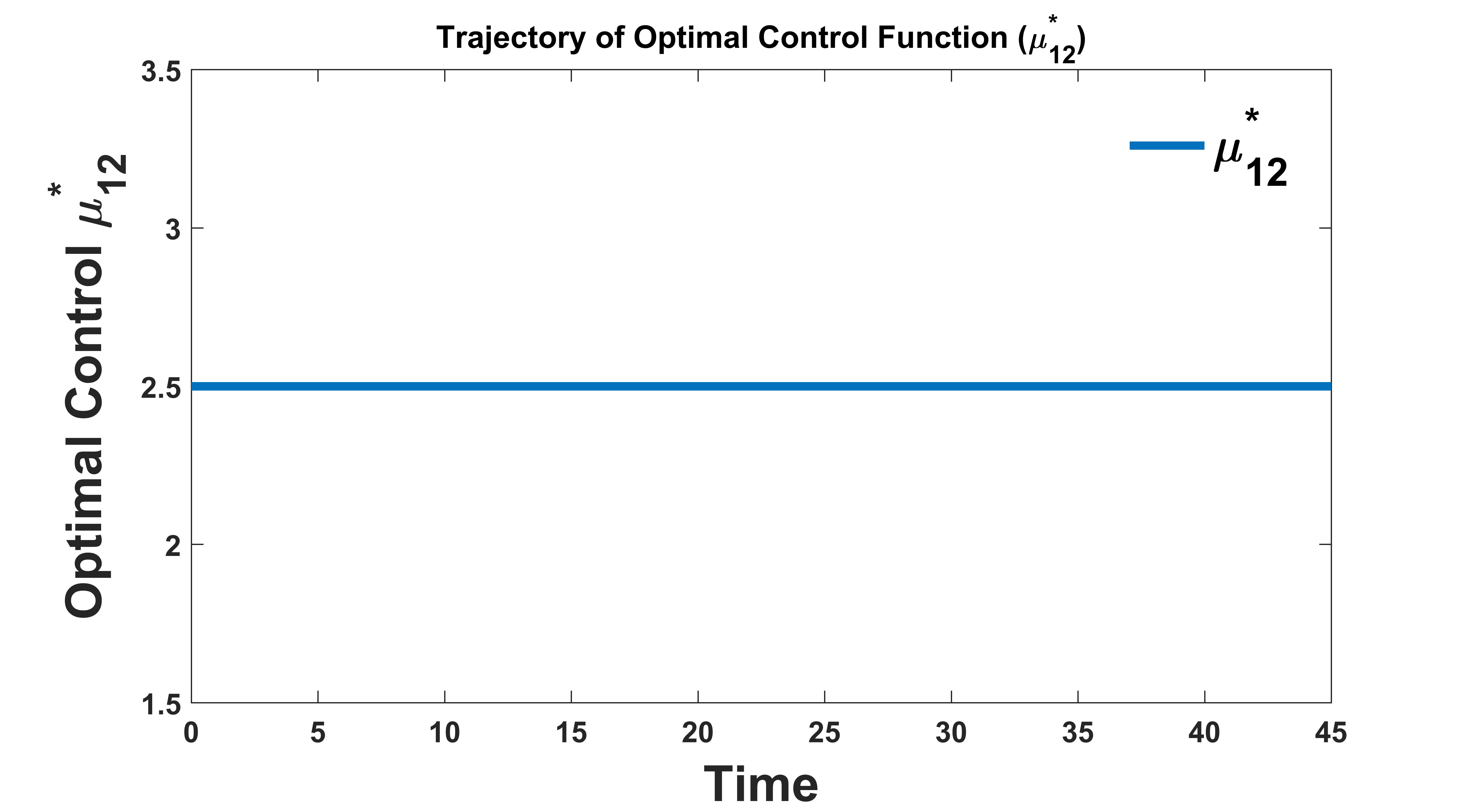}

\end{center}
\end{figure}

\vspace{-3mm}

\begin{figure}[hbt!]
\begin{center}
\includegraphics[width=3.1in, height=2.5in, angle=0]{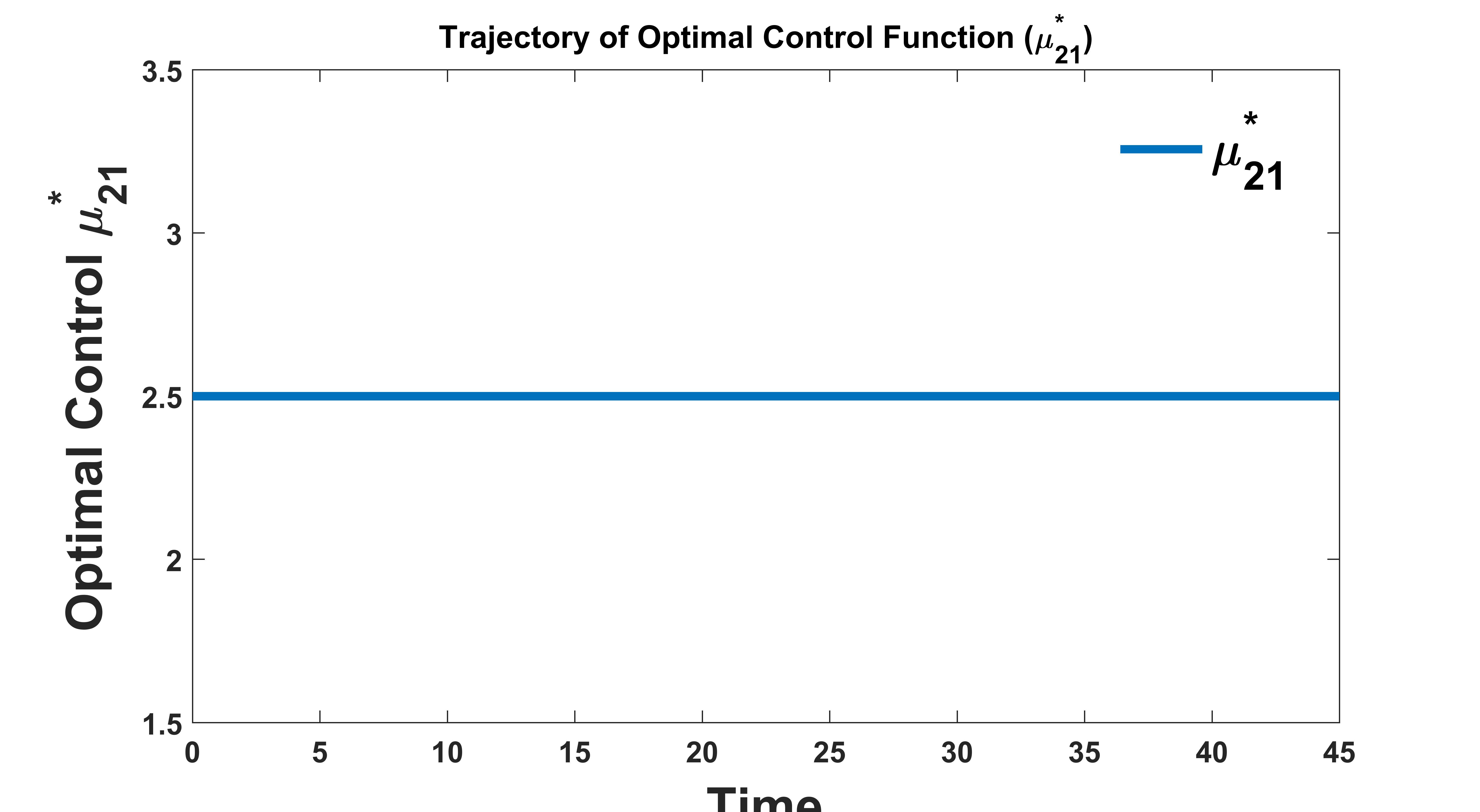}
\hspace{-.4cm}
\includegraphics[width=3.1in, height=2.5in, angle=0]{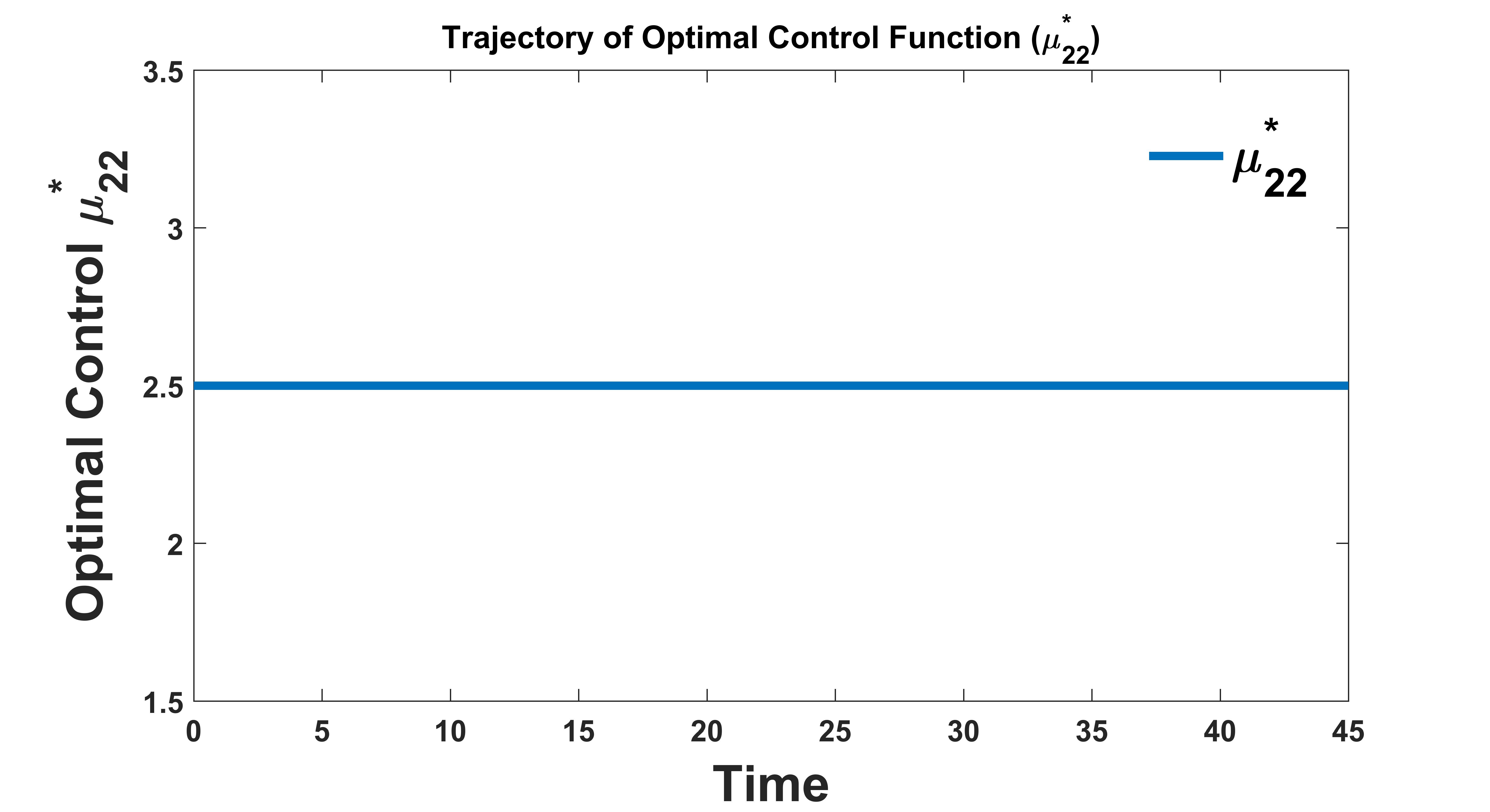}

\vspace{7mm}
\caption{Figure depicting the  trajectory of optimal control functions of the time optimal control problem (3.8)-(3.11).}
\label{1}
\end{center}
\end{figure}

\subsection{Discussions and Conclusions}
\noindent
 The outbreak of novel coronavirus in Wuhan, China marked the introduction of a virulent coronavirus into human society. It has resulted in around 154 million cases and  3.22 million deaths worldwide. With several research teams working together several mathematical as well as clinical works are being done to better understand the disease dynamics and examine the efficacies of various interventions. Although a lot of research is being done, effective approaches to treatment and epidemiological control are still lacking.

Since  COVID-19 outbreak, researchers have suggested many agents that could have potential efficacy against COVID-19.  There is no specific, effective treatment or cure for coronavirus disease 2019 (COVID-19) like  SARS-CoV and MERS-CoV. In these situation certain antiviral drugs like  chloroquine, hydroxychloroquine, lopinavir/ritonavir, and remdesivir  and certain immunomodulators such as INF and Zinc supplements are being used in the treatment. Mathematical models are known to provide useful information in short period of time and more importantly in the non-invasive way. Therefore, in this context, within-host mathematical modelling studies can be extremely helpful in understanding the natural history of this new disease, role and efficacies of the antiviral drugs (remdesivir, hydroxychloroquine etc.) and second line drugs (methylprednisolone ) individually and in combination. A SAIU compartmental mathematical model that explains the transmission dynamics of COVID-19 is developed in \cite{samui2020mathematical}.  The role of some of the control policies such as treatment, quarantine, isolation, screening, etc. are also applied to control the spread of infectious diseases \cite{kkdjou2020optimal,libotte2020determination,aronna2020model}.

In this study initially, we extended the work done in \cite{chhetri2020within} by incorporating inter-cellular time delay and studied the stability analysis of the equilibrium points. Secondly, to study the role and efficacies of the first and second line drugs an optimal control problem was framed. Fillipov existence theorem and Pontryagin Minimum Principle was used for proving the existence and obtaining the optimal solutions. Lastly, a time optimal control problem was formulated with the objective of driving the system from any given initial state to the desired infection free equilibrium state in minimal time. Using Pontryagin Minimum Principle the optimal controls were shown to be of bang-bang type with possibility of switches occurring in the optimal trajectory.

Findings from the stability analysis of the equilibrium points suggested that the infection free equilibrium point denoted by $E_0$
remained asymptotically stable for all the values of inter-cellular delay ($\tau$) provided the value of basic reproduction number $R_0$ was less than unity. As the value of $R_0$ crossed unity the infected equilibrium point $E_1$ was found to be asymptotically stable for all values of inter-cellular delay as discussed in theorem 2.3.

From the  comparative study done in the optimal control problem section 3.1, we find that when the first line antiviral drugs starts showing adverse events $(\alpha >0)$ , considering first line antiviral drugs in reduced quantity along with the second line drug could be highly effective  in reducing the infected cells and viral load in a COVID infected patients and this alternative also proved to be cost effective compared to the first line drug only case (figure (4, 5, 6)).  

In subsection 3.2 time optimal control problem was framed with the objective of driving the system from any given initial state to the desired infection free equilibrium state $E_0$ in minimum time with varying values of the first line and second line drugs. Using Pontryagin's Minimum Principle it was shown that the optimal strategy is of bang-bang time with possibility of switches between two extreme values of the optimal controls depending on the sign of switch functions. The findings from the time optimal control study indicates that with higher values of first line and second line drugs the time to reach the desired infection free state decreases (numerical illustration of theorem 3.5). This would imply that  the infected cells and viral load in the body of a  COVID infected individual becomes zero in short period of time with the higher values of the first line and second line drugs.

\quad The within-host optimal control studies to evaluate the efficacies of the anti viral drugs  without considering the adverse events of these drugs has been done in \cite{chhetri2020within} but within-host time optimal control studies in COVID-19  is first of its kind studied here and the results obtained from this can be helpful to researchers, epidemiologists, clinicians and doctors working in this field.

   \bibliographystyle{amsplain}
\bibliography{references}

\providecommand{\bysame}{\leavevmode\hbox to3em{\hrulefill}\thinspace}
\providecommand{\MR}{\relax\ifhmode\unskip\space\fi MR }
\providecommand{\MRhref}[2]{%
  \href{http://www.ams.org/mathscinet-getitem?mr=#1}{#2}
}
\providecommand{\href}[2]{#2}
\begin{thebibliography}{10}

\bibitem{mm}
\emph{https://www.covid19treatmentguidelines.nih.gov/}.

\bibitem{concern}
\emph{https://www.who.int/publications/m/item/covid-19-public-health-emergency-of-international-concern-(pheic)-global-research-and-innovation-forum}.

\bibitem{araz2021analysis}
Seda~{\.I}{\u{g}}ret Araz, \emph{Analysis of a covid-19 model: optimal control,
  stability and simulations}, Alexandria Engineering Journal \textbf{60}
  (2021), no.~1, 647--658.

\bibitem{aronna2020model}
M~Soledad Aronna, Roberto Guglielmi, and Lucas~M Moschen, \emph{A model for
  covid-19 with isolation, quarantine and testing as control measures}, arXiv
  preprint arXiv:2005.07661 (2020).

\bibitem{bar2020science}
Yinon~M Bar-On, Avi Flamholz, Rob Phillips, and Ron Milo, \emph{Science forum:
  Sars-cov-2 (covid-19) by the numbers}, Elife \textbf{9} (2020), e57309.

\bibitem{biswas2020covid}
Sudhanshu~Kumar Biswas, Jayanta~Kumar Ghosh, Susmita Sarkar, and Uttam Ghosh,
  \emph{Covid-19 pandemic in india: a mathematical model study}, Nonlinear
  dynamics \textbf{102} (2020), no.~1, 537--553.

\bibitem{bolzoni2017time}
Luca Bolzoni, Elena Bonacini, Cinzia Soresina, and Maria Groppi,
  \emph{Time-optimal control strategies in sir epidemic models}, Mathematical
  biosciences \textbf{292} (2017), 86--96.

\bibitem{chen2020mathematical}
Tian-Mu Chen, Jia Rui, Qiu-Peng Wang, Ze-Yu Zhao, Jing-An Cui, and Ling Yin,
  \emph{A mathematical model for simulating the phase-based transmissibility of
  a novel coronavirus}, Infectious diseases of poverty \textbf{9} (2020),
  no.~1, 1--8.

\bibitem{chhetri2020within}
Bishal Chhetri, Vijay~M Bhagat, DKK Vamsi, VS~Ananth, Bhanu Prakash, Roshan
  Mandale, Swapna Muthusamy, and Carani~B Sanjeevi, \emph{Within-host
  mathematical modeling on crucial inflammatory mediators and drug
  interventions in covid-19 identifies combination therapy to be most effective
  and optimal}, Alexandria Engineering Journal (2020).

\bibitem{chhetri2020optimal}
Bishal Chhetri, DK~Vamsi, and Carani~B Sanjeevi, \emph{Optimal control studies
  on age structural modeling of covid-19 in presence of saturated medical
  treatment of holling type iii}, arXiv preprint arXiv:2012.06719 (2020).

\bibitem{conticini2020high}
Edoardo Conticini, Federico Franchi, David Bennett, Serafina Valente, Maria~A
  Mazzei, Elena Bargagli, Luca Volterrani, Sabino Scolletta, and Bruno
  Frediani, \emph{High dosage of methylprednisolone as a rescue, second-line
  treatment in covid-19 patients who failed to respond to tocilizumab}, Annals
  of the rheumatic diseases (2020).

\bibitem{dashtbali2021compartmental}
Mohammadali Dashtbali and Mehdi Mirzaie, \emph{A compartmental model that
  predicts the effect of social distancing and vaccination on controlling
  covid-19}, Scientific Reports \textbf{11} (2021), no.~1, 1--11.

\bibitem{dhaiban2021optimal}
Ali~Khaleel Dhaiban and Baydaa~Khalaf Jabbar, \emph{An optimal control model of
  covid-19 pandemic: a comparative study of five countries}, OPSEARCH (2021),
  1--20.

\bibitem{diekmann2010construction}
Odo Diekmann, JAP Heesterbeek, and Michael~G Roberts, \emph{The construction of
  next-generation matrices for compartmental epidemic models}, Journal of the
  Royal Society Interface \textbf{7} (2010), no.~47, 873--885.

\bibitem{dieudonne2011foundations}
Jean Dieudonn{\'e}, \emph{Foundations of modern analysis}, Read Books Ltd,
  2011.

\bibitem{kkdjou2020optimal}
Ramses Djidjou-Demasse, Yannis Michalakis, Marc Choisy, Micea~T Sofonea, and
  Samuel Alizon, \emph{Optimal covid-19 epidemic control until vaccine
  deployment}, medRxiv (2020).

\bibitem{Wendell}
Wendell~H Fleming and Raymond~W Rishel, \emph{Deterministic and stochastic
  optimal control}, vol.~1, Springer Science \& Business Media, 2012.

\bibitem{gollmann2009optimal}
Laurenz G{\"o}llmann, Daniela Kern, and Helmut Maurer, \emph{Optimal control
  problems with delays in state and control variables subject to mixed
  control--state constraints}, Optimal Control Applications and Methods
  \textbf{30} (2009), no.~4, 341--365.

\bibitem{grein2020compassionate}
Jonathan Grein, Norio Ohmagari, Daniel Shin, George Diaz, Erika Asperges,
  Antonella Castagna, Torsten Feldt, Gary Green, Margaret~L Green,
  Fran{\c{c}}ois-Xavier Lescure, et~al., \emph{Compassionate use of remdesivir
  for patients with severe covid-19}, New England Journal of Medicine
  \textbf{382} (2020), no.~24, 2327--2336.

\bibitem{recovery2021dexamethasone}
RECOVERY~Collaborative Group, \emph{Dexamethasone in hospitalized patients with
  covid-19}, New England Journal of Medicine \textbf{384} (2021), no.~8,
  693--704.

\bibitem{hattaf2012optimal}
Khalid Hattaf and Noura Yousfi, \emph{Optimal control of a delayed hiv
  infection model with immune response using an efficient numerical method},
  International Scholarly Research Notices \textbf{2012} (2012).

\bibitem{jiang2007optimal}
C~Jiang, \emph{Optimal control of sars epidemics based on cybernetics},
  International journal of systems science \textbf{38} (2007), no.~6, 451--457.

\bibitem{Joshi2002}
Hem~Raj Joshi, \emph{Optimal control of an hiv immunology model}, Optimal
  control applications and methods \textbf{23} (2002), no.~4, 199--213.

\bibitem{leontitsis2021seahir}
Alexandros Leontitsis, Abiola Senok, Alawi Alsheikh-Ali, Younus Al~Nasser, Tom
  Loney, and Aamena Alshamsi, \emph{Seahir: A specialized compartmental model
  for covid-19}, International Journal of Environmental Research and Public
  Health \textbf{18} (2021), no.~5, 2667.

\bibitem{libotte2020determination}
Gustavo~Barbosa Libotte, Fran~S{\'e}rgio Lobato, Gustavo~Mendes Platt, and
  Ant{\^o}nio J~Silva Neto, \emph{Determination of an optimal control strategy
  for vaccine administration in covid-19 pandemic treatment}, Computer methods
  and programs in biomedicine \textbf{196} (2020), 105664.

\bibitem{makarov2013picard}
Evgeny Makarov and Bas Spitters, \emph{The picard algorithm for ordinary
  differential equations in coq}, International Conference on Interactive
  Theorem Proving, Springer, 2013, pp.~463--468.

\bibitem{mishra2017micro}
Arti Mishra and Sunita Gakkhar, \emph{A micro-epidemic model for primary dengue
  infection}, Communications in Nonlinear Science and Numerical Simulation
  \textbf{47} (2017), 426--437.

\bibitem{ndairou2020mathematical}
Fa{\"\i}{\c{c}}al Nda{\"\i}rou, Iv{\'a}n Area, Juan~J Nieto, and Delfim~FM
  Torres, \emph{Mathematical modeling of covid-19 transmission dynamics with a
  case study of wuhan}, Chaos, Solitons \& Fractals \textbf{135} (2020),
  109846.

\bibitem{ndondo2021analysis}
AM~Ndondo, SK~Kasereka, SF~Bisuta, K~Kyamakya, EFG Doungmo, and RB~M Ngoie,
  \emph{Analysis, modeling and optimal control of covid-19 outbreak with three
  forms of infection in democratic republic of the congo}, Results in Physics
  \textbf{24} (2021), 104096.

\bibitem{pawelek2019correction}
Kasia~A Pawelek, Daniel Dor~Jr, Cristian Salmeron, and Andreas Handel,
  \emph{Correction: Within-host models of high and low pathogenic influenza
  virus infections: The role of macrophages}, PloS one \textbf{14} (2019),
  no.~4, e0215700.

\bibitem{perelson1999mathematical}
Alan~S Perelson and Patrick~W Nelson, \emph{Mathematical analysis of hiv-1
  dynamics in vivo}, SIAM review \textbf{41} (1999), no.~1, 3--44.

\bibitem{pierre1986optimization}
Donald~A Pierre, \emph{Optimization theory with applications}, Courier
  Corporation, 1986.

\bibitem{pontryagin2018mathematical}
Lev~Semenovich Pontryagin, \emph{Mathematical theory of optimal processes},
  Routledge, 2018.

\bibitem{samui2020mathematical}
Piu Samui, Jayanta Mondal, and Subhas Khajanchi, \emph{A mathematical model for
  covid-19 transmission dynamics with a case study of india}, Chaos, Solitons
  \& Fractals \textbf{140} (2020), 110173.

\bibitem{sarkar2020modeling}
Kankan Sarkar, Subhas Khajanchi, and Juan~J Nieto, \emph{Modeling and
  forecasting the covid-19 pandemic in india}, Chaos, Solitons \& Fractals
  \textbf{139} (2020), 110049.

\bibitem{vargas2020host}
Esteban Abelardo~Hernandez Vargas and Jorge~X Velasco-Hernandez, \emph{In-host
  modelling of covid-19 kinetics in humans}, medrxiv (2020).

\bibitem{wang2020four}
Tianbing Wang, Yanqiu Wu, Johnson Yiu-Nam Lau, Yingqi Yu, Liyu Liu, Jing Li,
  Kang Zhang, Weiwei Tong, and Baoguo Jiang, \emph{A four-compartment model for
  the covid-19 infection—implications on infection kinetics, control
  measures, and lockdown exit strategies}, Precision Clinical Medicine
  \textbf{3} (2020), no.~2, 104--112.

\bibitem{zeb2020mathematical}
Anwar Zeb, Ebraheem Alzahrani, Vedat~Suat Erturk, and Gul Zaman,
  \emph{Mathematical model for coronavirus disease 2019 (covid-19) containing
  isolation class}, BioMed research international \textbf{2020} (2020).

\bibitem{zhao2020five}
Ze-Yu Zhao, Yuan-Zhao Zhu, Jing-Wen Xu, Shi-Xiong Hu, Qing-Qing Hu, Zhao Lei,
  Jia Rui, Xing-Chun Liu, Yao Wang, Meng Yang, et~al., \emph{A five-compartment
  model of age-specific transmissibility of sars-cov-2}, Infectious diseases of
  poverty \textbf{9} (2020), no.~1, 1--15.

\end{thebibliography}

\end{document}